\theoremstyle{plain}
\newtheorem{theorem}[equation]{Theorem}
\newtheorem{lemma}[equation]{Lemma}
\newtheorem{corollary}[equation]{Corollary}
\newtheorem{proposition}[equation]{Proposition}
\newtheorem{problem}[equation]{Problem}
\theoremstyle{definition}
\newtheorem{definition}[equation]{Definition}
\newtheorem{remark}[equation]{Remark}
\newtheorem{example}[equation]{Example}
\newtheorem{examples}[equation]{Examples}
\begin{document}

\title{Goodwillie Calculus}
\author{Gregory Arone and Michael Ching}

\maketitle

\tableofcontents

\numberwithin{equation}{section}
\renewcommand{\theequation}{\thesection.\arabic{equation}}

Goodwillie calculus is a method for analyzing functors that arise in topology. One may think of this theory as a categorification of the classical differential calculus of Newton and Leibnitz, and it was introduced by Tom Goodwillie in a series of foundational papers~\cite{goodwillie:1990,goodwillie:1991,goodwillie:2003}.

The starting point for the theory is the concept of an \emph{$n$-excisive} functor, which is a categorification of the notion of a polynomial function of degree $n$.  One of Goodwillie's key results says that every homotopy functor $F$ has a universal approximation by an $n$-excisive functor $P_nF$, which plays the role of the $n$-th Taylor approximation of $F$. Together, the functors $P_nF$ fit into a tower of approximations of $F$: the \emph{Taylor tower}
\[ F\longrightarrow \cdots \longrightarrow P_nF  \longrightarrow \cdots\longrightarrow P_1F \longrightarrow P_0F \]

It turns out that $1$-excisive functors are the ones that represent generalized homology theories (roughly speaking). For example, if $F=I$ is the identity functor on the category of based spaces, then $P_1I$ is the functor $P_1I(X)\simeq \Omega^\infty \Sigma^\infty X$. This functor represents stable homotopy theory in the sense that $\pi_*(P_1I(X))\cong \pi_*^s(X)$. Informally, this means that the best approximation to the homotopy groups by a generalized homology theory is given by the stable homotopy groups. The Taylor tower of the identity functor then provides a sequence of theories, satisfying higher versions of the excision axiom, that interpolate between stable and unstable homotopy.

The analogy between Goodwillie calculus and ordinary calculus reaches a surprising depth. To illustrate this, let $D_nF$ be the homotopy fiber of the map $P_nF \to P_{n-1}F$. The functors $D_nF$ are the homogeneous pieces of the Taylor tower. They are controlled by Taylor ``coefficients'' or derivatives of $F$. This means that for each $n$ there is a spectrum with an action of $\Sigma_n$ that we denote $\partial_nF$, and there is an equivalence of functors
\[ D_nF(X) \simeq \Omega^\infty \left(\partial_n F\wedge X^{\wedge n}\right)_{h\Sigma_n}. \]
Here for concreteness $F$ is a homotopy functor from the category of pointed spaces to itself; similar formulas apply for functors to and from other categories. The spectrum $\partial_n F$ plays the role of the $n$-th derivative of $F$, and the spectra $\partial_n F$ are relatively easy to calculate. There is an obvious similarity between the formula for $D_nF$ and the classical formula for the $n$-th term of the Taylor series of a function.

Of course there are differences between the classical differential calculus and Goodwillie calculus, and they are just as interesting as the similarities. One place where the analogy breaks down is in the complex ways that the homogeneous pieces can be ``added up'' to create the full Taylor tower. Homogeneous layers only determine the Taylor tower up to extensions. Theorems of Randy McCarthy, Nick Kuhn and the present authors reveal situations in which these extension problems can be understood via interesting connections to Tate spectra. Considerable simplification occurs by passing to chromatic homotopy theory, a fact that forms the basis for recent work of Gijs Heuts on the classification of unstable $v_n$-periodic homotopy theory via spectral Lie algebras.

Also unlike ordinary calculus, a crucial example is provided by the identity functor (for based spaces or, more generally, for any $\infty$-category of interest). As hinted at above, the identity functor typically has an interesting and non-trivial Taylor tower controlled by its own sequence of Taylor coefficients. For based spaces, these derivatives were first calculated by Brenda Johnson. Mark Mahowald and the first author used a detailed description of these objects to get further information about the Taylor tower of the identity in unstable $v_n$-periodic homotopy. The derivatives of the identity also play an important theoretical role in the calculus; in particular, by a result of the second author they form an operad that encodes structure possessed by the derivatives of any functor to or from a given $\infty$-category. For topological spaces, this operad is a topological analogue of the Lie operad, explaining the role of Lie algebras in Quillen's work on rational homotopy theory, and in Heuts's work mentioned above. Structures related to the Lie operad also form the basis of a classification of functors up to Taylor tower equivalence given by the present authors.

The nature of Goodwillie calculus lends itself to both computational and conceptual applications. Goodwillie originally developed the subject in order to understand more systematically certain calculations in algebraic $K$-theory, and this area remains a compelling source of specific examples. However, the deeply universal nature of these constructions gives functor calculus a crucial role in the foundations of homotopy theory, especially given the expanding role therein of higher category theory.

Indeed it seems that the calculus has not yet found its most general form. The similarities to Goodwillie calculus borne by the manifold and orthogonal ``calculi'' of Michael Weiss suggest some deeper structure that is still to be properly worked out. There are also important but not fully understood connections to manifolds and factorization homology of $E_n$-algebras, and a properly equivariant version of Goodwillie calculus has been hinted at by work of Dotto, but much remains to be explored.

Notwithstanding such future developments, the fundamental role of Goodwillie calculus in homotopy theory is as clear as that of ordinary calculus in other areas of mathematics: it provides a systematic interpolation between the linear (homotopy-theoretically, this usually means the stable) and nonlinear (or unstable) worlds, and thus brings our deep intuition of the nature of change to bear on our understanding of homotopy theory.

\subsection*{Acknowledgements}

The first author was partially supported by the Swedish Research Council (grant number 2016-05440), and the second author was partially supported by the National Science Foundation (grant DMS-1709032) and the Isaac Newton Institute (EPSRC grant number EP/R014604/1 and the HHH programme), during the writing of this article. Useful feedback and corrections to an earlier draft were provided by Gijs Heuts and Haynes Miller.

\section{Polynomial Approximation and the Taylor Tower}

Goodwillie's calculus of functors is modelled after ordinary differential calculus with the role of smooth maps between manifolds played by \emph{homotopy functors} $F: \mathscr{C} \to \mathscr{D}$, i.e. those that preserve some notion of weak equivalence. In Goodwillie's original formulation \cite{goodwillie:1990}, the categories $\mathscr{C}$ and $\mathscr{D}$ were each taken to be some category of topological spaces or spectra, but modern higher-category-theoretic technology allows the theory to be developed for functors $F$ between any $(\infty,1)$-categories that are suitably well-behaved.

The basic tenets of the theory are independent of any particular model for $(\infty,1)$-categories. In this paper, we will mostly use the language of $\infty$-categories (i.e. quasicategories) from \cite{lurie:2009}, and the details of Goodwillie calculus have been developed in the greatest generality by Lurie in that context, see \cite[Sec. 6]{lurie:2017}. Thus our typical assumption will be that $F: \mathscr{C} \to \mathscr{D}$ is a functor between $\infty$-categories. The reader can equally well, however, view $F$ as a functor between model categories that preserves weak equivalences. In that setting, many of the basic constructions are described by Kuhn in \cite{kuhn:2007}.

We will make considerable use of the notions of (homotopy) limit and colimit inside an $\infty$-category. We will refer to these simply as limits and colimits, though everywhere in this paper the appropriately homotopy-invariant concepts are intended. When working with a functor $F: \mathscr{C} \to \mathscr{D}$, we will usually require that $\mathscr{C}$ and $\mathscr{D}$ admit limits and colimits of particular shapes and that (especially in $\mathscr{D}$) certain limits and colimits commute. The relevant conditions will be made explicit when necessary.

\subsection*{Polynomial functors in the homotopy calculus}

To some extent, the theory of the calculus of functors is completely determined by making a choice as to which functors $F: \mathscr{C} \to \mathscr{D}$ are to be considered the analogues of degree $n$ polynomials. In Goodwillie's version, this choice is described in terms of cubical diagrams.

\begin{definition} \label{def:cubical}
An \emph{$n$-cube}\index{cubical diagram} in an $\infty$-category $\mathscr{C}$ is a functor $\mathscr{X}: \mathcal{P}(I) \to \mathscr{C}$, where $\mathcal{P}(I)$ is the poset of subsets of some finite set $I$ of cardinality $n$. An $n$-cube $\mathscr{X}$ is \emph{cartesian}\index{cubical diagram!cartesian} if the canonical map
\[ \mathscr{X}(\emptyset) \to \operatorname{holim}_{\emptyset \neq S \subseteq I} \mathscr{X}(S) \]
is an equivalence, and \emph{cocartesian}\index{cubical diagram!cocartesian} if
\[ \operatorname{hocolim}_{S \subsetneq I} \mathscr{X}(S) \to \mathscr{X}(I) \]
is an equivalence. When $n = 2$, these conditions reduce to the familiar notions of pullback and pushout, respectively. We also say that an $n$-cube $\mathscr{X}$ is \emph{strongly cocartesian}\index{cubical diagram!strongly cocartesian} if every $2$-dimensional face is a pushout. Note that a strongly cocartesian $n$-cube is also cocartesian if $n \geq 2$.
\end{definition}

We can now give Goodwillie's condition on a functor that plays the role of ``polynomial of degree $\leq n$''.

\begin{definition} \label{def:exc}
Let $\mathscr{C}$ be an $\infty$-category that admits pushouts. A functor $F: \mathscr{C} \to \mathscr{D}$ is \emph{$n$-excisive}\index{excisive functor} if it takes every strongly cocartesian $(n+1)$-cube in $\mathscr{C}$ to a cartesian $(n+1)$-cube in $\mathscr{D}$. We will say that $F$ is \emph{polynomial} if it is $n$-excisive for some integer $n$.

Let $\operatorname{Fun}(\mathscr{C},\mathscr{D})$ be the $\infty$-category of functors from $\mathscr{C}$ to $\mathscr{D}$, and let $\operatorname{Exc}_n(\mathscr{C},\mathscr{D})$ denote the full subcategory whose objects are the $n$-excisive functors.
\end{definition}

\begin{example}
In the somewhat degenerate case $n = 0$, Definition~\ref{def:exc} reduces to the statement that $F$ is $0$-excisive if and only if it is homotopically constant, i.e. $F$ takes every morphism in $\mathscr{C}$ to an equivalence in $\mathscr{D}$. (In an even more degenerate case, $F$ is $(-1)$-excisive if and only if $F(X)$ is a terminal object of $\mathscr{D}$ for all $X$ in $\mathscr{C}$.)
\end{example}

\begin{example} \label{ex:1-exc}
A functor $F: \mathscr{C} \to \mathscr{D}$ is $1$-excisive if and only if it takes pushout squares in $\mathscr{C}$ to pullback squares in $\mathscr{D}$. The prototypical example of such a functor (when $\mathscr{C}$ and $\mathscr{D}$ are both the category $\mathscr{T}op_*$ of based topological spaces) is
\[ X \mapsto \Omega^\infty(E \wedge X) \]
where $E$ is some spectrum. In fact, these examples constitute a classification of those functors that are $1$-excisive, \emph{reduced} (i.e. preserve the null object) and \emph{finitary} (i.e. preserve filtered colimits). This fact illustrates the key role played by stable homotopy theory in Goodwillie calculus.
\end{example}

\begin{remark}
It is notable that the identity functor $I: \mathscr{C} \to \mathscr{C}$ is typically \emph{not} $1$-excisive (or $n$-excisive for any $n$) unless $\mathscr{C}$ is a stable $\infty$-category. One might naturally think it would make more sense if a $1$-excisive functor were defined to preserve either pushouts or pullbacks, rather than to mix the two notions. Indeed, one can make such a definition and explore its properties. However, the notion defined by Goodwillie has turned out to be much more useful. This is partly because of the close connection to stable homotopy theory hinted at in Example~\ref{ex:1-exc}, but also because the fact that the identity functor is not $1$-excisive makes the theory \emph{more} useful rather than less, since, as we shall see, the Taylor tower of the identity functor provides an interesting decomposition of a space that we would not have if the identity were automatically linear.
\end{remark}

\begin{remark}
The property of a functor $F$ being $n$-excisive can also be described as a condition on sequences of $n+1$ morphisms in $\mathscr{C}$ with a common source, say $f_i: A \to X_i$ for $i = 0,\dots,n$. The condition relates the value of $F$ on the pushouts, over $A$, of all possible subsets of the sequence $(f_0,\dots,f_n)$. This can be viewed as the analogue of a way to specify when a function $f: \mathbb{R} \to \mathbb{R}$ is a degree $\leq n$ polynomial by considering the values of $f$ on sums of subsets of a set $(x_0,\dots,x_n)$ of real numbers.
\end{remark}

\begin{remark}
Johnson and McCarthy~\cite{johnson/mccarthy:2004} have given a different, slightly broader, definition for a functor $F: \mathscr{C} \to \mathscr{D}$ to be \emph{degree $\leq n$}: that the $(n+1)$-th cross-effect of $F$ vanishes. This choice leads to a different version of the Taylor tower described in the next section, although the difference seems not to be important in most cases of interest. In particular, for `analytic' functors, the two towers agree within the `radius of convergence'.
\end{remark}

As one might expect, the conditions of being $n$-excisive for varying $n$ are nested.

\begin{lemma} \label{lem:exc}
If $F: \mathscr{C} \to \mathscr{D}$ is $n$-excisive, then $F$ is also $n+1$-excisive. We therefore have a sequence of inclusions of subcategories
\[ \operatorname{Exc}_0(\mathscr{C},\mathscr{D}) \subseteq \operatorname{Exc}_1(\mathscr{C},\mathscr{D}) \subseteq \dots \subseteq \operatorname{Exc}_n(\mathscr{C},\mathscr{D}) \subseteq \operatorname{Exc}_{n+1}(\mathscr{C},\mathscr{D}) \subseteq \dots \]
\end{lemma}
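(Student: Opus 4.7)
Let $\mathscr{X}\colon \mathcal{P}(I)\to\mathscr{C}$ be a strongly cocartesian $(n+2)$-cube, where $|I|=n+2$; we must show $F\mathscr{X}$ is a cartesian $(n+2)$-cube in $\mathscr{D}$. The plan is to reduce to two $(n+1)$-cubes and apply the hypothesis. First I would pick any $i_0\in I$, set $J := I\setminus\{i_0\}$, and decompose $\mathscr{X}$ as a natural transformation $\alpha\colon \mathscr{X}_0\to\mathscr{X}_1$ between two $(n+1)$-cubes indexed by $\mathcal{P}(J)$, where $\mathscr{X}_0(S) := \mathscr{X}(S)$ and $\mathscr{X}_1(S) := \mathscr{X}(S\cup\{i_0\})$.

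Next I would observe that both $\mathscr{X}_0$ and $\mathscr{X}_1$ are strongly cocartesian $(n+1)$-cubes in $\mathscr{C}$: their $2$-dimensional faces are precisely $2$-dimensional faces of $\mathscr{X}$, all of which are pushouts by hypothesis. By the $n$-excisiveness of $F$, both $F\mathscr{X}_0$ and $F\mathscr{X}_1$ are therefore cartesian $(n+1)$-cubes in $\mathscr{D}$.

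The main step is to deduce that $F\mathscr{X}$ is cartesian as an $(n+2)$-cube from the fact that its two constituent $(n+1)$-cubes are. For this I would use the standard decomposition of the punctured cube $\mathcal{P}(I)\setminus\{\emptyset\}$ according to whether $i_0\in S$, which yields a natural equivalence
\[
\operatorname{holim}_{\emptyset \neq S \subseteq I} F\mathscr{X}(S) \;\simeq\; \operatorname{holim}\!\left( \operatorname{holim}_{\emptyset \neq S \subseteq J} F\mathscr{X}_0(S) \longrightarrow \operatorname{holim}_{\emptyset \neq S \subseteq J} F\mathscr{X}_1(S) \longleftarrow F\mathscr{X}_1(\emptyset) \right).
\]
Cartesianness of $F\mathscr{X}_0$ and $F\mathscr{X}_1$ identifies the two outer holims on the right with $F\mathscr{X}_0(\emptyset)$ and $F\mathscr{X}_1(\emptyset)$ respectively, and the right-hand map becomes an equivalence; the pullback therefore reduces to $F\mathscr{X}_0(\emptyset)=F\mathscr{X}(\emptyset)$, and the canonical comparison map is an equivalence.

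The main technical obstacle is verifying the displayed decomposition of the holim, which amounts to a cofinality or Fubini-type argument for limits indexed by $\mathcal{P}(I)\setminus\{\emptyset\}$. Once that is in hand, the subsequent chain of equivalences is formal. Given the lemma, the chain of inclusions in the statement follows by iterating starting from whatever base $n$ one wishes (including the degenerate $n=-1$).
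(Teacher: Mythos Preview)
The paper is a survey and states this lemma without proof, so there is no argument to compare against directly. Your proof is correct and follows the standard route (essentially Goodwillie's Proposition~1.6 in \emph{Calculus~II}): split the $(n+2)$-cube along one coordinate into a map of $(n+1)$-cubes, observe that both faces inherit the strongly cocartesian property because their $2$-faces are $2$-faces of the original cube, apply $n$-excisiveness to each face, and then use the decomposition of the punctured-cube limit to conclude. The displayed pullback identity is exactly the statement that an $(n+2)$-cube is cartesian if and only if, viewing it as a map $\mathscr{Y}_0 \to \mathscr{Y}_1$ of $(n+1)$-cubes with $\mathscr{Y}_1$ cartesian, the cube $\mathscr{Y}_0$ is cartesian; your Fubini/cofinality step is the usual proof of this fact and is entirely standard.

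One small remark: in the final reduction you write that ``the right-hand map becomes an equivalence'' after identifying the outer holims. More precisely, cartesianness of $F\mathscr{X}_1$ says the map $F\mathscr{X}_1(\emptyset) \to \operatorname{holim}_{\emptyset \neq S \subseteq J} F\mathscr{X}_1(S)$ is an equivalence, so the pullback along it is equivalent to the other leg, namely $\operatorname{holim}_{\emptyset \neq S \subseteq J} F\mathscr{X}_0(S)$, which by cartesianness of $F\mathscr{X}_0$ is $F\mathscr{X}_0(\emptyset) = F\mathscr{X}(\emptyset)$. That is exactly what you intend; just be careful that you are using cartesianness of $F\mathscr{X}_1$ for the right leg and of $F\mathscr{X}_0$ for the final identification, not the other way around.
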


\subsection*{Polynomial approximation in the homotopy calculus}

The fundamental construction in ordinary differential calculus is that of polynomial approximation: given a smooth function $f: \mathbb{R} \to \mathbb{R}$ and a real number $x$, there is a unique ``best'' degree $\leq n$ polynomial that approximates $f$ in a `neighbourhood' of $x$. To transfer this idea to the calculus of functors, we need to be able to compare the values of functors on objects in $\mathscr{C}$ that are related in some sense. In particular, we require a map between the objects in order to make this comparison. Thus the $n$-excisive approximation to a functor $F: \mathscr{C} \to \mathscr{D}$ in a neighbourhood of an object $X \in \mathscr{C}$ is only defined on objects $Y$ that come equipped with a map $Y \to X$ in $\mathscr{C}$, that is, on $Y$ in the slice $\infty$-category $\mathscr{C}_{/X}$.

\begin{definition}
We say that functors $\mathscr{C} \to \mathscr{D}$ \emph{admit $n$-excisive approximations}\index{excisive approximations} at $X$ in $\mathscr{C}$ if the composite
\[ \operatorname{Exc}_n(\mathscr{C}_{/X},\mathscr{D}) \hookrightarrow \operatorname{Fun}(\mathscr{C}_{/X},\mathscr{D}) \to \operatorname{Fun}(\mathscr{C},\mathscr{D}) \]
has a left adjoint, which, when it exists, we write $P^X_n$. Here (as everywhere in this article) we mean an adjunction in the $(\infty,1)$-categorical sense: see, for example Lurie~\cite[5.2.2.1]{lurie:2009} or Riehl-Verity~\cite[1.1]{riehl/verity:2013}.
\end{definition}

\begin{remark}
Biedermann, Chorny and R\"{o}ndigs showed in \cite{biedermann/chorny/rondigs:2007} that the $n$-excisive approximation is a left Bousfield localization of a suitable category of functors, and it provides a best approximation \emph{on the right} to a given functor $F: \mathscr{C} \to \mathscr{D}$ by one that is $n$-excisive. Explicitly, the $n$-excisive approximation to $F$ at $X$, if it exists, consists of a natural transformation
\[ F \to P^X_nF \]
of functors $\mathscr{C}_{/X} \to \mathscr{D}$ that is initial (up to homotopy) among natural transformations from $F$ (restricted to $\mathscr{C}_{/X}$) to an $n$-excisive functor.
\end{remark}

The first main theorem of Goodwillie calculus is that such $n$-excisive approximations exist under mild conditions on $\mathscr{C}$ and $\mathscr{D}$. The following result is stated by Lurie, but the proof is no different than that given originally by Goodwillie in the context of functors of topological spaces and spectra.

\begin{theorem}[{Goodwillie~\cite[1.13]{goodwillie:2003}}, {Lurie~\cite[6.1.1.10]{lurie:2017}}] \label{thm:approx}
Let $\mathscr{C}$ and $\mathscr{D}$ be $\infty$-categories, and suppose that $\mathscr{C}$ has pushouts, and that $\mathscr{D}$ has sequential colimits, and finite limits, which commute. Then functors $\mathscr{C} \to \mathscr{D}$ admit $n$-excisive approximations at any object $X \in \mathscr{C}$.
\end{theorem}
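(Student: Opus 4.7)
The plan is to construct $P^X_nF$ explicitly via Goodwillie's iterative procedure and then verify its universal property. I work throughout with the restriction of $F$ to $\mathscr{C}_{/X}$, a category that inherits pushouts from $\mathscr{C}$ and has a terminal object. As a first step, I define a one-step approximation: fix a finite set $I$ of cardinality $n+1$, and for each $Y \to X$ in $\mathscr{C}_{/X}$ form the strongly cocartesian $(n+1)$-cube $\mathcal{X}_Y: \mathcal{P}(I) \to \mathscr{C}_{/X}$ characterised by $\mathcal{X}_Y(\emptyset) = Y$, $\mathcal{X}_Y(\{i\}) = X$, with each edge $\emptyset \to \{i\}$ given by the structure morphism of $Y$. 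Set
\[ T_nF(Y) := \operatorname{holim}_{\emptyset \neq S \subseteq I} F(\mathcal{X}_Y(S)), \]
and let $t_n: F \to T_nF$ be the canonical map from the initial vertex of $F(\mathcal{X}_Y)$ into this limit.

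The central technical input is the \emph{detection lemma}: $F$ is $n$-excisive if and only if $t_n$ is an equivalence. The ``only if'' direction is immediate, since for $n$-excisive $F$ the cube $F(\mathcal{X}_Y)$ is already cartesian. The ``if'' direction is the main obstacle of the proof: one must upgrade the hypothesis that $F$ takes each canonical cube $\mathcal{X}_Y$ to a cartesian cube to the same conclusion for \emph{all} strongly cocartesian $(n+1)$-cubes. This is achieved by a combinatorial comparison: an arbitrary strongly cocartesian cube $\mathcal{Z}$ in $\mathscr{C}_{/X}$ is fitted into a larger auxiliary diagram together with cubes of the form $\mathcal{X}_W$ for various vertices $W$ of $\mathcal{Z}$, and the cartesianness of $F(\mathcal{Z})$ is then extracted from that of the $F(\mathcal{X}_W)$-type cubes sitting inside the auxiliary diagram as faces.

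Set $P_nF := \operatorname{colim}(F \to T_nF \to T_n^2F \to \cdots)$, with structure maps given by $t_n$. Because $\mathscr{D}$ has sequential colimits that commute with finite limits, $T_n$ commutes with this colimit and one computes
\[ T_n(P_nF)(Y) \simeq \operatorname{colim}_k T_n^{k+1}F(Y) \simeq P_nF(Y), \]
so that $t_n: P_nF \to T_nP_nF$ is an equivalence. The detection lemma then forces $P_nF$ to be $n$-excisive.

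For the universal property, suppose $G: \mathscr{C}_{/X} \to \mathscr{D}$ is $n$-excisive; the detection lemma gives $T_nG \simeq G$. Any $\phi: F \to G$ iterates compatibly to natural transformations $T_n^kF \to T_n^kG \simeq G$ and, passing to the colimit, yields a factorisation $P_nF \to G$ of $\phi$ through the unit $F \to P_nF$. Running the same argument on mapping spaces upgrades this to the adjunction equivalence $\operatorname{Map}(P_nF,G) \simeq \operatorname{Map}(F,G)$ required by the definition. Everything after the detection lemma is formal, but that lemma together with the commutation of sequential colimits with finite limits in $\mathscr{D}$ is essential: without the former one cannot recognise $n$-excisiveness from the iteration, and without the latter the iteration need not converge to a $T_n$-fixed point in the first place.
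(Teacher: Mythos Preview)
The paper does not supply its own proof of this theorem; it simply cites Goodwillie~\cite[1.13]{goodwillie:2003} and Lurie~\cite[6.1.1.10]{lurie:2017}. Your proposal reproduces Goodwillie's original argument: the construction of $T_n$ via the canonical strongly cocartesian cube built from the terminal map, the iteration $P_nF = \operatorname{colim}_k T_n^kF$, and the use of the commutation of sequential colimits with finite limits in $\mathscr{D}$ are exactly as in the cited sources.

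The one place your packaging differs slightly is in what you call the ``detection lemma''. Goodwillie does not isolate the statement ``$t_n$ an equivalence $\Rightarrow$ $F$ is $n$-excisive'' as such; instead his key Lemma~1.9 (Lurie's 6.1.1.28) asserts that for \emph{any} $F$ and any strongly cocartesian $(n+1)$-cube $\mathcal{Z}$, the map of cubes $t_nF(\mathcal{Z}): F(\mathcal{Z}) \to T_nF(\mathcal{Z})$ factors through a cartesian cube. From this one deduces directly that $P_nF(\mathcal{Z})$ is cartesian as a filtered colimit of cartesian cubes, without first checking that $t_n$ is an equivalence on $P_nF$. Your detection lemma follows from Goodwillie's factorisation lemma by a retract argument, and both rest on the same combinatorial analysis of the auxiliary $\mathcal{P}(I)\times\mathcal{P}(I)$-indexed diagram you sketch, so the difference is organisational rather than mathematical.
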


\begin{example}
The $0$-excisive approximation to $F$ at $X$ is, as you would expect, equivalent to the constant functor with value $F(X)$.
\end{example}

There is no loss of generality (and the notation is simpler) if we focus on the case where $X$ is a terminal object of $\mathscr{C}$. In this setting, the $n$-excisive approximation to $F: \mathscr{C} \to \mathscr{D}$ is another functor from $\mathscr{C}$ to $\mathscr{D}$, which we simply denote by $P_nF$.

\begin{example}
Goodwillie gives an explicit construction of $P_nF$ which is easiest to describe when $n = 1$ and where $\mathscr{C}$ and $\mathscr{D}$ are both pointed with $F: \mathscr{C} \to \mathscr{D}$ \emph{reduced}, i.e. preserving the null object. In this case, $P_1F(Y)$ can be written as the colimit of the following sequence of maps
\[ F(Y) \to \Omega F(\Sigma Y) \to \Omega^2 F(\Sigma^2 Y) \to \dots \]
where $\Sigma$ and $\Omega$ are the suspension and loop-space functors for $\mathscr{C}$ and $\mathscr{D}$ respectively. For $F: \mathscr{T}op_* \to \mathscr{T}op_*$, we have
\[ P_1F(Y) \simeq \Omega^\infty \bar{F} (\Sigma^\infty Y) \]
where $\bar{F}$ is a reduced $1$-excisive functor from spectra to spectra. For $Y$ a finite CW-complex, such a functor can be written in the form of Example~\ref{ex:1-exc}:
\[ P_1F(Y) \simeq \Omega^\infty(\partial_1F \wedge Y) \]
where $\partial_1F$ is a spectrum which we refer to as the \emph{(first) derivative of $F$} (at the one-point space $*$).
\end{example}

\begin{example}
The $1$-excisive approximation to the identity functor on \emph{based} spaces is simply the stable homotopy functor
\[ P_1I(Y) \simeq \Omega^\infty \Sigma^\infty Y =: Q(Y) \]
or equivalently, $\partial_1I \simeq S^0$, the sphere spectrum.

The unbased case is slightly more subtle: the $1$-excisive approximation to the identity functor in that context can be written as
\[ P_1I(Y) \simeq \operatorname{hofib}(Q(Y_+) \to Q(S^0)) =: \tilde{Q}(Y) \]
where the homotopy fibre is calculated over the point in $Q(S^0)$ corresponding to the \emph{identity} map on the sphere spectrum (as opposed to the null map).
\end{example}

\subsection*{Taylor tower and convergence}

The explicit description of $P_nF$ is hard to make use of for $n > 1$, even in the case of the identity functor. The real power of the calculus of functors derives from the tower formed by the $n$-excisive approximations for varying $n$.

\begin{definition}
The \emph{Taylor tower}\index{Taylor tower!of a functor} (or \emph{Goodwillie tower}) of $F: \mathscr{C} \to \mathscr{D}$ at $X \in \mathscr{C}$ is the sequence of natural transformations (of functors $\mathscr{C}_{/X} \to \mathscr{D}$):
\[ F \to \dots \to P^X_{n+1}F \to P^X_nF \to \dots \to P^X_1F \to P^X_0F \simeq F(X) \]
where it follows from the universal property of $P^X_{n+1}F$, and Lemma~\ref{lem:exc}, that each $F \to P^X_nF$ factors as shown.
\end{definition}

For a given map $f: Y \to X$ in $\mathscr{C}$, the Taylor tower provides a sequence of factorizations of $F(f): F(Y) \to F(X)$. Ideally, we would be able to recover the value $F(Y)$ from this sequence of approximations $P^X_nF(Y)$ in the following way.

\begin{definition}
The Taylor tower of $F: \mathscr{C} \to \mathscr{D}$ \emph{converges}\index{convergence of Taylor tower} at $Y \in \mathscr{C}_{/X}$ if the induced map
\[ F(Y) \to \operatorname{holim}_n P^X_nF(Y) \]
is an equivalence in $\mathscr{D}$.
\end{definition}

Arguably the question of convergence of the Taylor tower is the most important step in actually applying the calculus of functors to a particular functor $F$. Very general approaches to proving convergence seem rare, but Goodwillie has developed an extensive set of tools, based on connectivity estimates, in the contexts of topological spaces and spectra.

These tools are based on measuring the failure of a functor $F$ to be $n$-excisive via connectivity. This can be done by applying $F$ to a strongly cocartesian $(n+1)$-cube, and examining the failure of the resulting cube to be cartesian, in terms of the connectivity of the map from the initial vertex to the homotopy limit of the rest of the diagram.

Roughly speaking, a functor $F$ is \emph{stably $n$-excisive}\index{stably excisive} if this connectivity is controlled relative to the connectivities of the maps in the original cocartesian cube. Goodwillie then says that $F$ is \emph{$\rho$-analytic}\index{analytic functor}, for some real number $\rho$, if it is stably $n$-excisive for all $n$ where these connectivity estimates depend linearly on $n$ with slope $\rho$. See \cite{goodwillie:1991} for complete details.

The upshot of these definitions is the following theorem.

\begin{theorem}[Goodwillie~{\cite[1.13]{goodwillie:2003}}]
Let $F: \mathscr{C} \to \mathscr{D}$ be a $\rho$-analytic functor where $\mathscr{C}$ and $\mathscr{D}$ are each either spaces or spectra. Then the Taylor tower of $F$ at $X \in \mathscr{C}$ converges on those objects $Y$ in $\mathscr{C}_{/X}$ whose underlying map $Y \to X$ is $\rho$-connected.
\end{theorem}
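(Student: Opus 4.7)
The plan is to quantify the failure of $F$ to agree with $P_n^X F$ using the analyticity hypothesis, and then show this failure becomes arbitrarily highly connected as $n \to \infty$. My starting point would be Goodwillie's explicit construction of $P_n^X F$ as a sequential colimit $\operatorname{hocolim}_k T_n^k F$, where $T_n F(Y)$ is the homotopy limit of $F$ applied to the punctured strongly cocartesian $(n+1)$-cube built out of the map $Y \to X$ by iterated fiberwise join (or equivalently, iterated pushouts along $n+1$ parallel copies of $Y \to X$). This reduces the task to estimating the connectivity of the natural map $F(Y) \to T_n F(Y)$ in terms of the connectivity of $Y \to X$, and then iterating and passing to limits.

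First I would unpack the definition of $\rho$-analyticity to extract the relevant quantitative statement: there exist constants $c$ and $q$ such that whenever $\mathscr{X}$ is a strongly cocartesian $(n+1)$-cube with initial vertex $Y$ and each edge $\mathscr{X}(\emptyset) \to \mathscr{X}(\{i\})$ is $k_i$-connected with $k_i \geq \rho$, the comparison map
\[ F(\mathscr{X}(\emptyset)) \to \operatorname{holim}_{\emptyset \neq S \subseteq I} F(\mathscr{X}(S)) \]
has connectivity at least $1 - n\rho + \sum_i k_i - c$. Applied to the cube used in the construction of $T_n F$ at a $\rho$-connected map $Y \to X$, all $n+1$ edges are at least $\rho$-connected (in fact strictly more, which is where the assumption buys room), so each extra dimension $n$ contributes a positive amount to the connectivity of $F(Y) \to T_n F(Y)$. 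The same analyticity estimate applied to the iterates shows that the fibers of $T_n^k F(Y) \to T_n^{k+1} F(Y)$ grow in connectivity with $k$, so the sequential colimit $F(Y) \to P_n^X F(Y)$ inherits a connectivity bound that still grows linearly in $n$.

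With this estimate in hand, the convergence conclusion follows formally: the fiber of $F(Y) \to P_n^X F(Y)$ has connectivity tending to infinity with $n$, so in each fixed degree the tower $\{P_n^X F(Y)\}$ is eventually constant (up to higher connectivity) on the homotopy groups of $F(Y)$, and the Milnor $\lim^1$ obstructions vanish in the limit. Hence $F(Y) \to \operatorname{holim}_n P_n^X F(Y)$ is an equivalence.

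The main obstacle will be the careful bookkeeping in the second step: one must verify that the cubes produced by iterating $T_n$ really do consist of edges all of connectivity at least $\rho$, that these connectivities improve under iteration in the way required, and that the slack in the hypothesis ``$Y \to X$ is $\rho$-connected'' (rather than merely $\geq \rho - 1$) is what converts the borderline analyticity estimate into a strictly positive gain per step. This is the content of Goodwillie's higher Blakers–Massey theorem and its inductive applications in \cite{goodwillie:1991}, and it is precisely the point at which the restriction to categories of spaces or spectra is essential, since these are the settings in which the requisite connectivity calculus is available.
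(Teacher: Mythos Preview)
The paper does not actually prove this theorem: it is a survey, and the statement is simply quoted with a citation to Goodwillie~\cite[1.13]{goodwillie:2003} and immediately followed by examples. There is no argument in the paper to compare your proposal against.

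That said, your outline is a faithful sketch of Goodwillie's original proof: the explicit description of $P_n^X F$ as $\operatorname{hocolim}_k T_n^k F$, the connectivity estimate on $F(Y) \to T_nF(Y)$ coming from the stably-$n$-excisive condition, the observation that iterating $T_n$ raises the connectivities of the edges of the relevant cube so that the estimates improve, and the passage to the limit, are exactly the steps in \cite{goodwillie:2003}. One small correction: the higher Blakers--Massey theorems from \cite{goodwillie:1991} are what establish that specific functors (such as the identity on spaces) are $\rho$-analytic; they are not needed for the convergence theorem itself, which is a formal consequence of the analyticity hypothesis and the construction of $P_n$. The role of the restriction to spaces or spectra is simply that these are the categories in which Goodwillie formulated the connectivity-based notion of analyticity.
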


\begin{examples}
The identity functor $I: \mathscr{T}op \to \mathscr{T}op$ is $1$-analytic \cite[4.3]{goodwillie:1991}. This depends on higher dimensional versions of the Blakers-Massey Theorem relating pushouts and pullbacks in $\mathscr{T}op$. Indeed the usual Blakers-Massey Theorem implies that $I$ is stably $1$-excisive. Waldhausen's algebraic $K$-theory of spaces functor $A: \mathscr{T}op \to {\mathscr{S}p}$ is also $1$-analytic \cite[4.6]{goodwillie:1991}. In particular, the Taylor towers at $*$ of both of these functors converge on simply-connected spaces.
\end{examples}

\begin{definition} \label{def:gss}
Let $F: \mathscr{C} \to \mathscr{T}op_*$ be a homotopy functor with values in pointed spaces. The \emph{Goodwillie spectral sequence} associated to $F$ at $Y \in \mathscr{C}_{/X}$ is the homotopy spectral sequence of the tower of pointed spaces $(P^X_nF(Y))_{n \geq 1}$~\cite[section IX.4]{bousfield/kan:1972}. This spectral sequence converges to
\[ \pi_*P^X_{\infty}F(Y) \]
where $P^X_{\infty}F := \operatorname{holim}_n P^X_nF$, and has $E^1$-term given by the homotopy groups of the \emph{layers} of the Taylor tower, i.e.
\[ E^1_{s,t} \cong \pi_{t-s}D^X_sF(Y) \]
where $D^X_sF := \operatorname{hofib}(P^X_sF \to P^X_{s-1}F)$. If $F$ is analytic (and $Y \to X$ is suitably connected), then this spectral sequence converges strongly~\cite[section IX.5]{bousfield/kan:1972}.
\end{definition}

\begin{example} \label{ex:gss-id}
For the identity functor on based spaces, the Goodwillie spectral sequence at $Y \in \mathscr{T}op_*$ takes the form
\[ E^1_{s,t} = \pi_{t-s}D_sI(Y) \]
and converges to the homotopy groups of $Y$ when $Y$ is simply-connected (or, more generally, when the Taylor tower of the identity converges for $Y$. This spectral sequence has been studied extensively by Behrens~\cite{behrens:2012}. The spectral sequence also motivates the study of the layers $D^X_nF$ of the Taylor tower in general, and we turn to these now.
\end{example}

\section{The Classification of Homogeneous Functors}

Let $F: \mathscr{C} \to \mathscr{D}$ be a homotopy functor, where $\mathscr{C}$ and $\mathscr{D}$ are as in Theorem~\ref{thm:approx}, and suppose further that $\mathscr{D}$ is a \emph{pointed} $\infty$-category. Then we make the following definition, generalizing that given at the end of the previous section.

\begin{definition}
The $n$-th \emph{layer}\index{layer of the Taylor tower} of the Taylor tower of $F$ at $X$ is the functor $D^X_nF: \mathscr{C}_{/X} \to \mathscr{D}$ given by
\[ D^X_nF(Y) := \operatorname{hofib}(P^X_nF(Y) \to P^X_{n-1}F(Y)). \]
\end{definition}

These layers play the role of \emph{homogeneous} polynomials in the theory of calculus, and satisfy the following definition.

\begin{definition}
Let $F: \mathscr{C} \to \mathscr{D}$ be a homotopy functor that admits $n$-excisive approximations, and where $\mathscr{D}$ has a terminal object $*$. We say that $F$ is \emph{$n$-homogeneous}\index{homogeneous functor} if $F$ is $n$-excisive and $P_{n-1}F \simeq *$.
\end{definition}

Another of Goodwillie's main theorems from \cite{goodwillie:2003} is the existence of a natural delooping, and consequently a classification, of homogeneous functors. To state this in the generality of $\infty$-categories, we first recall that any suitable pointed $\infty$-category $\mathscr{C}$ admits a \emph{stabilization}, that is a stable $\infty$-category ${\mathscr{S}p}(\mathscr{C})$ together with an adjunction
\[ \Sigma^\infty_{\mathscr{C}} : \mathscr{C} \rightleftarrows {\mathscr{S}p}(\mathscr{C}) : \Omega^\infty_{\mathscr{C}} \]
that generalizes the suspension spectrum / infinite-loop space adjunction, which we write simply as $(\Sigma^\infty,\Omega^\infty)$, for $\mathscr{C} = \mathscr{T}op_*$.

\begin{theorem} \label{thm:hom}
Let $F: \mathscr{C} \to \mathscr{D}$ be an $n$-homogeneous functor between pointed $\infty$-categories. Then there is a \emph{symmetric multilinear} functor $H: {\mathscr{S}p}(\mathscr{C})^n \to {\mathscr{S}p}(\mathscr{D})$, and a natural equivalence
\[ F(X) \simeq \Omega^\infty_{\mathscr{D}} \left[H(\Sigma^\infty_{\mathscr{C}}X, \dots, \Sigma^\infty_{\mathscr{C}}X)_{h\Sigma_n}\right] \]
where we are taking the homotopy orbit construction with respect to the action of the symmetric group $\Sigma_n$ that permutes the entries of $H$.
\end{theorem}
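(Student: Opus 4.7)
The plan is to construct $H$ as the stabilization of the $n$-th cross-effect of $F$, and then to recover $F$ from $H$ via a diagonal-plus-orbits construction. Start by defining
\[ \operatorname{cr}_n F(X_1,\dots,X_n) := \operatorname{tfib}\bigl(S \mapsto F(\textstyle\bigvee_{i\in S} X_i)\bigr), \]
the total fibre in $\mathscr{D}$ of the strongly cocartesian $n$-cube of wedges (available because $\mathscr{C}$ is pointed with pushouts). Permuting inputs gives this a natural $\Sigma_n$-action.

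Next, I would verify that for $n$-homogeneous $F$ the functor $\operatorname{cr}_n F$ is \emph{symmetric multilinear}, i.e.\ reduced and $1$-excisive in each variable separately. Reducedness is forced by $P_{n-1}F \simeq *$, which implies $F$ itself is reduced. For $1$-excisiveness in the $i$-th slot, combine a pushout square of candidate $X_i$'s with the wedges of the remaining fixed $X_j$'s to build a strongly cocartesian $(n+1)$-cube; the hypothesis that $F$ is $n$-excisive makes the image cartesian, and re-reading in terms of total fibres translates this into a pullback square in $\operatorname{cr}_n F$. The universal property of stabilization -- a reduced $1$-excisive functor between pointed $\infty$-categories factors essentially uniquely through $\Sigma^\infty$ on the source and $\Omega^\infty$ on the target -- then applies one variable at a time to produce a symmetric multilinear functor $H\colon {\mathscr{S}p}(\mathscr{C})^n \to {\mathscr{S}p}(\mathscr{D})$ with
\[ \operatorname{cr}_n F(X_1,\dots,X_n) \simeq \Omega^\infty_{\mathscr{D}} H(\Sigma^\infty_{\mathscr{C}} X_1,\dots,\Sigma^\infty_{\mathscr{C}} X_n) \]
equivariantly for $\Sigma_n$.

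The main obstacle is the reconstruction equivalence $F(X) \simeq \Omega^\infty_{\mathscr{D}}[H(\Sigma^\infty_{\mathscr{C}} X,\dots,\Sigma^\infty_{\mathscr{C}} X)_{h\Sigma_n}]$. My plan is to call the right-hand side $G(X)$, show that $G$ is $n$-homogeneous, and exhibit a natural equivalence $G \simeq F$. For $n$-homogeneity of $G$: a symmetric multilinear functor of spectra is $n$-excisive on the diagonal (verified by $1$-excisiveness in each variable) and $n$-reduced, and these properties survive homotopy $\Sigma_n$-orbits followed by $\Omega^\infty_{\mathscr{D}}$ because these operations preserve fibre sequences and pullbacks of the relevant shape. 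For the comparison, I would use the fold map $X \vee \cdots \vee X \to X$ to produce a $\Sigma_n$-invariant natural transformation $\operatorname{cr}_n F(X,\dots,X) \to F(X)$, factor it through the homotopy orbits, and adjoint across $(\Sigma^\infty_{\mathscr{D}},\Omega^\infty_{\mathscr{D}})$ to obtain a map $G \to F$. The delicate step is to show this candidate is an equivalence; the standard route is a uniqueness principle, namely that the cross-effect construction $F \mapsto \operatorname{cr}_n F$ is fully faithful on $n$-homogeneous functors (at the level of their stabilizations), which reduces the question to checking that the $n$-th cross-effect of $G$ recovers $H$. This final computation -- that the off-diagonal terms of $S \mapsto H(\bigvee_{i\in S}X,\dots,\bigvee_{i\in S}X)_{h\Sigma_n}$ split multilinearly and contribute nothing to the total fibre apart from the main diagonal summand -- is where symmetry and the homotopy-orbit normalisation are both essential, and constitutes the technical heart of the theorem.
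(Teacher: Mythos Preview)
The paper does not prove this theorem; it is stated as one of Goodwillie's results from \cite{goodwillie:2003} and simply quoted. So there is no ``paper's own proof'' to compare against, only Goodwillie's original argument.

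Your outline follows the standard cross-effect route and is broadly correct in spirit, but there is a genuine gap at the step where you ``adjoint across $(\Sigma^\infty_{\mathscr{D}},\Omega^\infty_{\mathscr{D}})$ to obtain a map $G \to F$''. The fold map gives you a $\Sigma_n$-invariant map $\operatorname{cr}_n F(X,\dots,X) \to F(X)$ in $\mathscr{D}$, hence a map $[\Omega^\infty_{\mathscr{D}} H(\dots)]_{h\Sigma_n} \to F(X)$. But $G(X) = \Omega^\infty_{\mathscr{D}}\bigl[H(\dots)_{h\Sigma_n}\bigr]$, and the assembly map goes $[\Omega^\infty_{\mathscr{D}} H(\dots)]_{h\Sigma_n} \to \Omega^\infty_{\mathscr{D}}\bigl[H(\dots)_{h\Sigma_n}\bigr]$, i.e.\ \emph{into} $G(X)$, not out of it. There is no adjunction available to produce $G(X) \to F(X)$ unless you already know that $F(X)$ is an infinite loop object in $\mathscr{D}$---and that is precisely the hard content of the theorem.

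What is missing is Goodwillie's delooping result: an $n$-homogeneous functor $F\colon \mathscr{C} \to \mathscr{D}$ admits a canonical lift $\bar F\colon \mathscr{C} \to {\mathscr{S}p}(\mathscr{D})$ with $F \simeq \Omega^\infty_{\mathscr{D}} \bar F$. This is established by showing that the fibre sequence $D_nF \to P_nF \to P_{n-1}F$ deloops (since $P_{n-1}F \simeq *$ here, this says $F$ itself deloops), and iterating. Once $F$ takes values in spectra, homotopy orbits commute with everything in sight, your fold-map transformation produces the comparison in the correct direction, and the cross-effect computation you describe finishes the job. Your ``uniqueness principle'' (full faithfulness of $\operatorname{cr}_n$ on homogeneous functors) is correct but is itself typically proved using the delooping, so invoking it does not let you bypass this step.
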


\begin{example}\label{example:hom}
For functors $\mathscr{T}op_* \to \mathscr{T}op_*$, a symmetric multilinear functor is uniquely determined (on finite CW-complexes at least) by a single spectrum with a symmetric group action. Applying this classification to the layers of the Taylor tower of $F: \mathscr{T}op_* \to \mathscr{T}op_*$, we get an equivalence
\[ D_nF(X) \simeq \Omega^\infty(\partial_nF \wedge (\Sigma^\infty X)^{\wedge n})_{h\Sigma_n} \]
where $\partial_nF$ is a spectrum with a (naive) action of the symmetric group $\Sigma_n$, which we refer to as the \emph{$n$-th derivative}\index{derivative of a functor} of $F$ (at $*$). Similar formulas apply when $\mathscr{C}$ and/or $\mathscr{D}$ is ${\mathscr{S}p}$ instead of $\mathscr{T}op_*$, and sense can be made of the object $\partial_nF$ for more general $\mathscr{C}$ and $\mathscr{D}$, though in such cases $\partial_nF$ is a diagram of spectra (indexed by generators for the stable $\infty$-categories ${\mathscr{S}p}(\mathscr{C})$ and ${\mathscr{S}p}(\mathscr{D})$) rather than a single spectrum, e.g. see~\cite[1.1]{ching:2018}.
\end{example}

\begin{example} \label{ex:id}
The $n$-th derivative of the identity functor $I: \mathscr{T}op_* \to \mathscr{T}op_*$\index{derivative of the identity functor} was calculated by Brenda Johnson in \cite{johnson:1995}. The spectrum $\partial_nI$ is equivalent to a wedge of $(n-1)!$ copies of the $(1-n)$-sphere spectrum. In particular, $\partial_1I \simeq S^0$ (as mentioned above) and $\partial_2I \simeq S^{-1}$ (with trivial $\Sigma_2$-action), so
\[ D_2I(X) \simeq \Omega^{\infty}\Sigma^{-1}(\Sigma^\infty X)^{\wedge 2}_{h\Sigma_2}. \]
We can now attempt to calculate $P_2I$ using the fibre sequence $D_2I \to P_2I \to P_1I$. This takes the form
\[ \Omega^{\infty}\Sigma^{-1}(\Sigma^\infty X)^{\wedge 2}_{h\Sigma_2} \to P_2I(X) \to \Omega^\infty \Sigma^\infty X. \]
Goodwillie's results imply that this sequence deloops, so that $P_2I(X)$ can be written as the fibre of a certain natural transformation.
\end{example}

\begin{example} \label{ex:SO}
The $n$-th derivative of the functor $\Sigma^\infty \Omega^\infty: {\mathscr{S}p} \to {\mathscr{S}p}$ is equivalent to $S^0$ (with trivial $\Sigma_n$-action)~\cite[Corollary 1.3]{ahearn/kuhn:2002}. Therefore
\[ D_n(\Sigma^\infty \Omega^\infty)(X) \simeq X^{\wedge n}_{h\Sigma_n}. \]
This tells us that, for a spectrum $X$, the spectrum $\Sigma^\infty \Omega^\infty X$ is given by piecing together the extended powers $X^{\wedge n}_{h\Sigma_n}$. When $X$ is a $0$-connected suspension spectrum, Snaith splitting provides an equivalence
\[ \Sigma^\infty \Omega^\infty X \simeq \bigvee_{n \geq 1} X^{\wedge n}_{h\Sigma_n} \]
which can be interpreted as the splitting of the Taylor tower. For arbitrary $X$, the layers of the tower are pieced together in a less trivial way.
\end{example}

\subsection*{Splitting results for the Taylor tower} \label{sec:splitting}

As illustrated in Example~\ref{ex:SO}, the simplest situation holds when the Taylor tower is a product of its layers:
\[ P_nF(X) \simeq \prod_{k = 0}^{n} D_kF(X). \]

\begin{example}
Let $K$ be a $d$-dimensional based finite CW-complex and consider the functor $\Sigma^\infty \operatorname{Hom}_*(K,-): \mathscr{T}op_* \to {\mathscr{S}p}$ where $\operatorname{Hom}_*(-,-)$ denotes the space of basepoint-preserving maps between two based spaces. The first author, in~\cite{arone:1999}, proved (1) that
\[ \partial_n(\Sigma^\infty \operatorname{Hom}_*(K,-)) \simeq  \operatorname{Map}(\Sigma^\infty K^{\wedge n}/\Delta^nK,S^0) \]
where $\operatorname{Map}(-,-)$ is the mapping spectrum construction, and $\Delta^nK$ denotes the `fat diagonal', i.e. the subspace of $K^{\wedge n}$ consisting of those points $(k_1,\dots,k_n)$ where $k_i = k_j$ for some $i \neq j$, (2) that the Taylor tower converges on $d$-connected $X$, and (3) when $K$ is a parallelizable $d$-dimensional manifold and $X$ is a $d$-fold suspension, that the Taylor tower splits. Thus in the latter case we have:
\[ \Sigma^\infty \operatorname{Hom}_*(K,X) \simeq \prod_{n = 1}^{\infty} \operatorname{Map}(\Sigma^\infty K^{\wedge n}/\Delta^nK,\Sigma^\infty X^{\wedge n})_{h\Sigma_n}. \]
\end{example}

Kuhn has proved, in \cite{kuhn:2004}, the following splitting result which reveals some of the interesting interaction between Goodwillie calculus, Tate cohomology and chromatic homotopy theory.

\begin{theorem}[Kuhn] \label{thm:Kuhn}
Let $F: {\mathscr{S}p} \to {\mathscr{S}p}$ be a homotopy functor. Then the Taylor tower of $F$ splits after $T(k)$-localization. (Here $T(k)$ denotes the spectrum given by the telescope of a $v_k$-self map of a finite type-$k$ complex.) In other words
\[ L_{T(k)}P_nF(X) \simeq \prod_{j = 1}^{n} L_{T(k)}D_jF(X). \]
\end{theorem}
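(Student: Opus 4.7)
\medskip

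\noindent\textbf{Proof plan.}
The argument proceeds by induction on $n$; the base case $n = 0$ is trivial. For the inductive step, I would aim to split the fiber sequence
\[ D_nF \to P_nF \to P_{n-1}F \]
naturally after $T(k)$-localization. Equivalently, the goal is to construct, after $L_{T(k)}$, a natural retraction $P_nF \to D_nF$ of the inclusion of the fiber. The single substantive input that powers everything is the following \emph{Tate vanishing} theorem (due to Kuhn, building on Hopkins--Mahowald--Ravenel--Sadofsky and ultimately on the nilpotence and thick subcategory theorems): for any finite group $G$ (in particular $G = \Sigma_n$) and any $T(k)$-local spectrum $E$ with $G$-action, the Tate spectrum $E^{tG}$ is $T(k)$-acyclic. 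Via the norm cofiber sequence $E_{hG} \to E^{hG} \to E^{tG}$, this says that the norm map $E_{hG} \to E^{hG}$ is a $T(k)$-equivalence.

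Applying this to $E = \partial_nF \wedge X^{\wedge n}$, and using the classification of homogeneous functors (Theorem~\ref{thm:hom}, Example~\ref{example:hom}), the $n$-th layer admits an equivalent ``cofree'' description after $T(k)$-localization:
\[ L_{T(k)} D_nF(X) \;\simeq\; L_{T(k)} (\partial_nF \wedge X^{\wedge n})_{h\Sigma_n} \;\xrightarrow{\;\mathrm{Nm}\;}\; L_{T(k)} (\partial_nF \wedge X^{\wedge n})^{h\Sigma_n} \;=:\; L_{T(k)} \Phi_nF(X). \]
The retraction $L_{T(k)}P_nF \to L_{T(k)}D_nF$ is then built by first producing a natural transformation $\rho_n \colon P_nF \to \Phi_nF$ from the cross-effect diagonal: the $n$-th cross-effect $\operatorname{cr}_nF(X,\ldots,X)$ carries a natural $\Sigma_n$-action, receives a structural map from $F(X)$ coming from the fold $X \vee \cdots \vee X \to X$, and its multilinearization (whose value is $\partial_nF \wedge X^{\wedge n}$) gives, on passing to $\Sigma_n$-fixed points, a natural map $F(X) \to \Phi_nF(X)$ which factors through $P_nF$. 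Composing with the inverse of the norm equivalence above produces the desired retraction after $L_{T(k)}$.

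The main obstacle lies in verifying that the composite $L_{T(k)} D_nF \hookrightarrow L_{T(k)} P_nF \xrightarrow{L_{T(k)}\rho_n} L_{T(k)} D_nF$ is an equivalence (i.e.\ that $\rho_n$ is genuinely a retraction of the fiber inclusion, not just some map between these functors). This amounts to a compatibility between the cross-effect diagonal and the identification of the layer with homotopy orbits of an extended power, and is where the structural content of the argument resides. Once this retraction is in hand, the splitting $L_{T(k)} P_nF \simeq L_{T(k)} D_nF \times L_{T(k)} P_{n-1}F$ follows from the fiber sequence, and combining with the inductive hypothesis yields the desired product decomposition. The fact that we are working in the stable category ${\mathscr{S}p}$ rather than with space-valued functors is essential: it is what allows a retraction of the fiber inclusion to split the sequence as a product.
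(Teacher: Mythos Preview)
Your approach is correct and uses the same essential ingredients as the paper: Tate vanishing for finite groups after $T(k)$-localization, the norm equivalence $(\partial_nF \wedge X^{\wedge n})_{h\Sigma_n} \to (\partial_nF \wedge X^{\wedge n})^{h\Sigma_n}$, and a natural map $P_nF \to (\partial_nF \wedge X^{\wedge n})^{h\Sigma_n}$, followed by induction on $n$.

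The difference is in how the last ingredient is packaged. The paper does not build the retraction $\rho_n$ by hand from cross-effects; instead it invokes McCarthy's theorem (Theorem~\ref{thm:McCarthy}), which asserts that the map $P_nF(X) \to (\partial_nF \wedge X^{\wedge n})^{h\Sigma_n}$ fits into a homotopy \emph{pullback} square over $P_{n-1}F(X) \to (\partial_nF \wedge X^{\wedge n})^{t\Sigma_n}$. Once that square is in hand, $T(k)$-localizing kills the Tate corner, and a pullback over a point is simply a product: $L_{T(k)}P_nF \simeq L_{T(k)}P_{n-1}F \times L_{T(k)}(\partial_nF \wedge X^{\wedge n})^{h\Sigma_n}$. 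In particular, the ``main obstacle'' you flag---checking that $\rho_n$ restricted to the fiber recovers the norm map and hence is a genuine retraction---is absorbed into McCarthy's statement that the induced map on vertical fibers \emph{is} the classifying equivalence $D_nF \simeq (\partial_nF \wedge X^{\wedge n})_{h\Sigma_n}$. Your cross-effect construction is essentially how McCarthy's map arises anyway, so you are effectively reproving part of his theorem rather than citing it; the paper's route is shorter because the pullback property delivers the splitting and the fiber compatibility simultaneously.
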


The proof of Theorem~\ref{thm:Kuhn} relies on a number of interesting ingredients, in particular the vanishing of the $T(k)$-localization of the Tate construction associated to a finite group action on a spectrum. The specific part coming from functor calculus, however, is the following result of McCarthy~\cite{mccarthy:2001}.

\begin{theorem}[McCarthy] \label{thm:McCarthy}
For a functor $F: {\mathscr{S}p} \to {\mathscr{S}p}$ that preserves filtered colimits, there is a natural homotopy pullback square of the form
\[ \begin{diagram}
  \node{P_nF(X)} \arrow{s} \arrow{e} \node{(\partial_nF \wedge X^{\wedge n})^{h\Sigma_n}} \arrow{s} \\
  \node{P_{n-1}F(X)} \arrow{e} \node{(\partial_nF \wedge X^{\wedge n})^{t\Sigma_n}}
\end{diagram} \]
Here $Y^{tG}$ denotes the Tate construction of the action of a finite group $G$ on a spectrum $Y$, that is, the cofibre of the norm map $N: Y_{hG} \to Y^{hG}$, and the right-hand vertical map above is the canonical map from the homotopy fixed points to the Tate construction.
\end{theorem}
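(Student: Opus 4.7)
Both vertical fibres in the proposed square coincide with the $n$-th layer. Writing $A_n(X) := \partial_nF \wedge X^{\wedge n}$, an application of Theorem~\ref{thm:hom} in the stable setting (where $\Omega^\infty_{{\mathscr{S}p}}$ is the identity) identifies the fibre of the left vertical as $D_nF(X) \simeq A_n(X)_{h\Sigma_n}$. The fibre of the right vertical is also $A_n(X)_{h\Sigma_n}$, read off the defining cofibre sequence $A_n(X)_{h\Sigma_n} \to A_n(X)^{h\Sigma_n} \to A_n(X)^{t\Sigma_n}$ of the Tate construction, which in spectra is equivalently a fibre sequence. So the plan is to produce a compatible pair of horizontal natural transformations and then observe that the resulting commutative square, having equivalent vertical fibres, is automatically a pullback in the stable $\infty$-category ${\mathscr{S}p}$.

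The main construction is the bottom horizontal natural transformation $k\colon P_{n-1}F(X) \to A_n(X)^{t\Sigma_n}$, together with a compatible top horizontal $P_nF(X) \to A_n(X)^{h\Sigma_n}$. Following McCarthy, both are built from the $n$-th cross-effect $\operatorname{cr}_nF \colon {\mathscr{S}p}^n \to {\mathscr{S}p}$, which carries a natural $\Sigma_n$-action on the diagonal. The finitary hypothesis on $F$ ensures that multilinearizing $\operatorname{cr}_nF$ in each variable separately produces precisely $A_n(X_1 \wedge \cdots \wedge X_n)$, giving an equivariant map $\operatorname{cr}_nF(X,\ldots,X) \to A_n(X)$. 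The cubical description of $P_{n-1}F$ exhibits a natural transformation to $\operatorname{cr}_nF(X,\ldots,X)^{t\Sigma_n}$---morally, the $n$-th cross-effect of $P_{n-1}F$ is controlled up to a transfer image, which the Tate collapses---and composing with multilinearization yields $k$. The top horizontal is produced in parallel and, by construction, reduces to the norm map $A_n(X)_{h\Sigma_n} \to A_n(X)^{h\Sigma_n}$ on vertical fibres.

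To conclude, I would compare $P_nF$ with the pullback
\[ Q_nF(X) := P_{n-1}F(X) \times^h_{A_n(X)^{t\Sigma_n}} A_n(X)^{h\Sigma_n}. \]
By construction $Q_nF$ sits in a fibre sequence $A_n(X)_{h\Sigma_n} \to Q_nF(X) \to P_{n-1}F(X)$. Since $A_n(X)_{h\Sigma_n}$ is $n$-homogeneous, $P_{n-1}F$ is $(n-1)$-excisive, and fibres of natural transformations between $n$-excisive functors are $n$-excisive, $Q_nF$ is $n$-excisive. The constructed natural transformation $F \to Q_nF$ factors through $P_nF$ by the universal property of Theorem~\ref{thm:approx}, yielding a comparison map $P_nF \to Q_nF$ that is the identity on both $P_{n-1}$-approximations and on the $n$-th layer $A_n(X)_{h\Sigma_n}$, hence an equivalence.

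The hard part will be the honest construction of the $k$-invariant and the verification that the induced equivalence on fibres agrees with the identification of $D_nF(X)$ from Theorem~\ref{thm:hom}. This is where the hypotheses that $F$ is finitary and that the target is ${\mathscr{S}p}$ become essential: on spaces the analogous cross-effect-to-Tate construction fails to produce the correct map, and multilinearization is more delicate; on spectra, by contrast, the additive splitting of $F(X_1 \vee \cdots \vee X_n)$ through cross-effects makes the analysis tractable, and the Tate construction emerges naturally as the obstruction to the $n$-homogeneous layer splitting off $P_{n-1}F$.
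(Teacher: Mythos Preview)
The paper does not actually prove this theorem: it is stated with attribution to McCarthy~\cite{mccarthy:2001} and used as a black box in the discussion of Kuhn's splitting result. There is therefore no proof in the paper to compare your proposal against.

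That said, your outline is close to the standard argument but leaves the crucial step unjustified. The heart of the proof is not the comparison of vertical fibres (which, as you note, is immediate once the square exists) but the \emph{existence} of the bottom horizontal map $k\colon P_{n-1}F(X) \to A_n(X)^{t\Sigma_n}$. Your explanation---``the $n$-th cross-effect of $P_{n-1}F$ is controlled up to a transfer image, which the Tate collapses''---does not actually construct $k$. What one needs is the lemma that the functor $X \mapsto A_n(X)^{t\Sigma_n}$ is itself $(n-1)$-excisive: its $n$-th cross-effect is the Tate construction on an induced $\Sigma_n$-spectrum, which vanishes. Given that, one first builds a natural map $P_nF(X) \to A_n(X)^{h\Sigma_n}$ (via the identification of the multilinearized cross-effect of $P_nF$ with $A_n$), composes to the Tate, and then invokes the universal property of $P_{n-1}$ to factor through $P_{n-1}F$. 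Your final paragraph acknowledges that the honest construction of $k$ is ``the hard part'', but as written the proposal does not supply the missing idea; the $(n-1)$-excisiveness of the Tate functor is what you should isolate and prove.
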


Note that the induced map between the homotopy fibres of the vertical maps in this diagram is the equivalence $D_nF(X) \; \tilde{\longrightarrow} \; (\partial_nF \wedge X^{\wedge n})_{h\Sigma_n}$ that appears in the classification of $n$-homogeneous functors from spectra to spectra.

Kuhn's theorem, roughly speaking, follows from McCarthy's by taking $T(k)$-localization of the homotopy pullback square, and using the vanishing of the Tate construction.

The results quoted here begin to address the question of how the layers of the Taylor tower of a homotopy functor are pieced together to form the tower itself, and they start to illustrate the role of the Tate spectrum construction in that picture. We will return to this topic in Section~\ref{sec:operads}.

\section{The Taylor Tower of the Identity Functor for Based Spaces} \label{sec:id}

Let $I$ be the identity functor from the category of based spaces to itself. From the perspective of functor calculus, this is a highly non-trivial object. As noted above, $I$ is not an $n$-excisive functor for any $n$, and its derivatives have a lot of structure. Applying homotopy groups to the Taylor tower, we get a sequence
\[ \pi_*X \to \dots \to \pi_*P_nI(X) \to \pi_*P_{n-1}I(X) \to \dots \to \pi_*P_1(X) = \pi^s_*X \]
interpolating between the unstable and stable homotopy groups of a space $X$.

The first step to understanding the Taylor tower of a functor is to calculate the derivatives (and hence the layers). For the identity functor, the derivatives were, as mentioned above, calculated by Johnson in \cite{johnson:1995}. Here we give the reformulation of her result produced in \cite{arone/mahowald:1999}.

\begin{definition}\label{def:tn}
Let $\mathcal P_n$ be the poset of partitions of the set $\{1, \ldots, n\}$, ordered by refinement. Let $|\mathcal P_n|$ be the geometric realization of $\mathcal P_n$. Note that $\mathcal P_n$ has both an initial and a final object. It follows in particular that $|\mathcal P_n|$ is contractible. Let $\partial |\mathcal P_n|$ be the subcomplex of $|\mathcal P_n|$ spanned by simplices that do not contain both the initial and final element as vertices. Let $T_n=|\mathcal P_n|/\partial |\mathcal P_n|$.
\end{definition}

It is easy to see that $T_n$ is an $n-1$-dimensional complex with an action of the symmetric group $\Sigma_n$. It is well known that non-equivariantly $T_n$ is equivalent to $\bigvee_{(n-1)!} S^{n-1}$. In fact, this equivalence holds already after restricting the action from $\Sigma_n$ to $\Sigma_{n-1}$, where $\Sigma_{n-1}$ is considered a subgroup of $\Sigma_n$ in the standard way. This means that there is a $\Sigma_{n-1}$-equivariant equivalence $T_n\simeq {\Sigma_{n-1}}_+\wedge S^{n-1}$ \cite{arone/brantner:2018}.

\begin{theorem}\cite{johnson:1995, arone/mahowald:1999}  \label{theorem: brenda}
There is a $\Sigma_n$-equivariant equivalence of spectra\index{derivative of the identity functor}
\[
\partial_nI \simeq \mathbb{D}(T_n)
\]
between the $n$-th derivative of the identity functor and the Spanier-Whitehead dual of the complex $T_n$.
\end{theorem}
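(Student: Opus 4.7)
The plan is to identify $D_n I$ as a functor of the form $\Omega^\infty(E \wedge X^{\wedge n})_{h\Sigma_n}$ for an explicit $\Sigma_n$-spectrum $E$, and then recognise $E$ as $\mathbb{D}(T_n)$. By Theorem~\ref{thm:hom}, any such equivalence automatically identifies $E$ with $\partial_n I$, so the task reduces to producing a natural model for $D_n I(X)$ and reading off the underlying $\Sigma_n$-spectrum; moreover, since $\partial_n I$ is the derivative at $*$ and does not depend on $X$, it is enough to work on a convenient class of inputs, say simply-connected $X$.

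The approach I would take is to use the Bousfield--Kan cosimplicial resolution of the identity coming from the monad $Q = \Omega^\infty \Sigma^\infty$ on based spaces. On a simply-connected $X$ the unit provides an equivalence
\[ X \xrightarrow{\ \simeq\ } \operatorname{Tot}\bigl( Q^{\bullet + 1}(X) \bigr). \]
Applying the Taylor tower levelwise and commuting $D_n$ with $\operatorname{Tot}$ (see the caveat below) yields $D_n I(X) \simeq \operatorname{Tot}\bigl( D_n Q^{\bullet+1}(X) \bigr)$. The benefit of this move is that the Taylor tower of an iterate of $Q$ is computable: starting from the fact that $\partial_m Q \simeq S^0$ for all $m$ (the space-level analogue of Example~\ref{ex:SO}, via the Snaith splitting) and iterating a chain rule for derivatives, $\partial_n(Q^{k+1})$ is, $\Sigma_n$-equivariantly, a wedge of copies of $S^0$ indexed by a combinatorial datum involving chains of surjections out of $\underline{n} = \{1,\dots,n\}$ of length $k$.

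The decisive combinatorial step is to identify the normalised Moore totalization of this cosimplicial $\Sigma_n$-spectrum with $\mathbb{D}(T_n)$. Modulo the codegeneracies, the indexing data in cosimplicial degree $k$ collapse to chains $\hat 0 = \pi_0 < \pi_1 < \cdots < \pi_{k-1} < \pi_k = \hat 1$ in the partition poset $\mathcal{P}_n$, with $\hat 0$ the discrete and $\hat 1$ the indiscrete partition of $\underline{n}$, i.e.\ to simplices of $|\mathcal{P}_n|$ containing both the initial and final vertices. This is precisely the cellular data of $T_n = |\mathcal{P}_n|/\partial|\mathcal{P}_n|$, so the totalization computes $\mathbb{D}(T_n)$, with the $\Sigma_n$-action on chains given by permutation of the underlying set $\underline{n}$ on which the partitions are defined.

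The main obstacle is justifying that $D_n$ commutes with $\operatorname{Tot}$ in this instance: $P_n$ (and hence $D_n$) is built from sequential colimits, while $\operatorname{Tot}$ is a limit. I would handle this using the $1$-analyticity of the identity (via Blakers--Massey) to force strong convergence of the Goodwillie spectral sequences of Definition~\ref{def:gss} on simply-connected inputs, compatibly with the convergence of the Bousfield--Kan spectral sequence of the resolution. A secondary technical point is bookkeeping of the $\Sigma_n$-equivariance through each application of the chain rule; this is handled by tracking the action on $\underline{n}$ at every stage, which matches the action permuting the underlying set of each partition and hence transfers correctly to $T_n$.
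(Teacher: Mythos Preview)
The paper does not prove Theorem~\ref{theorem: brenda} where it is stated; the result is cited from \cite{johnson:1995} and \cite{arone/mahowald:1999}. However, an argument essentially equivalent to yours appears later in the paper, in the remark following Theorem~\ref{thm:koszul}: resolve the identity cosimplicially via the $(\Sigma^\infty,\Omega^\infty)$ adjunction, take derivatives termwise, and identify the result as the cobar complex on the commutative cooperad, whose normalised totalization is the partition complex. So your outline matches what the paper eventually sketches, and is essentially the argument of \cite{ching:2005}; it is not Johnson's original proof, which works directly with Goodwillie's explicit construction of $P_n$ and with cross-effects, and whose output was only afterwards reinterpreted by Arone--Mahowald in terms of $T_n$. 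Your route has the advantage of making the operadic structure on $\partial_*I$ visible; Johnson's is more elementary in that it does not require commuting $D_n$ past a totalization.

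One step needs adjustment. You invoke ``a chain rule for derivatives'' to compute $\partial_n(Q^{k+1})$ for $Q=\Omega^\infty\Sigma^\infty\colon\mathscr{T}op_*\to\mathscr{T}op_*$, but the chain rule in the form $\partial_*(FG)\simeq\partial_*F\circ\partial_*G$ of (\ref{eq:chain}) holds for functors of \emph{spectra}; for functors of based spaces Theorem~\ref{thm:operad} gives a relative composition product over $\partial_*I$, which is exactly what you are trying to compute, so the argument as written is circular. The fix, which the paper's remark makes explicit, is to rewrite the cosimplicial object as $\Omega^\infty(\Sigma^\infty\Omega^\infty)^\bullet\Sigma^\infty$, so that the iterated middle part is a self-functor of spectra and the spectrum-level chain rule applies cleanly; alternatively, iterate the Snaith splitting of $\Sigma^\infty\Omega^\infty$ directly. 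With that change your sketch is correct.
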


It turns out that the Taylor tower of the identity functor has some rather special properties when evaluated at a sphere. The results are cleanest to state for an odd-dimensional sphere, so we will mostly focus on this case.

We begin by noting that rationally the tower is constant.
\begin{theorem} \cite[Proposition 3.1]{arone/mahowald:1999}\label{thereom: rational}
Let $X$ be an odd-dimensional sphere. The spectrum
\[(\partial_nI \wedge X^{\wedge n})_{h\Sigma_n}\]
is rationally contractible for $n>1$.
\end{theorem}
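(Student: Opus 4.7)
The plan is to identify the rational $\Sigma_n$-equivariant homotopy type of $\partial_n I \wedge X^{\wedge n}$ explicitly and reduce the claim to the classical vanishing of $\operatorname{Lie}(n)_{\Sigma_n}$ for $n > 1$. By Theorem~\ref{theorem: brenda}, I may replace $\partial_n I$ by $\mathbb{D}(T_n)$. The first step is to pin down $T_n$ rationally as a $\Sigma_n$-spectrum: since $|\mathcal{P}_n|$ is contractible, the cofibre sequence $\partial|\mathcal{P}_n| \to |\mathcal{P}_n| \to T_n$ gives $\tilde{H}^{*}(T_n;\mathbb{Q}) \cong \tilde{H}^{*-1}(\partial|\mathcal{P}_n|;\mathbb{Q})$ $\Sigma_n$-equivariantly. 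A direct decomposition of the simplices identifies $\partial|\mathcal{P}_n|$ with the unreduced suspension of the proper part $|\bar{\Pi}_n|$ of the partition lattice, with the initial and final elements of $\mathcal{P}_n$ serving as cone points. Hence
\[ \tilde{H}^{n-1}(T_n;\mathbb{Q}) \;\cong\; \tilde{H}^{n-3}(|\bar{\Pi}_n|;\mathbb{Q}) \]
as $\Sigma_n$-representations, and all other rational cohomology of $T_n$ vanishes. The classical theorem of Joyal--Klyachko--Stanley identifies this top cohomology with $\operatorname{sgn}_n \otimes \operatorname{Lie}(n)$, where $\operatorname{Lie}(n)$ is the arity-$n$ component of the Lie operad. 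Combined with self-duality of rational $\Sigma_n$-representations (their characters are integer-valued), this yields a rational $\Sigma_n$-equivalence
\[ \partial_n I \otimes \mathbb{Q} \;\simeq\; S^{1-n} \otimes \bigl(\operatorname{sgn}_n \otimes \operatorname{Lie}(n)\bigr). \]

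Next, for $X = S^{2k+1}$, the smash power $X^{\wedge n}$ is $S^{n(2k+1)}$ with $\Sigma_n$ acting through the sign representation, since a transposition of two odd-dimensional sphere factors has degree $(-1)^{(2k+1)^2} = -1$. Smashing with $\partial_n I$, the two copies of $\operatorname{sgn}_n$ cancel, producing
\[ (\partial_n I \wedge X^{\wedge n}) \otimes \mathbb{Q} \;\simeq\; S^{2kn+1} \otimes \operatorname{Lie}(n). \]
Rational homotopy orbits for a finite group action compute coinvariants on rational homotopy, so
\[ (\partial_n I \wedge X^{\wedge n})_{h\Sigma_n} \otimes \mathbb{Q} \;\simeq\; S^{2kn+1} \otimes \operatorname{Lie}(n)_{\Sigma_n}. \]
The space $\operatorname{Lie}(n)_{\Sigma_n}$ is the arity-$n$ piece of the free rational Lie algebra on a single generator $x$; for $n \geq 2$ this vanishes, since $[x,x]=0$ forces every iterated self-bracket of $x$ to be zero.

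The main obstacle is the rational $\Sigma_n$-equivariant identification of the top cohomology of $|\bar{\Pi}_n|$ with $\operatorname{sgn}_n \otimes \operatorname{Lie}(n)$; everything else is bookkeeping with Spanier--Whitehead duality, the sign representation on smash powers of odd spheres, and the vanishing of $\operatorname{Lie}(n)_{\Sigma_n}$. Fortunately this identification is a classical result in combinatorial representation theory that can simply be cited. One could also bypass the explicit appearance of $\operatorname{Lie}(n)$ and instead verify directly that the trivial $\Sigma_n$-representation does not occur in $\tilde{H}^{n-3}(|\bar{\Pi}_n|;\mathbb{Q}) \otimes \operatorname{sgn}_n$, but this requires essentially the same input.
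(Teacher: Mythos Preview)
Your argument is correct, but it takes a different route from the paper's. The paper's proof sketch does not compute the rational $\Sigma_n$-representation $\tilde{H}^{*}(T_n;\mathbb{Q})$ at all. Instead, it filters $\mathbb{D}(T_n)\wedge X^{\wedge n}$ by an equivariant cell structure on $T_n$, reducing the problem to showing that $\Sigma^\infty X^{\wedge n}_{hG}$ is rationally trivial for every isotropy group $G$ of $T_n$. Since each such $G$ contains a transposition (for any chain through $\hat 0$ and $\hat 1$, the first merge above $\hat 0$ joins two singletons $\{i\},\{j\}$, and $(ij)$ stabilizes the whole chain), $G$ acts on $H_{*}(X^{\wedge n};\mathbb{Q})\cong\operatorname{sgn}_n$ with an element acting by $-1$, so the coinvariants vanish.

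Your approach trades this elementary isotropy analysis for a heavier citation: the Joyal--Klyachko--Stanley identification of $\tilde H^{n-3}(|\bar\Pi_n|;\mathbb{Q})$ with $\operatorname{sgn}_n\otimes\operatorname{Lie}(n)$, together with the vanishing of $\operatorname{Lie}(n)_{\Sigma_n}$. What this buys you is a transparent link to the Lie-operadic structure on $\partial_*I$ that is emphasized elsewhere in the paper, and it makes the odd-sphere hypothesis visibly cancel one copy of $\operatorname{sgn}_n$. What the paper's argument buys is that it is essentially self-contained and avoids any appeal to the representation theory of the partition lattice; it also generalizes more readily (the same isotropy argument underlies the mod-$p$ results that follow). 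Your final remark that one could ``bypass the explicit appearance of $\operatorname{Lie}(n)$'' is in spirit what the paper does, but note that the paper's version is strictly easier than checking multiplicities in $\tilde H^{n-3}(|\bar\Pi_n|;\mathbb{Q})$: it never computes that cohomology.
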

\begin{proof}[Proof sketch] Since $\partial_nI \simeq \mathbb{D}(T_n)$, it is enough to prove that $\Sigma^{\infty} X^{\wedge n}_{hG}$ has trivial rational homology when $G$ is any isotropy group of $T_n$. This follows by an easy spectral sequence argument.
\end{proof}
This strengthens the following classical computation of Serre:
\begin{corollary}[Serre, 1953] \label{corollary: serre}
When $X$ is an odd-dimensional sphere, the map $X\to \Omega^\infty\Sigma^\infty X$ is a rational homotopy equivalence.
\end{corollary}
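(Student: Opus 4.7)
The plan is to read off the corollary from the Taylor tower of the identity functor, using Theorem~\ref{thereom: rational} to annihilate all higher layers rationally and convergence of the tower to collapse the answer onto $P_1I(X) \simeq \Omega^\infty\Sigma^\infty X$. The goal is therefore to show that the unit map $X \to P_1I(X)$ is a rational equivalence.

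First I would dispose of the degenerate case $X = S^1$ directly: both $\pi_*(S^1) \otimes \mathbb{Q}$ and $\pi_*^s(S^1) \otimes \mathbb{Q}$ are $\mathbb{Q}$ concentrated in degree one (the latter by Serre finiteness of the positive stable stems), and the unit map sends generator to generator, so it is visibly a rational equivalence. Now assume $X = S^{2k+1}$ with $k \geq 1$, so that $X$ is at least $2$-connected. Since the identity functor on based spaces is $1$-analytic, its Taylor tower converges at $X$, giving an equivalence
\[ X \longrightarrow \operatorname{holim}_n P_nI(X). \]
Combining Theorem~\ref{thereom: rational} with the classification of homogeneous functors (Example~\ref{example:hom}) shows that for $n > 1$ the layer $D_nI(X) \simeq \Omega^\infty(\partial_nI \wedge X^{\wedge n})_{h\Sigma_n}$ has trivial rational homotopy groups. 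Feeding this into the fibre sequence $D_nI(X) \to P_nI(X) \to P_{n-1}I(X)$ via the rationalized long exact sequence yields that each map $P_nI(X) \to P_{n-1}I(X)$ is a rational equivalence for $n > 1$, and hence so is $P_nI(X) \to P_1I(X)$ for every $n \geq 1$.

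To conclude in a fixed degree $q$, I would invoke the connectivity estimates coming from $1$-analyticity: since $X$ is simply-connected, the map $X \to P_nI(X)$ becomes arbitrarily highly connected as $n \to \infty$, so for $n$ sufficiently large (depending on $q$) it induces an isomorphism on $\pi_q$. Composing with the chain of rational equivalences $P_nI(X) \to \cdots \to P_1I(X) = \Omega^\infty\Sigma^\infty X$ established above gives the desired isomorphism $\pi_q(X) \otimes \mathbb{Q} \cong \pi_q(\Omega^\infty\Sigma^\infty X) \otimes \mathbb{Q}$. The only real obstacle in this argument is this final bookkeeping step, of carefully converting the layerwise rational vanishing into a rational equivalence for the unit map; Theorem~\ref{thereom: rational} does essentially all of the substantive work.
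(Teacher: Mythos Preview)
Your argument is correct and is exactly the deduction the paper intends: the corollary is stated immediately after Theorem~\ref{thereom: rational} with no separate proof, the implicit argument being precisely that the rational vanishing of the higher layers forces $X \to P_1I(X) \simeq \Omega^\infty\Sigma^\infty X$ to be a rational equivalence via convergence of the tower. Your added care with the $S^1$ case and the connectivity bookkeeping at the end are appropriate details the paper leaves to the reader.
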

It follows from the theorem that for $X$ an odd sphere, the homology of $(\partial_nI \wedge X^{\wedge n})_{h\Sigma_n}$ is torsion for all $n>1$. The following theorem gives considerably more information about how the torsion is distributed among the layers.
\begin{theorem}\cite{arone/mahowald:1999, arone/dwyer:2001}\label{theorem: primary}
Let $X$ be an odd-dimensional sphere, and let $p$ be a prime. The homology with mod $p$ coefficients of the spectrum $(\partial_nI \wedge X^{\wedge n})_{h\Sigma_n}$ is non-trivial only if $n$ is a power of $p$.
\end{theorem}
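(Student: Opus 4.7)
The plan is to reduce the question to a $\Sigma_n$-equivariant fixed-point analysis on the partition complex. Using Theorem~\ref{theorem: brenda}, I would first rewrite the spectrum of interest as
\[ (\partial_n I \wedge X^{\wedge n})_{h\Sigma_n} \simeq (\mathbb{D}(T_n) \wedge X^{\wedge n})_{h\Sigma_n}. \]
Since $X$ is an odd-dimensional sphere, $X^{\wedge n}$ is itself a sphere on which $\Sigma_n$ acts through the sign representation. Computing mod $p$ homology of the displayed spectrum therefore reduces to a cellular chain computation over $T_n$ with twisted coefficients, where the twist is given by the sign representation of $\Sigma_n$.

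The second step is the standard transfer/detection reduction from finite-group (co)homology. For any finite group $G$ and any $G$-spectrum $Y$, the composition of transfer with restriction to a Sylow $p$-subgroup is multiplication by the index (which is a $p$-adic unit), so $H_*(Y_{hG};\mathbb{F}_p)$ is a split summand of $H_*(Y_{hP};\mathbb{F}_p)$. Applying this inside $\Sigma_n$, together with Quillen-style detection on elementary abelian $p$-subgroups, it is enough to show that for every elementary abelian $p$-subgroup $E \leq \Sigma_n$ the contribution of the $E$-Borel construction vanishes whenever $n$ is not a power of $p$. Via a Borel-type spectral sequence, this contribution is controlled by the $E$-fixed point space of the partition complex.

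The third step is the geometric core. By a theorem of Klass, further developed in Arone--Dwyer, the fixed point space $|\mathcal{P}_n|^E$ is contractible unless $E$ acts freely on $\{1,\dots,n\}$, in which case the orbits all have size $|E|$, forcing $|E|$ to divide $n$. Passing to the reduced complex $T_n = |\mathcal{P}_n|/\partial|\mathcal{P}_n|$ and tracking which fixed-point contributions survive the quotient by the boundary, one sees that $T_n^E$ fails to be contractible only when $E$ acts \emph{freely and transitively} on $\{1,\dots,n\}$. This last condition pins down $n = |E| = p^{\mathrm{rk}(E)}$, a power of $p$, which is exactly the conclusion we want.

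The main obstacle will be handling the twisted coefficients coming from the sign representation consistently with the transfer and fixed-point reductions. For $p = 2$ the sign is trivial modulo $p$, so no issue. For $p$ odd, any elementary abelian $p$-subgroup $E$ that acts freely on $\{1,\dots,n\}$ decomposes that set into $p$-cycles under each generator, and $p$-cycles with $p$ odd are even permutations; hence $E \subseteq A_n$ and the sign representation restricts trivially, so that the relevant twisted cellular chains agree with the untwisted ones over $\mathbb{F}_p$. Verifying this compatibility, and carefully tracking the spectral sequence from the cellular filtration of $T_n$ so that the vanishing of $T_n^E$ for non-free $E$ really forces the corresponding contribution to vanish, is the delicate part of the argument.
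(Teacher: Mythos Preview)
The paper does not actually prove this theorem in the text; it only records that \cite{arone/mahowald:1999} gave a brute-force computation of the mod~$p$ homology and that \cite{arone/dwyer:2001} supplied a conceptual argument. Your plan is in the spirit of the latter, and the fixed-point lemma you invoke in step three --- that $T_n^{E}$ is contractible unless the elementary abelian $p$-group $E$ acts freely and transitively --- is indeed the geometric core of that approach.

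The gap is in step two. Transfer to a Sylow $p$-subgroup $P\le\Sigma_n$ is fine, but ``Quillen-style detection on elementary abelian $p$-subgroups'' does not justify the next move: Quillen's $F$-isomorphism concerns the cohomology ring of $BG$ modulo nilpotents, and says nothing about vanishing of the mod~$p$ Borel homology of an arbitrary $G$-spectrum once its restrictions to elementary abelians vanish. The subsequent claim that a ``Borel-type spectral sequence'' makes $H_*(Y_{hE};\mathbb{F}_p)$ ``controlled by the $E$-fixed point space'' is likewise unsupported; the localization theorems relating Borel (co)homology to fixed points require inverting Euler classes and do not yield vanishing outright. So even granting the correct fixed-point lemma, the argument as written does not close.

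There are two honest repairs. One is to replace Quillen detection by an actual mod-$p$ homology decomposition of the Borel construction (a centralizer or normalizer decomposition in the sense of Dwyer), which legitimately expresses $(\,\cdot\,)_{h\Sigma_n}$ as a homotopy colimit over pieces indexed by elementary abelian $p$-subgroups and their centralizers; your fixed-point lemma then enters exactly as you intend. The other, more direct, route is to note that if $n$ is not a power of $p$ then no $p$-subgroup of $\Sigma_n$ can act transitively on $\{1,\dots,n\}$ (orbit--stabilizer would force $n\mid|P|$), so $P$ already lies in a proper Young subgroup $\Sigma_a\times\Sigma_b$; an equivariant branching decomposition of the partition complex over such a Young subgroup then shows $T_n$ is $P$-equivariantly contractible, and the transfer finishes. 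Either way your sign-representation discussion is correct but becomes unnecessary. (I do not recognize the attribution to ``Klass''; the fixed-point statement is usually credited to \cite{arone/dwyer:2001}.)
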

Note that it follows that if $n$ is not a prime power then the spectrum $(\partial_nI \wedge X^{\wedge n})_{h\Sigma_n}$ is contractible, since it is a connective spectrum of finite type whose homology is trivial with rational and mod $p$ coefficients for all primes $p$. Theorem~\ref{theorem: primary} implies that if one is willing to pick a prime $p$ and localize all spaces at $p$, then the only non-trivial layers in the tower are the ones numbered by powers of $p$. Thus the tower converges exponentially faster than usual in this case.

Theorem~\ref{theorem: primary} was first proved in~\cite{arone/mahowald:1999}, by a brute force calculation of the homology with mod $p$ coefficients. A more conceptual proof was given in~\cite{arone/dwyer:2001}. Furthermore, when $n=p^k$, it turns out that the spectrum $(\partial_nI \wedge X^{\wedge n})_{h\Sigma_n}$ is closely related to well-studied spectra in the literature (remember that $X$ is an odd sphere throughout the discussion). In particular, this spectrum is equivalent to (the $k$-fold desuspension of) the direct summand of the spectrum $\Sigma^\infty X^{\wedge p^k}_{h({\mathbb Z}/p)^k}$ split off by the Steinberg idempotent. Such wedge summands were studied by Mitchell, Kuhn, Priddy and others. In the case $X=S^1$, this spectrum is equivalent (up to a suspension) to the $n$-th subquotient in the filtration of $H\mathbb Z$ by symmetric powers of the sphere spectrum.

It turns out that Serre's theorem on rational homotopy groups of spheres (corollary~\ref{corollary: serre}) admits a rather dramatic generalization to $v_k$-periodic homotopy for all $k$ (rational homotopy fits in as the case $k=0$). We will now state the result, and then spend the rest of the section explaining what it says and outlining its proof.
\begin{theorem}\cite{arone/mahowald:1999} \label{thm:vk-periodic}
Let $X$ be an odd-dimensional sphere, and work $p$-locally for a prime $p$. For $k\ge 0$, the map $X\mapsto P_{p^k}I(X)$ is a $v_k$-periodic equivalence.
\end{theorem}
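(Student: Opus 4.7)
The plan is to combine three ingredients: convergence of the Taylor tower of the identity on simply-connected spaces, the $p$-local sparsity of layers provided by Theorem~\ref{theorem: primary}, and the chromatic height of the Mitchell--Priddy--Kuhn Steinberg summands. Since odd spheres are simply connected and $I$ is $1$-analytic, Goodwillie's convergence theorem gives $X \simeq \operatorname{holim}_n P_n I(X)$. Consequently the homotopy fibre of $X \to P_{p^k} I(X)$ is
\[ F(X) \simeq \operatorname{holim}_{n>p^k} \operatorname{hofib}\bigl(P_n I(X) \to P_{p^k} I(X)\bigr), \]
and it is enough to show that $F(X)$ is $v_k$-periodically contractible.

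Working $p$-locally, Theorem~\ref{theorem: primary} implies $D_n I(X) \simeq *$ unless $n$ is a power of $p$. Consequently each $\operatorname{hofib}\bigl(P_n I(X) \to P_{p^k} I(X)\bigr)$ has a finite filtration whose only non-trivial subquotients are the layers $D_{p^j} I(X)$ with $k < p^j \leq n$. Applying the Bousfield--Kuhn functor $\Phi_k$ (which preserves fibre sequences and, thanks to Goodwillie's linear connectivity estimates for the identity on simply-connected spaces, commutes with this particular sequential limit) reduces the theorem to the following pointwise claim: for each $j>k$, the layer $D_{p^j} I(X)$ is $v_k$-periodically trivial.

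The main work, and the principal obstacle, is to verify this last claim. Using the identification recalled in the text just before the statement of the theorem, one has
\[ (\partial_{p^j} I \wedge X^{\wedge p^j})_{h\Sigma_{p^j}} \simeq \Sigma^{-j}\bigl(L(j) \wedge X^{\wedge p^j}\bigr), \]
where $L(j)$ is the Steinberg summand of $\Sigma^\infty B(\mathbb{Z}/p)^j_+$ picked out by the Steinberg idempotent. One must show that $L(j)$ has vanishing $v_k$-periodic homotopy for $j > k$; smashing with the $p^j$-th power of an odd sphere only introduces a shift and so does not affect the telescopic height. This vanishing is the chromatic height statement for Steinberg summands, due to the work of Mitchell, Priddy and Kuhn: at height $k$ the spectrum $L(j)$ is detected only by rank-$j$ elementary abelian subgroups of $(\mathbb{Z}/p)^j$, which is incompatible with $k<j$. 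Once this input is invoked, the $v_k$-periodic vanishing of every relevant layer is assembled via the fibre sequences of the Taylor tower to give $\Phi_k(F(X)) \simeq *$, and hence the desired $v_k$-periodic equivalence $X \to P_{p^k} I(X)$. The specialization $k=0$ recovers Serre's rational statement (Corollary~\ref{corollary: serre}).
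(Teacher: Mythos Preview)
Your overall strategy matches the paper's: reduce to the fibre $F_k(X)$ of $X\to P_{p^k}I(X)$, filter it by the tower, and show each remaining layer $D_{p^j}I(X)$ with $j>k$ is $v_k$-periodically trivial. But you skip the step the paper singles out as the genuine difficulty.

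The gap is your parenthetical claim that $\Phi_k$ (equivalently, $v_k^{-1}\pi_*(-;V_k)$) ``commutes with this particular sequential limit'' because of Goodwillie's connectivity estimates. The paper explicitly warns that this conclusion is \emph{not automatic}: inverting $v_k$ is a sequential colimit, and it need not commute with the inverse limit defining $F_k(X)$, even when the layers have connectivity tending to infinity. Knowing only that each $D_{p^j}I(X)$ has trivial $v_k$-periodic homotopy tells you that every finite stage $\overline{P_{p^n}}(X)$ is $v_k$-acyclic, but for a class $\alpha\in\pi_*(F_k(X)^{V_k})$ the nilpotence order of $\nu_k$ on the image of $\alpha$ at stage $n$ may grow with $n$, so no single power of $\nu_k$ need kill $\alpha$ itself. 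The paper resolves this with a quantitative argument: it uses the $A_{l-1}$-freeness of the cohomology of the layers (Proposition~\ref{proposition: A_k free}) together with the Anderson--Davis/Miller--Wilkerson vanishing-line theorem to control the Adams filtration, and then compares the growth of the topological degree of $\nu_k^{l_n}(\alpha)$ against the (faster) growth of the connectivity of $D_{p^n}I(X)^{V_k}$. This comparison is what forces $\nu_k^{l_n}(\alpha)=0$ for large $n$; mere connectivity estimates do not supply it.

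A secondary point: your input for the layer vanishing---the chromatic type of the Steinberg summands $L(j)$---is morally equivalent to what the paper uses, but stated that way it gives only the qualitative fact that $L(j)$ is $v_k$-acyclic for $j>k$. The paper's route via $A_{l-1}$-freeness is what produces the vanishing line with controlled slope and intercept needed for the inverse-limit argument above. (Also, ``odd spheres are simply connected'' fails for $S^1$, so that sentence needs adjusting.)
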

We will now take a detour to review some background on $v_k$-periodic homotopy. For an omnibus reference on this material we suggest Ravenel's orange book~\cite{ravenel:1992} (note that a revised version is available online). Fix a prime $p$ and let all spaces be implicitly localized at $p$. Homology and cohomology groups will be taken with mod $p$ coefficients. Recall that for each integer $n\ge 0$ there is a generalized homology theory $K(n)$ called the $n$-th Morava $K$-theory. For $n=0$, $K(0)=H{\mathbb Q}$, and for $n>0$ the coefficient ring of $K(n)$ is $K(n)_*={\mathbb F}_p[v_n, v_n^{-1}]$, where $|v_n|=2p^n-2$.
\begin{definition}
A finite complex $V$ is said to be of \emph{type $k$} if $K(n)_*V$ is trivial for $n< k$ and non-trivial for $n=k$.
\end{definition}
By the Periodicity Theorem~\cite{devinatz/hopkins/smith:1988},~\cite[Theorem 1.5.4]{ravenel:1992}, for each $k\ge 0$ there exists a finite complex of type $k$. Furthermore, suppose $k\ge 1$, and $V_k$ is a complex of type $k$. Then there exists a self-map $f\colon \Sigma^{d|v_k|+i} V_k \to \Sigma^iV_k$ for some $i\ge 0, d \ge 1$, whose effect on $K(n)_*$ is an isomorphism for $n=k$ and zero for $n>k$. 
A map with these properties is called a $v_k$-periodic map. By the uniqueness part of the Periodicity Theorem, any two $v_k$-periodic self maps of $V_k$ are equivalent after taking some suspensions and iterations.

Suppose $X$ is either a pointed space or a spectrum, and $V$ is a finite complex with a basepoint. If $X$ is a space, let $X^V$ denote the space of basepoint-preserving maps from $V$ to $X$. If $X$ is a spectrum, then let $X^V$ denote the mapping spectrum from $\Sigma^\infty V$ to $X$. Clearly, if $X$ is a spectrum then $\Omega^\infty (X^V)\cong (\Omega^\infty X)^V$. Let $V_k$ be a complex of type $k$. By replacing $V_k$ with some suspension thereof if necessary, we may assume that $V_k$ has a self map of the form $\Sigma^{d|v_k|} V_k \to V_k$. This map induces a map $X^{V_k}\to \Omega^{d|v_k|} X^{V_k}$, where $X$ is still either a pointed space or a spectrum. We can form a mapping telescope as follows
\[ v_k^{-1}X^{V_k}:=\operatorname{hocolim} (X^{V_k}\to \Omega^{d|v_k|} X^{V_k} \to  \Omega^{2d|v_k|} X^{V_k} \to \cdots). \]
The mapping telescope serves as the definition of $v_k^{-1}X^{V_k}$. If $X$ is a space, then $v_k^{-1}X^{V_k}$ is, by definition, a space, that is easily seen to be an infinite loop space. If $X$ is a spectrum then $v_k^{-1}X^{V_k}$ is a spectrum. Clearly, if $X$ is a spectrum then
\[ \Omega^\infty(v_k^{-1}X^{V_k})\simeq v_k^{-1}(\Omega^\infty X)^{V_k}. \]

We define the $v_k$-periodic homotopy groups of $X$ with coefficients in $V_k$ as follows.
\[ v_k^{-1}\pi_*(X; V_k):= \pi_*\left(v_k^{-1}X^{V_k}\right)\cong \operatorname{colim} (\pi_*(X^{V_k})\to \pi_{*+d|v_k|}(X^{V_k})\to \cdots). \]
As before, the groups $v_k^{-1}\pi_*(X; V_k)$ are defined if $X$ is either a space or a spectrum. It is easy to see that if $X$ is a spectrum then there is a canonical isomorphism
\begin{equation}\label{equation: loopinfinity}
v_k^{-1}\pi_*(X; V_k)\cong v_k^{-1}\pi_*(\Omega^\infty X; V_k)
\end{equation}
The groups $v_k^{-1}\pi_*(X; V_k)$ are periodic with period that divides $d|v_k|$. They depend on the choice of $V_k$, but by the uniqueness statement above they do not depend on the self-map of $V_k$ (up to a dimension shift).

While the groups $v_k^{-1}\pi_*(X; V_k)$ depend on a choice of $V_k$, the following is a well-known consequence of the Thick Subcategory Theorem
\begin{proposition}\label{proposition: v_k independent}
Let $f\colon X\to Y$ be a map of spaces. If there exists one complex $V_k$ of type $k$ for which $f$ induces an isomorphism $v_k^{-1}\pi_*(X; V_k)\to v_k^{-1}\pi_*(Y; V_k)$ then $f$ induces an isomorphism for every such $V_k$.
\end{proposition}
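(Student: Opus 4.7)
The plan is to invoke the Hopkins--Smith Thick Subcategory Theorem. Write $\mathcal{C}_{\geq k}$ for the thick subcategory of finite $p$-local spectra $W$ satisfying $K(n)_*W = 0$ for $n < k$. The Thick Subcategory Theorem identifies $\mathcal{C}_{\geq k}$ as the smallest thick subcategory containing any chosen type-$k$ complex $V_k$; thus any other type-$k$ complex $V_k'$ can be built from $V_k$ by a finite sequence of cofibers, retracts, and suspensions inside $\mathcal{C}_{\geq k}$.

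I extend the notation $v_k^{-1}\pi_*(X; W)$ from type-$k$ complexes to all $W \in \mathcal{C}_{\geq k}$ by choosing any $v_k$-self map of $W$: such a map exists by the Periodicity Theorem, is unique up to iterates, and is nilpotent precisely when $W$ has type strictly greater than $k$, in which case the resulting telescope, and hence $v_k^{-1}\pi_*(X;W)$, vanishes (consistent with the isomorphism condition being trivially satisfied there). Let $\mathcal{T} \subseteq \mathcal{C}_{\geq k}$ be the full subcategory on those $W$ for which $f$ induces an isomorphism $v_k^{-1}\pi_*(X; W) \to v_k^{-1}\pi_*(Y; W)$. By hypothesis $V_k \in \mathcal{T}$, so it suffices to show $\mathcal{T}$ is thick: the Thick Subcategory Theorem will then force $\mathcal{T} = \mathcal{C}_{\geq k}$, and in particular $\mathcal{T}$ will contain every type-$k$ complex, which is exactly the conclusion.

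Closure of $\mathcal{T}$ under retracts is immediate, since $v_k^{-1}\pi_*(-; W')$ sits as a retract of $v_k^{-1}\pi_*(-; W)$ whenever $W'$ retracts off $W$. For closure under cofibers, given a cofiber sequence $A \to B \to C$ in $\mathcal{C}_{\geq k}$, I would produce a long exact sequence of $v_k^{-1}\pi_*(X; -)$ groups (and similarly for $Y$), after which the five lemma gives the conclusion. The principal technical obstacle lies precisely in producing this long exact sequence: the $v_k$-self maps on $A$, $B$, $C$ are chosen independently, so the defining mapping telescopes carry no built-in compatibility with the maps in the cofiber sequence. The remedy is again the uniqueness clause of the Periodicity Theorem, which allows one to replace the chosen self maps by sufficiently divisible common iterates so that they assemble into a self map of the entire cofiber sequence $A \to B \to C$. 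The mapping telescopes then themselves fit into a cofiber sequence, yielding the required long exact sequence upon applying $\pi_*$. With this in hand, the five lemma closes the argument, $\mathcal{T}$ is thick, and the proposition follows.
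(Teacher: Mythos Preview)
Your proof is correct and follows the route the paper indicates: the paper offers no argument beyond calling the proposition a well-known consequence of the Thick Subcategory Theorem, and yours is the standard fleshing-out of that claim. One small sharpening: the compatibility of $v_k$-self maps across a map $A \to B$ of finite complexes is a bit more than the bare uniqueness clause---it is the naturality/centrality result of Hopkins--Smith that suitable iterates of $v_k$-self maps commute with any map between type $\geq k$ complexes---but this is standard and your invocation of it is appropriate.
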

We say that a map $X\to Y$ is a $v_k$-equivalence if it induces an isomorphism on $v_k$-periodic groups for some, and therefore every, choice of a complex $V_k$ of type $k$. This explains the use of the term in Theorem~\ref{thm:vk-periodic}.

A convenient subclass of complexes of type $k$ are ones that are {\it strongly of type $k$}. They are defined by certain freeness properties of the Steenrod algebra action, and the requirement that the AHSS for Morava $K$-theory collapses. For a precise definition see~\cite[Definition 6.2.3]{ravenel:1992}. Proposition~\ref{proposition: strong type k} summarizes the relevant facts about complexes that are strongly of type $k$. Recall that the Adams spectral sequence (ASS) has the following form, where $E$ and $F$ are spectra
\[ \operatorname{Ext}_{\mathcal A}^{s,t}(H^*(F), H^*(E))\Rightarrow \pi_{t-s}(\operatorname{Map}(E, F)^{\wedge}_p). \]

\begin{proposition}\cite[Sections 6.2--6.4]{ravenel:1992} \label{proposition: strong type k}
For every $k$ there exists a complex strongly of type $k$. Let $V_k$ be such a complex. After some suspension, $V_k$ has a $v_k$-periodic self map $\Sigma^{d|v_k|} V_k \to V_k$ whose stabilization is represented in the second page of the ASS by an element of $\operatorname{Ext}_{\mathcal A}^{d,d|v_k|+d}(H^*(V_k), H^*(V_k))$. In particular, the self-map has Adams filtration $d$.
\end{proposition}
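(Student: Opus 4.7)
The plan is to build $V_k$ inductively as an iterated Toda--Smith cofibre and then read the Adams filtration of a $v_k$-self map off the $BP$-freeness forced by the strong type condition. Start with $V_0=S^0$, which is strongly of type $0$, and assume by induction that $V_{k-1}$ is strongly of type $k-1$ and carries a $v_{k-1}$-self map $f_{k-1}\colon \Sigma^{d_{k-1}|v_{k-1}|}V_{k-1}\to V_{k-1}$. Set $V_k$ to be the cofibre of $f_{k-1}$. The Morava $K$-theory long exact sequences immediately give $K(n)_*V_k=0$ for $n<k$ and $K(k)_*V_k\ne 0$, so $V_k$ has type $k$, and the fact that $f_{k-1}$ was $v_{k-1}$-periodic lets one inductively verify both the Steenrod-algebra freeness condition and the collapse of the Atiyah--Hirzebruch spectral sequence for $K(n)$ that together make $V_k$ strongly of type $k$ in the sense of~\cite[Definition 6.2.3]{ravenel:1992}. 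Existence of a $v_k$-self map $f_k$ on the resulting $V_k$ is then supplied by the Periodicity Theorem~\cite{devinatz/hopkins/smith:1988}.

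To identify the Adams filtration of $f_k$, I would pass to the Adams--Novikov side. For a complex strongly of type $k$, $BP_*V_k$ is a finitely generated free module over $BP_*/I_k=\mathbb{F}_p[v_k,v_{k+1},\ldots]$, so $f_k$ acts on $BP_*V_k$ as multiplication by $v_k^d$ for some $d\ge 1$. In the cobar complex for $BP$ the element $v_k$ has cohomological filtration $1$, hence $v_k^d$ has filtration $d$, living in Adams--Novikov bidegree $(d,d|v_k|+d)$. The strong type $k$ hypothesis is precisely what one needs for the change-of-rings / small-descent spectral sequence relating the classical Adams $E_2$ to the Adams--Novikov $E_2$ to behave well in this bidegree, and it allows one to lift the class $v_k^d$ to a non-zero element of $\operatorname{Ext}_{\mathcal{A}}^{d,d|v_k|+d}(H^*V_k,H^*V_k)$ detecting the stabilisation of $f_k$.

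The hardest step, and the one where the strong type hypothesis really earns its keep, is the last: showing that the Adams filtration of $f_k$ is \emph{exactly} $d$ rather than merely $\ge d$. For the upper bound one needs the $BP$-class $v_k^d$ to persist as a non-zero permanent cycle in the classical Adams spectral sequence, not just in the Adams--Novikov one, and this is where the Steenrod-algebra freeness (to enable the change-of-rings identification) and the AHSS collapse (to make the $BP$-detection faithful) both enter crucially. With these in place the filtration claim reduces to a direct reading of the $\operatorname{Ext}$-bigrading of $v_k^d$ in the cobar complex, which matches $(d,d|v_k|+d)$ because $|v_k|=2p^k-2$ contributes to the internal degree while each factor of $v_k$ contributes one to the cohomological degree.
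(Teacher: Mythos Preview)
The paper does not give a proof of this proposition; it is quoted as a result from Ravenel's book, so there is no argument here to compare yours against. That said, your outline has two genuine gaps.

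First, the inductive cofibre construction does not work. The complexes you are building are the Smith--Toda complexes $V(k)$, and these famously fail to exist for all $k$ at small primes: already $V(2)$ does not exist at $p=2$, and $V(4)$ does not exist at $p=3$. The existence statement ``for every $k$'' therefore cannot be proved this way; the argument in Ravenel (due to Jeff Smith) instead produces type $k$ complexes via idempotents on smash powers together with the thick subcategory theorem, and is substantially more involved. Even when your cofibres do exist, you do not actually check the ``strongly of type $k$'' conditions: neither the required freeness over a subalgebra of $\mathcal A$ nor the AHSS collapse is an automatic consequence of taking the cofibre of a $v_{k-1}$-self map.

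Second, the Adams-filtration argument is confused. You claim that $v_k$ has cohomological filtration $1$ in the $BP$ cobar complex, but for a type $k$ complex $v_k$ is invariant modulo $I_k$ and hence acts as a comodule primitive on $BP_*V_k$; multiplication by $v_k^d$ therefore sits in Adams--Novikov filtration $0$, not $d$. The proposition is about the \emph{classical} mod $p$ Adams spectral sequence, where the filtration-$1$ class detecting $v_k$ arises from the Milnor primitive $Q_k$ in the cobar complex for $\mathcal A$, not from anything on the $BP$ side. The change-of-rings comparison you invoke does exist, but it does not convert ANSS filtration $0$ into Adams filtration $d$ in the way your sketch suggests.
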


It is often convenient to bigrade the Adams spectral sequence by $(t-s, s)$, so that the horizontal axis corresponds to the topological degree and the vertical axis is the Adams filtration. Vanishing lines in the spectral sequence are calculated in these coordinates. Thus a vanishing line gives an upper bound on the possible Adams filtration of a non-zero element of the homotopy groups of a spectrum in terms of its topological dimension. If one uses the grading $(t-s, s)$ then Proposition~\ref{proposition: strong type k} says the self map of $V_k$ is represented by multiplication by an element on a line of slope $\frac{1}{|v_k|}$ and intercept zero.

Let $\operatorname{End}(\Sigma^\infty V_k)\simeq \Sigma^\infty V_k\wedge D(\Sigma^\infty V_k)$ be the endomorphism spectrum of $\Sigma^\infty V_k$. The self map $\Sigma^{d|v_k|} V_k \to V_k$ gives rise to an element of $\pi_{d|v_k|}(\operatorname{End}(\Sigma^\infty V_k))$. Proposition~\ref{proposition: strong type k} says that this element has Adams filtration $d$. It follows that if $E$ is any spectrum, then the induced map $E^{V_k} \to \Omega^{d|v_k|}E^{V_k}$ also has Adams filtration at least $d$. In fact, a slightly stronger statement is true: the self map of $E^{V_k}$ is represented in ASS by an operation that raises topological degree by $d|v_k|$ and raises Adams filtration by $d$. In other words, it moves elements in the ASS for $E^{V_k}$ along a line of slope $\frac{1}{|v_k|}$. It follows that if the ASS for $E^{V_k}$ has a vanishing line of slope smaller than $\frac{1}{|v_k|}$ then the action of the self map of $V_k$ on the homotopy groups of $E^{V_k}$ is nilpotent.

It turns out that layers in the Goodwillie tower of the identity have good vanishing lines. So now it is time to get back to the Goodwillie tower.

We remind the reader that $X$ is an odd sphere and everything is localised at a prime $p$. By Theorem~\ref{theorem: primary}, the only non-trivial layers of the Goowillie tower of the identity at $X$ are the ones indexed by powers of $p$. Their underlying spectra have the form $(\partial_{p^k}I \wedge X^{\wedge p^k})_{h\Sigma_{p^k}}$. It turns out that these spectra have interesting properties in $v_k$-periodic homotopy. Our way to see it is via their cohomology with the Steenrod action. For an integer $k\ge 0$ let $A_k$ be the subalgebra of the Steenrod algebra generated by $\{Sq^1, Sq^2, Sq^4, \ldots, Sq^{2^k}\}$ for $p=2$ and by $\{\beta, P^1, P^p, \ldots, P^{p^{k-1}}\}$ for $p>2$.
\begin{proposition}\label{proposition: A_k free}
The cohomology of $(\partial_{p^k}I \wedge X^{\wedge p^k})_{h\Sigma_{p^k}}$ is free over $A_{k-1}$
\end{proposition}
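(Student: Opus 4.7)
The plan is to combine the identification, recalled in the paragraph preceding the proposition, of $(\partial_{p^k}I \wedge X^{\wedge p^k})_{h\Sigma_{p^k}}$ with a Steinberg summand of a classifying-space spectrum, with the classical fact that this Steinberg summand has cohomology free over $A_{k-1}$.

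First, I would invoke the identification stated above: up to a $k$-fold suspension, $(\partial_{p^k}I \wedge X^{\wedge p^k})_{h\Sigma_{p^k}}$ is equivalent to the wedge summand $e_k \cdot \Sigma^\infty X^{\wedge p^k}_{h(\mathbb{Z}/p)^k}$ cut out by the Steinberg idempotent $e_k \in \mathbb{F}_p[\mathrm{GL}_k(\mathbb{F}_p)]$, which acts through the normalizer of the elementary abelian $(\mathbb{Z}/p)^k \subset \Sigma_{p^k}$. Since freeness over $A_{k-1}$ is preserved by suspension, it suffices to establish the freeness for this wedge summand.

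Next, I would unpack the cohomology. Since $X = S^m$ with $m$ odd, $X^{\wedge p^k}$ is a sphere of dimension $mp^k$, and $(\mathbb{Z}/p)^k$ acts on it by permutation of smash factors via the regular representation. This action is trivial on $\tilde{H}^*(X^{\wedge p^k}; \mathbb{F}_p)$: for $p$ odd, the regular representation lands in the alternating group (each non-identity element is a product of $p^{k-1}$ disjoint $p$-cycles, and $p$-cycles are even permutations for $p$ odd), and for $p = 2$ the sign character is invisible mod $2$. A collapsing Serre spectral sequence then identifies $\tilde{H}^*(\Sigma^\infty X^{\wedge p^k}_{h(\mathbb{Z}/p)^k}; \mathbb{F}_p)$, as a $\mathrm{GL}_k(\mathbb{F}_p)$-equivariant module over the Steenrod algebra, with a degree shift of $H^*(B(\mathbb{Z}/p)^k; \mathbb{F}_p)$.

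The proposition then reduces to the classical theorem of Mitchell and Priddy that the image of the Steinberg idempotent on $H^*(B(\mathbb{Z}/p)^k; \mathbb{F}_p)$ is free as a module over the subalgebra $A_{k-1}$ of the Steenrod algebra. The main obstacle is Step 1: making the identification of our spectrum with the Steinberg summand precise enough that the Steenrod algebra action on cohomology can be tracked through it, and keeping the Thom-class twist (from identifying cohomology of a homotopy orbit spectrum with a shift of $H^*B(\mathbb{Z}/p)^k$) compatible with the Steinberg splitting. Once those points are settled, the $A_{k-1}$-freeness is a formal consequence of the Mitchell-Priddy calculation.
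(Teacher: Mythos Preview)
Your overall strategy---identify the spectrum with a Steinberg summand and invoke Mitchell--Priddy---is reasonable and is essentially what the paper's Remark alludes to when it says the proposition can likely be proved by adapting Mitchell's methods. It is, however, genuinely different from the two arguments the paper actually gives: the original brute-force computation of \cite{arone/mahowald:1999}, and Kuhn's argument via the identification with $\mathrm{Sp}^{p^k}(S^0)/\mathrm{Sp}^{p^k-1}(S^0)$ together with Welcher's freeness theorem and a Thom-isomorphism transfer to general odd $X$.

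That said, there is a real gap in your Step~2. You assert that the collapsing Serre spectral sequence identifies $\tilde{H}^*(\Sigma^\infty X^{\wedge p^k}_{h(\mathbb{Z}/p)^k})$ with a degree shift of $H^*(B(\mathbb{Z}/p)^k)$ \emph{as a module over the Steenrod algebra}. This is false: the spectrum in question is the Thom spectrum of $m$ copies of the regular representation, and the Thom isomorphism twists the Steenrod action by the characteristic classes of that bundle. The Thom class here is (a power of) the top Dickson invariant, which is certainly not annihilated by the full Steenrod algebra, so the Steenrod module structure is genuinely different from that of $H^*(B(\mathbb{Z}/p)^k)$ shifted.

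What rescues the argument---and this is precisely the point the paper isolates in Kuhn's approach---is that the Thom class \emph{is} annihilated by the augmentation ideal of $A_{k-1}$, so the Thom isomorphism is an $A_{k-1}$-module isomorphism even though it is not a full $A$-module isomorphism. The paper proves this by identifying the Thom class with a power of the top Dickson invariant and applying the formulas of \cite{wilkerson:1983}. You flag the ``Thom-class twist'' as an obstacle at the end, but your Step~2 as written asserts exactly the thing that fails; you need to replace that claim with the $A_{k-1}$-compatibility statement and supply the Dickson-invariant calculation before Mitchell--Priddy can be invoked.
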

Again, this proposition was first proved in~\cite{arone/mahowald:1999} by a brute force computation. The cohomology of the spectrum $(\partial_{p^k}I \wedge X^{\wedge p^k})_{h\Sigma_{p^k}}$ was calculated explicitly, and it was observed that in the case $X=S^1$ it is isomorphic (up to degree shift) to the cohomology of the quotient of symmetric product spectra
\[ \mathrm{Sp}^{p^k}(S^0)/\mathrm{Sp}^{p^k-1}(S^0). \]
The cohomology of the latter is $A_{k-1}$-free by a theorem of Welcher~\cite{welcher:1981}, and Welcher's argument can be adapted to the case of a more general $X$.

It was pointed out to us by Nick Kuhn that there is a more direct way to deduce proposition~\ref{proposition: A_k free} from Welcher's result. We know from~\cite{arone/dwyer:2001} that the spectrum $(\partial_{p^k}I \wedge S^{p^k})_{h\Sigma_{p^k}}$ is  homotopy equivalent (up to a suspension) to
\[ \mathrm{Sp}^{p^k}(S^0)/\mathrm{Sp}^{p^k-1}(S^0), \]
and for a general odd sphere $X$, the spectrum $(\partial_{p^k}I \wedge X^{\wedge p^k})_{h\Sigma_{p^k}}$ is (roughly speaking) a Thom spectrum over the case $X=S^1$. Thus there is a Thom isomorphism between the cohomologies of the two spectra:
\[ H^*\left(\left(\partial_{p^k}I \wedge S^{p^k}\right)_{h\Sigma_{p^k}}\right)\cong H^{*+2lp^k}\left(\left(\partial_{p^k}I \wedge S^{(2l+1)p^k}\right)_{h\Sigma_{p^k}}\right) \]
Furthermore, one can show that $A_{k-1}$ acts trivially on the Thom class. This can be done by identifying the Thom class with a power of the top Dickson invariant, and using the formulas in~\cite{wilkerson:1983}. From here it follows that in our case the Thom isomorphism respects the $A_{k-1}$-module structure.

\begin{remark}
It seems likely that Proposition~\ref{proposition: A_k free} can also be proved by adapting the methods of Steve Mitchell~\cite{mitchell:1985}.
\end{remark}
Proposition~\ref{proposition: A_k free} has consequences regarding the $v_k$-periodic homotopy thanks to the following theorem, due to Anderson-Davis~\cite{anderson/davis:1973} for $p=2$ and Miller-Wilkerson~\cite{miller/wilkerson:1981} for $p>2$.
\begin{theorem}\label{theorem: vanishing lines}
Let $M$ be a connected $A$-module that is free over $A_{k}$. Then $\operatorname{Ext}_A(M, {\mathbb F}_p)$ has a vanishing line of slope that is strictly smaller than $\frac{1}{|v_{k}|}$ and an intercept that is bounded above by a number that depends only on $k$. This bound is a slowly growing function of $k$.
\end{theorem}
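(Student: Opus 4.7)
My plan is to reduce via a change-of-rings isomorphism from $\operatorname{Ext}_A$ to $\operatorname{Ext}$ over the quotient Hopf algebra $A/\!/A_k := A \otimes_{A_k} \mathbb{F}_p$, and then to extract the vanishing line from the low-degree structure of $A/\!/A_k$, following the strategy of Anderson-Davis at $p=2$ and of Miller-Wilkerson at odd primes.

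The first step is the Cartan-Eilenberg change-of-rings spectral sequence
\[ E_2^{p,q} = \operatorname{Ext}^p_{A/\!/A_k}\bigl(\mathbb{F}_p,\; \operatorname{Ext}^q_{A_k}(M,\mathbb{F}_p)\bigr) \;\Longrightarrow\; \operatorname{Ext}^{p+q}_A(M,\mathbb{F}_p), \]
which is available because $A$ is free over the sub-Hopf algebra $A_k$ by Milnor-Moore. Since $M$ is $A_k$-free, the inner Ext vanishes for $q>0$ and the spectral sequence collapses, giving
\[ \operatorname{Ext}^{s,t}_A(M,\mathbb{F}_p) \;\cong\; \operatorname{Ext}^{s,t}_{A/\!/A_k}\bigl(\mathbb{F}_p,\; \operatorname{Hom}_{A_k}(M,\mathbb{F}_p)\bigr). \]
Because $M$ is connected, the coefficient module is bounded above, so any vanishing line for $\operatorname{Ext}_{A/\!/A_k}(\mathbb{F}_p,\mathbb{F}_p)$ with slope $m$ and intercept $c$ pulls back to one of the same slope on the left, with the intercept shifted only by a constant depending on $k$ and the connectivity of $M$.

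The task then reduces to producing a vanishing line of slope strictly less than $1/|v_k|$ for $\operatorname{Ext}_{A/\!/A_k}^{*,*}(\mathbb{F}_p,\mathbb{F}_p)$. At $p=2$ the dual Hopf algebra $(A/\!/A_k)^*$ is the polynomial subalgebra of the dual Steenrod algebra
\[ \mathbb{F}_2\bigl[\xi_1^{2^{k+1}},\, \xi_2^{2^k},\, \ldots,\, \xi_{k+1}^{2},\, \xi_{k+2},\, \xi_{k+3},\, \ldots\bigr], \]
with a parallel description at odd primes involving both the $\xi_i$ and the $\tau_i$. At $p=2$ the generator of smallest positive topological degree is $\xi_1^{2^{k+1}}$ in degree $2^{k+1}$, so its dual cobar class $h_{1,k+1} \in \operatorname{Ext}^{1,2^{k+1}}_{A/\!/A_k}(\mathbb{F}_2,\mathbb{F}_2)$ lies at slope $s/(t-s) = 1/(2^{k+1}-1)$, strictly smaller than $1/(2^{k+1}-2) = 1/|v_k|$. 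All other indecomposable generators have strictly larger topological degree and hence give $\operatorname{Ext}^1$-classes of strictly smaller slope; the odd-primary analysis is analogous.

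The main obstacle --- and the substantive content of Anderson-Davis and Miller-Wilkerson --- is propagating this $\operatorname{Ext}^1$ bound to the entire bigraded $\operatorname{Ext}^{*,*}$. I would attack this via the May spectral sequence associated to the coradical (length) filtration on the cobar complex of $(A/\!/A_k)^*$: on the $E_1$-page one obtains a primitively generated bigraded Hopf algebra whose multiplicative generators all satisfy the desired slope bound. Since slope $s/(t-s)$ is sub-additive under multiplication, polynomial products of generators automatically respect the bound, and a check that higher May differentials (built from the same generators) cannot increase the slope completes the argument. The intercept of the vanishing line is then read off from the finite initial segment of low-slope generators, producing an upper bound that depends only on $k$ and grows at most polynomially in $k$ --- in particular ``slowly''.
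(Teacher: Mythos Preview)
The paper does not prove this theorem at all: it is quoted as a black box, with attribution to Anderson--Davis (for $p=2$) and Miller--Wilkerson (for $p>2$), and then immediately applied. So there is no ``paper's own proof'' to compare against; what you have written is an outline of the argument in those cited references rather than something the survey itself supplies.

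As a sketch of the Anderson--Davis/Miller--Wilkerson argument, your outline is broadly on target: the change-of-rings reduction to $\operatorname{Ext}_{A/\!/A_k}(\mathbb{F}_p,\mathbb{F}_p)$ is exactly the first move, and identifying the lowest-degree generator of $(A/\!/A_k)^*$ to locate the critical slope is correct. Two places deserve tightening. First, your May-spectral-sequence step is the heart of the matter and is not as automatic as you make it sound: one must verify that \emph{every} generator $h_{i,j}$ appearing in the relevant $E_1$-page satisfies the slope bound (not just the minimal one), and then the intercept estimate comes from a finite check of low-dimensional exceptions, not from ``sub-additivity'' alone. Second, at odd primes the presence of the exterior generators $\tau_i$ genuinely complicates the bookkeeping, and ``the odd-primary analysis is analogous'' undersells the work Miller--Wilkerson actually do. Finally, the phrase ``slowly growing function of $k$'' in the statement is informal in the paper too; you should not expect to extract a clean closed form for the intercept, only a bound.
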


\begin{corollary}\label{corollary: trivial}
Suppose $E$ is a spectrum for which $H^*(E)$ is free over $A_{l-1}$. Then $E$ and $\Omega^\infty E$ have trivial $v_k$-periodic homotopy for $k < l$.
\end{corollary}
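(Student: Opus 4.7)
The plan is to reduce to the statement for $E$ itself via the isomorphism (\ref{equation: loopinfinity}), and then to run a vanishing line argument in the Adams spectral sequence (ASS) for $E^{V_k}$. Fix $k < l$, and let $V_k$ be a complex strongly of type $k$ as furnished by Proposition \ref{proposition: strong type k}, equipped with a $v_k$-self map $f\colon \Sigma^{d|v_k|} V_k \to V_k$ of Adams filtration $d$. By the definition of $v_k^{-1}\pi_*(E; V_k)$ as a mapping telescope, it suffices to show that $f$ acts nilpotently on $\pi_*(E^{V_k})^{\wedge}_p$; the statement for $\Omega^\infty E$ then follows at once from (\ref{equation: loopinfinity}).

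The ASS for $E^{V_k}$ has $E_2$-term $\operatorname{Ext}^{s,t}_{\mathcal A}(H^*(E), H^*(V_k))$, and since $V_k$ is finite this is naturally isomorphic to $\operatorname{Ext}^{s,t}_{\mathcal A}(H^*(E) \otimes H_*(V_k), {\mathbb F}_p)$. The first key step is the standard Hopf-algebra observation that since $A_{l-1}$ is a sub-Hopf algebra of $\mathcal A$, the tensor product of an $A_{l-1}$-free module with any ${\mathbb F}_p$-vector space is again $A_{l-1}$-free. Consequently $H^*(E) \otimes H_*(V_k)$ is $A_{l-1}$-free, and Theorem \ref{theorem: vanishing lines} produces a vanishing line for the $E_2$-page of slope $a$ strictly less than $\frac{1}{|v_{l-1}|}$ and intercept bounded in terms of $l$ alone. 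By Proposition \ref{proposition: strong type k}, $f$ acts on the ASS by the shift $(t-s, s) \mapsto (t-s + d|v_k|, s+d)$, that is, along a line of slope $\frac{1}{|v_k|}$. Since $k \leq l-1$, we have $\frac{1}{|v_k|} \geq \frac{1}{|v_{l-1}|} > a$, so $f$'s slope strictly exceeds the vanishing slope, and an elementary arithmetic check shows that iterating $f$ enough times pushes any fixed class on $E_\infty$ past the vanishing line and hence to zero.

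The principal obstacle is to confirm that the ASS for $E^{V_k}$ converges well enough that nilpotence on $E_\infty$ implies nilpotence on $\pi_*(E^{V_k})^{\wedge}_p$, and that the induced action of $f$ on homotopy is genuinely modelled by the slope-$\frac{1}{|v_k|}$ operation above. Both hold under standard bounded-below, finite-type hypotheses on $E$. This is the step where the effective intercept bound in Theorem \ref{theorem: vanishing lines}, rather than just a generic vanishing line, becomes essential: it guarantees that the number of iterates of $f$ required to annihilate any given class is controlled uniformly by the class's topological degree, so that nilpotence passes cleanly through the spectral sequence to the mapping telescope defining $v_k^{-1}\pi_*(E; V_k)$.
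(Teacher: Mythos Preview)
Your argument is correct and follows the same route as the paper: pick $V_k$ strongly of type $k$, observe that $H^*(E^{V_k})\cong H^*(E)\otimes H_*(V_k)$ is $A_{l-1}$-free (the paper compresses your $\operatorname{Ext}$ manipulation into this single sentence), apply the vanishing-line theorem, and compare slopes to deduce that the self-map acts locally nilpotently on $\pi_*(E^{V_k})$.

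One correction to your final paragraph: the effective intercept bound in Theorem~\ref{theorem: vanishing lines} is \emph{not} needed for this corollary. For a single fixed spectrum $E$ you have a single vanishing line, and any class in a fixed topological degree is pushed past it after finitely many applications of $f$; that is all local nilpotence requires, and local nilpotence already kills the mapping telescope. The uniform intercept bound (depending only on $l$, not on $E$) is what the paper uses later in the proof of Theorem~\ref{thm:vk-periodic}, where one must control infinitely many layers $D_{p^n}I(X)$ simultaneously and compare the growth of the required exponent against the connectivity of the layers. So your instinct that the intercept matters somewhere is right, but it belongs to the convergence argument for the tower, not here.
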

\begin{proof}
It is enough to show that for a complex $V_k$ that is strongly of type $k$, $v_k^{-1}\pi_*(E; V_k)=0$. Since $H^*(E)$ is free over $A_{l-1}$, it follows that $H^*(E^{V_k})$ is free over $A_{l-1}$. By theorem~\ref{theorem: vanishing lines}, the ASS for $\pi_*(E^{V_k})$ has a vanishing line of slope smaller than $\frac{1}{|v_{l-1}|}$, and therefore smaller than $\frac{1}{|v_{k}|}$. By proposition~\ref{proposition: strong type k} and subsequent comments, multiplication by a self map of $V_k$ is represented, on the level of second page of the ASS, by multiplication by an element on a line of slope $\frac{1}{|v_k|}$, which is larger than the slope of the vanishing line. Therefore, every element of $\pi_*(E^{V_k})$ is annihilated by some power of the self-map of $V_k$. It follows that inverting the self map kills everything in the homotopy groups.
\end{proof}
\begin{corollary}\label{corollary: nontrivial layers}
If $X$ is an odd sphere, then the only layers of Goodwillie tower of the identity evaluated at $X$ that are non-trivial in $v_k$-periodic homotopy are $D_1I(X), D_pI(X), D_{p^2}I(X), \ldots, D_{p^k}I(X)$.
\end{corollary}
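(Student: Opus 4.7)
My plan is to simply assemble the pieces that have been carefully set up in the preceding discussion; at this point all the real work has been done and the proof is essentially a bookkeeping exercise.

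First, I would dispose of the layers $D_nI(X)$ with $n$ not a power of $p$. By Theorem~\ref{theorem: primary}, the underlying spectrum $(\partial_nI \wedge X^{\wedge n})_{h\Sigma_n}$ is already contractible in such cases, so the corresponding layer certainly has trivial $v_k$-periodic homotopy. Hence it suffices to consider $n = p^l$ and show that $D_{p^l}I(X)$ has trivial $v_k$-periodic homotopy whenever $l > k$.

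Next, I would invoke Proposition~\ref{proposition: A_k free} to conclude that the mod $p$ cohomology of the spectrum $E_l := (\partial_{p^l}I \wedge X^{\wedge p^l})_{h\Sigma_{p^l}}$ is free over $A_{l-1}$. Under the standing assumption $l > k$, we have $l - 1 \ge k$, so $A_{l-1} \supseteq A_k$ and therefore $H^*(E_l)$ is \emph{a fortiori} free over $A_k$.

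Finally, Corollary~\ref{corollary: trivial} applied to the spectrum $E_l$ (with the role of $l$ in that corollary played by our $l$, so that the required hypothesis $k < l$ is precisely our assumption) gives that $\Omega^\infty E_l$ has trivial $v_k$-periodic homotopy. Since $D_{p^l}I(X) \simeq \Omega^\infty E_l$ by the classification of homogeneous functors (Theorem~\ref{thm:hom} and Example~\ref{example:hom}), this completes the argument.

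There is no real obstacle here: the entire weight of the proof sits in Proposition~\ref{proposition: A_k free} (the $A_{l-1}$-freeness of the cohomology of the layers, deduced from Welcher's calculation via the Thom isomorphism argument sketched above) and in the Anderson--Davis / Miller--Wilkerson vanishing line Theorem~\ref{theorem: vanishing lines}, which feeds into Corollary~\ref{corollary: trivial}. Once those are in hand, matching up the indices $l > k$ with the hypothesis of Corollary~\ref{corollary: trivial} is immediate.
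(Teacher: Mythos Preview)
Your proof is correct and follows essentially the same route as the paper: reduce to prime-power layers via Theorem~\ref{theorem: primary}, invoke Proposition~\ref{proposition: A_k free} for $A_{l-1}$-freeness, and conclude with Corollary~\ref{corollary: trivial}. The only remark is that your intermediate step (``free over $A_{l-1}$, hence free over $A_k$'') is superfluous, since Corollary~\ref{corollary: trivial} already takes $A_{l-1}$-freeness as its hypothesis and gives the conclusion for all $k<l$ directly; you end up applying the corollary with the same $l$ anyway.
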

\begin{proof}
Everything is localized at $p$, and by theorem~\ref{theorem: primary}, $D_{n}I(X)$ is non-trivial at $p$ only if $n$ is a power of $p$. We need to show that if $l>k$ then $D_{p^l}I(X)$ is trivial in $v_k$-periodic homotopy. Recall that
\[ D_{p^l}I(X)\simeq \Omega^\infty\left(\left(\partial_{p^l}I \wedge S^{p^l}\right)_{h\Sigma_{p^l}}\right). \]

By Proposition~\ref{proposition: A_k free}, $H^*\left(\left(\partial_{p^l}I \wedge S^{p^l}\right)_{h\Sigma_{p^l}}\right)$ is free over ${\mathcal A}_{l-1}$. The result follows by corollary~\ref{corollary: trivial}.
\end{proof}

Corollary~\ref{corollary: nontrivial layers} suggests that the map $X\to P_{p^k}I(X)$ induces an isomorphism on $v_k$-periodic homotopy groups. This conclusion is not automatic, because inverting $v_k$ does not always commute with inverse homotopy limits, but it turns out to be true in this case. So we can now sketch the proof of Theorem~\ref{thm:vk-periodic}.

\begin{proof}[Proof of Theorem~\ref{thm:vk-periodic}]
Let $k$ be fixed. Let $F_k(X)$ be the homotopy fibre of the map $X\to P_{p^k}I(X)$. For $n\ge k$ let us define $\overline{P_{p^n}}(X)$ to be the homotopy fibre of the map $P_{p^n}I(X)\to P_{p^k}I(X)$. Note that the homotopy fibre of the map $\overline{P_{p^n}}(X) \to \overline{P_{p^{n-1}}}(X)$ is $D_{p^n}I(X)$. Choose a complex $V_k$ that is strongly of type $k$ with a self map $\nu_k\colon \Sigma^{d|v_k|} V_k \to V_k$ of Adams filtration $d$, as per proposition~\ref{proposition: strong type k}. There is a tower of the following form, where $F_k(X)^{V_k}$ is the homotopy inverse limit of the top row:
\[ \begin{diagram} \dgARROWLENGTH=1em
  \node{F_k(X)^{V_k} \cdots} \arrow{e} \node{\overline{P_{p^n}}(X)^{V_k}} \arrow{e} \node{\overline{P_{p^{n-1}}}(X)^{V_k}} \arrow{e} \node{\cdots} \arrow{e} \node{\overline{P_{p^{k+1}}}(X)^{V_k}} \\
  \node[2]{D_{p^n}I(X)^{V_k}} \arrow{n} \node{D_{p^{n-1}}I(X)^{V_k}} \arrow{n} \node[2]{D_{p^{k+1}}I(X)^{V_k}.} \arrow{n,r}{\sim}
\end{diagram} \]
We need to prove that $v_k^{-1}\pi_*(F_k; V_k)$ is zero. Let $\alpha\in\pi_*(F_k^{V_k})$. We need to show that some power of the self map $\nu_k$ annihilates $\alpha$. Let us say that $\alpha$ has height $\ge m$ if the image of $\alpha$ in $\pi_*\left(\overline{P_{p^n}}^{V_k}\right)$ is zero for $n<m$. Suppose $\alpha$ has height $\ge m$. Then the image of $\alpha$ in $\pi_*\left(\overline{P_{p^m}}^{V_k}\right)$ is in the image of some element of $\pi_*\left(D_{p^m}I(X)^{V_k}\right)$. We know that $\nu_k$ acts along a line of higher slope than the vanishing line of the ASS for the underlying spectrum of $D_{p^m}I(X)^{V_k}$. Because of this, some power $\nu_k^l$ of $\nu_k$ annihilates this element of $\pi_*\left(D_{p^m}I(X)^{V_k}\right)$. This means that $v_k^l(\alpha)$ has height $\ge m+1$. Repeating the process, one finds that for every $n>k$, there exists some $l_n$ for which $v_k^{l_n}$ has height $\ge n$. Let $l_n$ be the lowest such $l$. Using Theorem~\ref{theorem: vanishing lines}, it is not very difficult to write down an effective upper bound on the topological dimension of $\nu_k^{l_n}(\alpha)$ as a function of $n$. Crucially, it is not very difficult to show that this function grows slower than the connectivity of $D_{p^n}I(X)^{V_k}$ (the calculation is done in~\cite{arone/mahowald:1999}). It follows that there exists an $n$ for which $\nu_k^{l_n}(\alpha)$ has lower topological dimension than the connectivity of $D_{p^n}I(X)^{V_k}$. It follows that the image of $\nu_k^{l_n}(\alpha)$ in $\pi_*\left(\overline{P_{p^m}}^{V_k}\right)$ is zero for all $m$. It follows that $\nu_k^{l_n}(\alpha)=0$.
\end{proof}
Theorem~\ref{thm:vk-periodic} says that the unstable $v_k$-periodic homotopy type of an odd sphere can be resolved into ($k+1$) stable homotopy types. The case $k=0$ is essentially~\ref{corollary: serre}.

\subsection*{A Bousfield-Kuhn functor reformulation}
Theorem~\ref{thm:vk-periodic} has a more modern reformulation in terms of the Bousfield-Kuhn functor. Let us recall what this functor is. Choose a $v_k$ self map $\Sigma^{d|v_k|}V_k \to V_k$. Use it to form a direct system of spectra
\[ \Sigma^{\infty} V_k \to \Omega^{d|v_k|}\Sigma^{\infty} V_k \to \cdots . \]
Let $T(k)$ be the homotopy colimit of this system, and $L_{T(k)}$ be the Bousfield localization with respect to $T(k)$. It follows from the uniqueness part of the Periodicity Theorem that the functor $L_{T(k)}$ does not depend on the choice of $V_k$ or the self map.

The Bousfield-Kuhn functor $\Phi_k$ is a functor from pointed spaces to spectra, whose main property is that there is an equivalence $\Phi_k(\Omega^\infty E)\simeq L_{T(k)}(E)$. Here $E$ is any spectrum, the equivalence is natural in $E$. The functor $\Phi_k$ is constructed as an inverse homotopy limit
\[ \Phi_k(X)=\operatorname{holim}_{\alpha}v_k^{-1} X^{V_k^\alpha}, \]
where $\{V_k^\alpha\}$ is a direct system of complexes of type $k$ with certain properties. See~\cite{kuhn:2008} for more details. The following is now an immediate corollary of Theorem~\ref{thm:vk-periodic}.
\begin{theorem}\label{theorem: bousfield-kuhn}
When $X$ is an odd sphere, the map $X\to P_{p^k}I(X)$ becomes an equivalence after applying the Bousfield-Kuhn functor $\Phi_k$.
\end{theorem}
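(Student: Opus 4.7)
The plan is that this should follow directly from Theorem~\ref{thm:vk-periodic} together with the explicit formula $\Phi_k(Z) = \operatorname{holim}_\alpha v_k^{-1} Z^{V_k^\alpha}$ recalled just above the statement. First I would observe that, by the definition of the $v_k$-periodic homotopy groups given earlier, each term $v_k^{-1} Z^{V_k^\alpha}$ is an infinite loop space whose homotopy groups are precisely $v_k^{-1}\pi_*(Z; V_k^\alpha)$. Consequently a map $f\colon Z\to Z'$ of pointed spaces induces a weak equivalence of infinite loop spaces $v_k^{-1} Z^{V_k^\alpha}\to v_k^{-1} (Z')^{V_k^\alpha}$ precisely when it induces an isomorphism on $v_k^{-1}\pi_*(-;V_k^\alpha)$. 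Since homotopy inverse limits preserve levelwise equivalences, it is then enough to check that $X\to P_{p^k}I(X)$ induces an isomorphism on $v_k^{-1}\pi_*(-; V_k^\alpha)$ for every index $\alpha$ in the direct system used to define $\Phi_k$.

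At this point the conclusion is immediate. Theorem~\ref{thm:vk-periodic} asserts exactly that $X\to P_{p^k}I(X)$ is a $v_k$-equivalence, and Proposition~\ref{proposition: v_k independent} tells us that this condition is independent of the particular finite type-$k$ complex chosen. Hence the required isomorphism on $v_k^{-1}\pi_*(-;V_k^\alpha)$ holds for every $\alpha$, and passing to the homotopy limit yields the desired equivalence $\Phi_k(X)\simeq \Phi_k(P_{p^k}I(X))$.

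There is no substantive obstacle to overcome here: all of the genuinely difficult content, namely the vanishing-line arguments for the layers and the careful bookkeeping of Adams filtrations and connectivities, has already been carried out in Theorem~\ref{thm:vk-periodic}. The only thing the present corollary contributes is the translation between the $v_k$-periodic homotopy groups and the Bousfield--Kuhn functor, which is essentially built into Kuhn's construction of $\Phi_k$ and formally encoded by Proposition~\ref{proposition: v_k independent}.
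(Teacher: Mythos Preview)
Your argument is correct and is exactly the intended one: the paper itself gives no proof beyond declaring the theorem an ``immediate corollary'' of Theorem~\ref{thm:vk-periodic}, and your proposal simply unpacks why, using the holim description of $\Phi_k$ together with Proposition~\ref{proposition: v_k independent}. There is nothing to add.
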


\subsection*{The case of even-dimensional spheres, and beyond} There is a version of Theorems~\ref{thm:vk-periodic} and~\ref{theorem: bousfield-kuhn} that holds for even-dimensional spheres:
\begin{theorem}\cite[Theorems 4.4 and 4.5]{arone/mahowald:1999}  \label{theorem: even sphere}
Fix a prime $p$ and localize everything at $p$. Let $X$ be an even-dimensional sphere. Then $D_nI(X)\simeq *$ unless $n$ is a power of $p$ or twice a power of $p$. The map
\[ X\to P_{2p^k}I(X) \]
is a $v_k$-periodic equivalence.
\end{theorem}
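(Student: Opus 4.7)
The plan is to mimic the proof of Theorem~\ref{thm:vk-periodic} step-by-step, replacing the index set of prime-power layers $\{p^k\}$ with the enlarged set $\{p^k, 2p^k\}$ appropriate for the even-dimensional case. The main structural input that must be re-derived is the homological/Steenrod-algebra behaviour of the layers $D_nI(X)$ when $X = S^{2m}$; once this is in hand, the tower argument given at the end of Section~\ref{sec:id} applies essentially verbatim.

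The first step is to identify the non-trivial layers. For $X = S^d$ the action of $\Sigma_n$ on the smash power $X^{\wedge n}\simeq S^{dn}$ is by the $d$-th power of the sign representation: trivial when $d$ is even and the sign representation when $d$ is odd. The computation underlying Theorem~\ref{theorem: primary} in the odd case combined the sign twist with the equivariant structure of $T_n$ (via its isotropy groups) to force non-vanishing mod $p$ homology only at $n = p^k$. Performing the same isotropy analysis with the trivial twist relaxes the constraint in exactly one direction: one picks up contributions from subgroups of $\Sigma_n$ of order $2p^k$ as well as $p^k$. This is the source of the extra layers at $n = 2p^k$, and it establishes the first assertion of the theorem.

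Second, I would verify the analogue of Proposition~\ref{proposition: A_k free}: for $X = S^{2m}$ the cohomology of $(\partial_{p^k}I\wedge X^{\wedge p^k})_{h\Sigma_{p^k}}$ and of $(\partial_{2p^k}I\wedge X^{\wedge 2p^k})_{h\Sigma_{2p^k}}$ is free over $A_{k-1}$. For the first family the Thom-isomorphism argument of Kuhn reviewed in the text reduces this, via the $p$-local identification with Steinberg summands of $(B(\mathbb{Z}/p)^k)^{\wedge p^k}$-type spectra, to Welcher's symmetric-power calculation; one just needs to check that the Thom class for the relevant bundle over the $X = S^2$ case carries trivial $A_{k-1}$-action, which again reduces to the Dickson-invariant computation via~\cite{wilkerson:1983}. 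For the $n = 2p^k$ family, an analogous Thom argument reduces to the case $X=S^2$, and the freeness there can be deduced from Welcher's results applied to the order-$2p^k$ isotropy, or from a direct adaptation of Mitchell's methods. Third, Theorem~\ref{theorem: vanishing lines} then yields a vanishing line of slope strictly less than $1/|v_k|$ for the Adams spectral sequence of each of these layers smashed with a $V_k$ that is strongly of type $k$, and Corollary~\ref{corollary: trivial} yields the $v_k$-periodic triviality of $D_{p^l}I(X)$ and $D_{2p^l}I(X)$ for $l > k$. Finally, setting $F_k(X)$ to be the homotopy fibre of $X\to P_{2p^k}I(X)$ and running the filtration-by-height argument from the proof of Theorem~\ref{thm:vk-periodic}, one compares the growth of the topological dimension of $\nu_k^{l_n}(\alpha)$ against the growing connectivity of $D_{n}I(X)^{V_k}$ (for $n = p^l$ or $2p^l$ with $l > k$), and concludes $v_k^{-1}\pi_*(F_k(X);V_k) = 0$.

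The main obstacle will be the first step: verifying that the combination of the trivial $\Sigma_n$-action on $X^{\wedge n}$ with the equivariant structure on $\partial_n I \simeq \mathbb{D}(T_n)$ truly forces the non-trivial mod $p$ homology to concentrate at precisely $n = p^k$ and $n = 2p^k$, and no other indices. This requires a careful bookkeeping of the fixed points of subgroups of $\Sigma_n$ acting on $T_n$, refining the spectral-sequence argument sketched for Theorem~\ref{thereom: rational}. Once this enumeration is correct, both the $A_{k-1}$-freeness and the tower argument parallel the odd case closely, and the only change in the final vanishing-line/height comparison is the numerical one induced by replacing $p^k$ with $2p^k$ as the stopping point of the Goodwillie tower.
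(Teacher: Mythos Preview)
Your proposal takes a genuinely different route from the paper. The paper does \emph{not} redo the odd-sphere analysis for even spheres; instead it uses the EHP fibration sequence
\[
X \longrightarrow \Omega\Sigma X \longrightarrow \Omega\Sigma(X\wedge X),
\]
valid when $X$ is an odd-dimensional sphere, and observes that it induces a fibration sequence of Taylor towers. Since $\Sigma X$ is then even-dimensional and $\Sigma(X\wedge X)$ is odd-dimensional, this expresses the tower of the identity at an even sphere as an extension of two towers at odd spheres (with the second reindexed by a factor of $2$, via the quadratic functor $X\mapsto X\wedge X$). Both assertions of the theorem then follow immediately from Theorems~\ref{theorem: primary} and~\ref{thm:vk-periodic}: the non-trivial layers occur at $p^k$ (from the $X$ factor) and $2p^k$ (from the $X\wedge X$ factor), and the $v_k$-periodic equivalence at stage $2p^k$ is a two-out-of-three consequence.

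Your direct approach is in principle viable---it is closer in spirit to what the cited paper~\cite{arone/mahowald:1999} actually does---but it is substantially more work, and your sketch of the first step is not quite right. The appearance of the indices $2p^k$ is not really about ``subgroups of $\Sigma_n$ of order $2p^k$''; rather, removing the sign twist changes which isotropy contributions survive in the homology of $(\mathbb{D}T_n)_{h\Sigma_n}$, and the precise bookkeeping (e.g.\ via the Arone--Dwyer identification with Steinberg-type summands, or via direct homology computation) is more delicate than you indicate. Similarly, your proposed $A_{k-1}$-freeness argument for the $n=2p^k$ layers appeals to ``Welcher's results applied to the order-$2p^k$ isotropy,'' which is not a statement that exists in the literature in that form. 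The EHP argument sidesteps all of this: it buys you the even case essentially for free once the odd case is in hand, at the cost of one clean observation about how Taylor towers interact with the EHP fibration.
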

The easiest way to prove this theorem that we know is to use the EHP sequence.
\[ X \mapsto \Omega\Sigma X \to \Omega\Sigma (X\wedge X). \]
This is a fibration sequence if $X$ is an odd-dimensional sphere, and one can show that it induces a fibration sequence of Taylor towers. Given this, it is easy to deduce Theorem~\ref{theorem: even sphere} from Theorem~\ref{thm:vk-periodic}.
\begin{remark} The connection between the Goodwillie tower and the EHP sequence was investigated much more deeply by M. Behrens at~\cite{behrens:2012}.
\end{remark}

Theorems~\ref{thm:vk-periodic} and~\ref{theorem: even sphere} tell us that the behavior of the Taylor tower of the identity in $v_k$-periodic homotopy has two non-obvious properties when evaluated at spheres:
\begin{enumerate}
\item It is finite. \label{finite}
\item It converges. \label{converges}
\end{enumerate}
It is therefore reasonable to ask if there exist other spaces for which the Taylor tower of the identity has these properties. Regarding the finiteness property, it seems clear that spheres (or at best homology spheres) are the only spaces for which the Taylor tower is finite in $v_k$-periodic homotopy. For example, it is easy to show that the only spaces for which the tower is rationally finite are rational homology spheres. On the other hand, there do exist other spaces for which the tower converges in $v_k$-periodic homotopy. Such results were obtained by Behrens-Rezk~\cite[section 8]{behrens/rezk:2017} and by Heuts~\cite{heuts:2018a}. For example, convergence holds for products of spheres and special unitary groups $SU(k)$. It would be interesting to characterize the spaces for which $v_k$-periodic convergence holds.

\subsection*{The case $X=S^1$. Theorems of Behrens and Kuhn} We noted above that there is a relationship between the layers of the Taylor tower of the identity evaluated at $S^1$, and the subquotients of the filtration of the Eilenberg-MacLane spectrum $H{\mathbb Z}=\mathrm{Sp}^\infty(S^0)$ by the symmetric powers of the sphere spectrum. More precisely, if $n$ is not a power prime, then both layers are trivial, and if $n=p^k$ then there is an equivalence
\[
D_{p^k}I(S^1)\simeq \Omega^{\infty+2k-1}\mathrm{Sp}^{p^k}(S^0)/\mathrm{Sp}^{p^k-1}(S^0).
\]
In fact, there is a deeper connection between the two filtrations. Roughly speaking, one can recast each filtration as a ``chain complex'' of infinite loop spaces or spectra, and each chain complex can serve as a kind of contracting homotopy for the other one. This  implies, in particular, that each filtration is trivial in the sense that its homotopy spectral sequence collapses at the second page. The triviality result for the symmetric powers filtration used to be known as the Whitehead conjecture. It was proved by Kuhn~\cite{kuhn:1982} at the prime $2$ and by Kuhn and Priddy~\cite{kuhn/priddy:1985} at odd primes. The triviality result for the Goodwillie tower of the identity, and the connection between the two filtrations was proved by Behrens~\cite{behrens:2011} at the prime $2$ and in another way by Kuhn~\cite{kuhn:2015} at all primes.

\section{Operads and Tate Data: the Classification of Taylor Towers} \label{sec:operads}

The layers of a Taylor tower (say, of a functor $F: \mathscr{T}op_* \to \mathscr{T}op_*$) are homogeneous functors classified by the spectra $\partial_*F = (\partial_nF)_{n \geq 1}$, along with the action of $\Sigma_n$ on $\partial_nF$, i.e. a \emph{symmetric sequence} of spectra. As we have seen, however, further information is needed to encode the full Taylor tower, and hence under convergence conditions, the functor $F$ itself.

Roughly speaking, there are two approaches to understanding this extra information: (1) inductive techniques based on the delooped fibre sequences
\[ P_nF \to P_{n-1}F \to \Omega^{-1}D_nF; \]
and (2) analysis of operad/module structures on the symmetric sequence $\partial_*F$ in its entirety, based on a chain rule philosophy for the calculus of functors.

We saw approach (1) applied to $P_2I$ in Example~\ref{ex:id} and, for functors from spectra to spectra, in McCarthy's Theorem (\ref{thm:McCarthy}). Here we focus on (2).

\subsection*{Chain rules in functor calculus}

The first version of a `Chain Rule' for the calculus of functors was proved by Klein and Rognes in \cite{klein/rognes:2002}. The simplest version of this states the following: for reduced functors $F,G: \mathscr{T}op_* \to \mathscr{T}op_*$, we have
\[ \partial_1(FG) \simeq \partial_1F \wedge \partial_1G. \]\index{Chain Rule}
More generally, Klein and Rognes provided a formula for the first derivative of $FG$ at an arbitrary space $X$, in terms of the first derivatives of $G$ (at $X$) and $F$ (at $G(X)$).

For higher derivatives, the Chain Rule for functors of spectra is much simpler than that for spaces. Suppose first that $F,G: {\mathscr{S}p} \to {\mathscr{S}p}$ are given by the formulas
\[ F(X) \simeq \bigvee_{k = 1}^{\infty} (\partial_kF \wedge X^{\wedge k})_{h\Sigma_k}, \quad G(X) \simeq \bigvee_{l = 1}^{\infty} (\partial_lG \wedge X^{\wedge l})_{h\Sigma_l}. \]
A simple calculation then shows that
\[ \partial_n(FG) \simeq \bigvee_{\textrm{partitions of \{1,\dots,n\}}} \partial_kF \wedge \partial_{n_1}G \wedge \dots \wedge \partial_{n_k}G \]
where $n_1,\dots,n_k \geq 1$ are the sizes of the terms in a given partition. This is also the formula for the composition product of symmetric sequences, and, more succinctly, we can write
\begin{equation} \label{eq:chain} \partial_*(FG) \simeq \partial_*F \circ \partial_*G. \end{equation}
The second author proved in \cite{ching:2010} that (\ref{eq:chain}) holds for any reduced functors $F,G: {\mathscr{S}p} \to {\mathscr{S}p}$ where $F$ preserves filtered colimits.

For functors of based spaces (or more general $\infty$-categories), the formula (\ref{eq:chain}) does not hold, but there is nonetheless a natural map
\[ l: \partial_*F \circ \partial_*G \to \partial_*(FG) \]
making $\partial_*$, at least up to homotopy, into a lax monoidal functor from a suitable $\infty$-category of functors to the $\infty$-category of symmetric sequences. Specializing the map $l$ to the case when one or both of $F,G$ is the identity, we obtain a number of important consequences. The following results were proved in~\cite{arone/ching:2011} for functors between based spaces and spectra, and (with the exception of the final statement of the chain rule, though a version of this appears in \cite[6.3]{lurie:2017}) in~\cite{ching:2018} for arbitrary $\infty$-categories.

\begin{theorem} \label{thm:operad}
The derivatives $\partial_*I_{\mathscr{C}}$ of the identity functor on an $\infty$-category $\mathscr{C}$ have a canonical operad structure, and for an arbitrary functor $F: \mathscr{C} \to \mathscr{D}$, the derivatives $\partial_*F$ form a $(\partial_*I_{\mathscr{D}},\partial_*I_{\mathscr{C}})$-bimodule. Moreover, if $F$ preserves filtered colimits and $G: \mathscr{B} \to \mathscr{C}$ is reduced, then there is an equivalence (of bimodules):
\[ \partial_*(FG) \simeq \partial_*F \circ_{\partial_*I_{\mathscr{C}}} \partial_*G \]
where the right-hand side involves a (derived) relative composition product of bimodules over the operad $\partial_*I_{\mathscr{C}}$.
\end{theorem}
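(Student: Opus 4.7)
The plan is to derive all three statements from a single lax monoidal structure on the derivatives functor $F \mapsto \partial_*F$, viewed as going from an $\infty$-category of reduced functors (equipped with composition) to symmetric sequences (equipped with the composition product). The key input is a natural comparison map
\[ l \colon \partial_*F \circ \partial_*G \to \partial_*(FG). \]
I would construct $l$ via cross-effects: $\partial_n F$ is the multilinear part of the $n$-th cross-effect of $F$, and the $n$-th cross-effect of a composite $FG$ carries a natural filtration indexed by partitions of $\{1,\dots,n\}$, whose associated graded at a partition of shape $(n_1,\dots,n_k)$ is built from $\operatorname{cr}_k(F)$ applied to $(\operatorname{cr}_{n_1}(G),\dots,\operatorname{cr}_{n_k}(G))$. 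Extracting multilinear parts then yields $l$, and checking associativity and unitality reduces to routine partition combinatorics.

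Given the lax monoidal structure, the first two statements are formal. Since $I_\mathscr{C}$ is the monoidal unit for composition of endofunctors, $\partial_*I_\mathscr{C}$ inherits a monoid structure for the composition product, which is precisely the definition of an operad. The equivalences $I_\mathscr{D} \circ F \simeq F \simeq F \circ I_\mathscr{C}$ furnish $\partial_*F$ with commuting left $\partial_*I_\mathscr{D}$- and right $\partial_*I_\mathscr{C}$-module structures, and the bimodule axiom follows from associativity of $l$.

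The substance of the theorem is the chain rule equivalence, and my plan is to exhibit both sides as realisations of simplicial symmetric sequences. By definition of the relative composition product, the right-hand side is the realisation of the two-sided bar construction $B_\bullet(\partial_*F, \partial_*I_\mathscr{C}, \partial_*G)$ whose $k$-simplices are $\partial_*F \circ \partial_*I_\mathscr{C}^{\circ k} \circ \partial_*G$. Iterating $l$ and using the canonical equivalences $F \circ I_\mathscr{C}^{\circ k} \circ G \simeq FG$ produces a map of simplicial objects from $B_\bullet$ into the constant simplicial object on $\partial_*(FG)$; what must be shown is that this map induces an equivalence after geometric realisation.

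The hard part is verifying this equivalence, and this is where both hypotheses enter. My strategy is to resolve $G$, via a bar construction over $\partial_*I_\mathscr{C}$, by ``free'' functors for which $\partial_*G$ is a free right $\partial_*I_\mathscr{C}$-module and for which both sides of the comparison can be computed explicitly and identified using the classification of homogeneous layers from Theorem~\ref{thm:hom}. The filtered-colimit hypothesis on $F$ is precisely what lets us commute $\partial_*$ past both the simplicial realisation of $B_\bullet$ and the free resolution of $G$; without it the comparison of realisations could fail even though the levelwise maps are fine. Making these commutation arguments rigorous in the $\infty$-categorical framework of \cite{lurie:2017} and \cite{ching:2018}, and in particular tracking the bimodule structures throughout the resolution, is the main technical obstacle.
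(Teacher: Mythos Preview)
The paper does not actually prove this theorem: it is a survey, and the statement is attributed to \cite{arone/ching:2011} and \cite{ching:2018}. What the paper does provide is exactly the outline you give for the first two parts: it introduces the natural map $l\colon \partial_*F \circ \partial_*G \to \partial_*(FG)$, asserts that it makes $\partial_*$ lax monoidal, and then says that specialising $l$ to the cases where one or both of $F,G$ is the identity yields the operad and bimodule structures. Your derivation of those structures from lax monoidality is precisely this argument.

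For the chain rule itself the paper says nothing beyond the citation, so there is no in-paper proof to compare against. Your bar-construction strategy---realising $\partial_*F \circ_{\partial_*I_{\mathscr{C}}} \partial_*G$ as $|B_\bullet(\partial_*F,\partial_*I_{\mathscr{C}},\partial_*G)|$ and comparing to $\partial_*(FG)$ via a resolution of $G$---is indeed the method used in \cite{arone/ching:2011}, and the role you assign to the filtered-colimit hypothesis is correct. One point worth flagging: your proposed construction of $l$ via a partition-indexed filtration on cross-effects of a composite is reasonable heuristically, but making it rigorous (and coherently associative at the $\infty$-categorical level) is nontrivial; in the cited references $l$ is built by somewhat different means (e.g.\ via models for derivatives that make the composition comparison more direct), and this is one of the places where the actual work lies.
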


\begin{remark}
When $\mathscr{C} = \mathscr{T}op_*$, the $\infty$-category of based spaces, the operad $\partial_*I_{\mathscr{T}op_*}$ from Theorem~\ref{thm:operad} can be viewed as the analogue in stable homotopy theory of the operad encoding the structure of a Lie algebra. This perspective can be justified in a number of ways. Firstly, it follows from Johnson's calculation that taking homology groups of the spectra $\partial_*I_{\mathscr{T}op_*}$ recovers precisely the ordinary Lie operad (up to a shift in degree). A deeper connection is given by viewing $\partial_*I_{\mathscr{T}op_*}$ as an example of bar-cobar (or Koszul) duality for operads of spectra. Ginzburg and Kapranov developed the theory of bar-cobar duality for differential-graded operads in~\cite{ginzburg/kapranov:1994}, and identified the Lie operad as the dual of the commutative cooperad. The following result was proved in~\cite{ching:2005} (or in the given form in~\cite{arone/ching:2011}) and justifies viewing $\partial_*I_{\mathscr{T}op_*}$ as a version of the Lie operad.
\end{remark}

\begin{theorem} \label{thm:koszul}
The operad $\partial_*I_{\mathscr{T}op_*}$ is equivalent to the cobar construction, or (derived) Koszul dual, of the cooperad $\partial_*(\Sigma^\infty \Omega^\infty)$, which itself can be identified with the commutative cooperad of spectra.
\end{theorem}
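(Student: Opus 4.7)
The plan is to identify both sides through an explicit resolution of the identity functor. First I would establish that $\partial_*(\Sigma^\infty\Omega^\infty)$ carries a natural cooperad structure equivalent to the commutative cooperad, then I would exhibit $\partial_*I_{\mathscr{T}op_*}$ as the cobar construction of this cooperad by applying the chain rule to the cosimplicial resolution of the identity arising from the $(\Sigma^\infty,\Omega^\infty)$-adjunction.

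For the first step, note that $\Sigma^\infty\Omega^\infty$ is a comonad on $\mathscr{S}p$. Differentiating its comultiplication $\Sigma^\infty\Omega^\infty \to (\Sigma^\infty\Omega^\infty)^2$ and counit yields, via a cooperadic dual of Theorem~\ref{thm:operad}, a cooperad structure on $\partial_*(\Sigma^\infty\Omega^\infty)$. The Ahearn--Kuhn calculation of Example~\ref{ex:SO} identifies each $\partial_n(\Sigma^\infty\Omega^\infty)$ with $S^0$ carrying trivial $\Sigma_n$-action, and a direct verification that each cocomposition map is the appropriate equivalence between copies of $S^0$ identifies this cooperad with the commutative cooperad of spectra.

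For the second step, I would consider the monadic cosimplicial resolution
\[ I_{\mathscr{T}op_*} \to \Omega^\infty\Sigma^\infty \rightrightarrows (\Omega^\infty\Sigma^\infty)^2 \to \cdots \]
of the identity functor on pointed spaces. Applying the chain rule (Theorem~\ref{thm:operad}) termwise, and using that both $\partial_*\Sigma^\infty$ and $\partial_*\Omega^\infty$ are concentrated in arity one, each $\partial_*((\Omega^\infty\Sigma^\infty)^k)$ unpacks into an iterated composition product involving $k-1$ copies of the cooperad $\partial_*(\Sigma^\infty\Omega^\infty)$. By construction this cosimplicial symmetric sequence is precisely the cobar construction of $\partial_*(\Sigma^\infty\Omega^\infty)$, and its totalization agrees with $\partial_*I_{\mathscr{T}op_*}$; on the level of underlying spectra, this recovers the equivalence $\partial_nI \simeq \mathbb{D}(T_n)$ of Theorem~\ref{theorem: brenda}.

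The hard part will be an interchange of limits: one must verify that $\partial_*$ commutes with the totalization of the cosimplicial object above, which reduces to a convergence statement for the Taylor towers of the iterated compositions $(\Omega^\infty\Sigma^\infty)^k$ and follows from Goodwillie-type connectivity estimates. A more delicate issue is matching the two operad structures: the operad structure on $\partial_*I_{\mathscr{T}op_*}$ from Theorem~\ref{thm:operad} arises from the chain rule applied to $I_{\mathscr{T}op_*}\circ I_{\mathscr{T}op_*}\simeq I_{\mathscr{T}op_*}$, while the cobar construction acquires its operad structure from tree grafting in the cooperadic combinatorics, and a genuine compatibility check is needed to see that these coincide.
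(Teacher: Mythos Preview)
Your proposal is correct and follows essentially the same route the paper sketches in the remark immediately following the theorem: resolve the identity via the cosimplicial object $\Omega^\infty(\Sigma^\infty\Omega^\infty)^\bullet\Sigma^\infty$ coming from the $(\Sigma^\infty,\Omega^\infty)$-adjunction, apply the chain rule for spectra termwise, and identify the result with the cobar construction on the commutative cooperad. You have also correctly flagged the two genuine technical points---commuting $\partial_*$ past the totalization and matching the two operad structures---which the paper alludes to only as ``a little work'' and defers to \cite{ching:2005,arone/ching:2011}.
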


\begin{remark}
Theorem~\ref{thm:koszul} can actually be understood quite easily from the point of view of calculus. For simply-connected $X$, the adjunction $(\Sigma^\infty,\Omega^\infty)$ determines an equivalence
\[ X \; \tilde{\longrightarrow} \; \operatorname{Tot}(\Omega^\infty(\Sigma^\infty \Omega^\infty)^\bullet\Sigma^\infty X) \]
which connects the identity functor to a cobar construction on the comonad $\Sigma^\infty \Omega^\infty$. Taking derivatives of each side, and applying the chain rule for spectra from (\ref{eq:chain}) we recover \ref{thm:koszul} with a little work. Similar arguments were used in \cite{arone/ching:2011} to understand the bimodule structures on the derivatives of functors to/from based spaces.
\end{remark}

\begin{remark} \label{rem:lurie}
A version of Theorem~\ref{thm:koszul} seems likely to be valid for the identity functor on an arbitrary $\mathscr{C}$. Lurie constructs in \cite[6.3.0.14]{lurie:2017} a cooperad that represents the derivatives $\partial_*(\Sigma^\infty_{\mathscr{C}}\Omega^\infty_{\mathscr{C}})$. (In Lurie's language, this object is actually a ``stable corepresentable $\infty$-operad'', but the connection with cooperads is explained in \cite[6.3.0.12]{lurie:2017}.) A similar proof to that of \ref{thm:koszul} should imply that the cobar construction on this cooperad recovers the operad $\partial_*I_{\mathscr{C}}$.
\end{remark}

\subsection*{Heuts's theorem on spectral Lie algebras in chromatic homotopy theory}

The calculations of Arone and Mahowald~\cite{arone/mahowald:1999} described above already show that the Taylor tower of the identity functor on based spaces has interesting structure when viewed through the lens of $v_k$-periodic homotopy theory. Recent work of Heuts~\cite{heuts:2018a} has further developed this connection, taking the Lie operad structure on $\partial_*I_{\mathscr{T}op_*}$ into account.

First recall that one of Quillen's models for rational homotopy theory is in terms of Lie algebras. Specifically, he constructs in~\cite{quillen:1969} a Quillen equivalence between simply-connected rational spaces and $0$-connected differential-graded rational Lie algebras. Heuts's work extends Quillen's to higher chromatic height in the following way.

\begin{theorem}[Heuts] \label{thm:heuts}
Fix a prime $p$ and positive integer $k$. Let $\mathscr{M}^f_k$ denote the $\infty$-category obtained from that of $p$-local based spaces by inverting those maps that induce an equivalence on $v_k$-periodic homotopy groups. Then there is an equivalence of $\infty$-categories
\[ \mathscr{M}^f_k \simeq {\mathscr{S}p}_{T(k)}(\partial_*I_{\mathscr{T}op_*}) \]
between $\mathscr{M}^f_k$ and the category of $T(k)$-local spectra with an algebra structure over the operad $\partial_*I_{\mathscr{T}op_*}$. Moreover, to a space $X$ this equivalence assigns a $\partial_*I_{\mathscr{T}op_*}$-algebra whose underlying spectrum is given by applying the Bousfield-Kuhn functor $\Phi_k$ to $X$.
\end{theorem}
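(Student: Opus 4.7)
My plan is to construct functors in both directions between $\mathscr{M}^f_k$ and ${\mathscr{S}p}_{T(k)}(\partial_*I_{\mathscr{T}op_*})$, then verify they are mutually inverse by combining the $v_k$-periodic convergence results of Section~\ref{sec:id} with the operadic chain rule of Theorem~\ref{thm:operad}.

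First I would construct the forward functor. The Bousfield-Kuhn functor $\Phi_k$ descends from $\mathscr{T}op_*$ to $\mathscr{M}^f_k$ by definition. To lift its target to algebras, I would use that, by Theorem~\ref{thm:operad}, the Taylor tower $(P_n I(X))_{n \ge 1}$ carries compatible operadic data coming from the $(\partial_*I_{\mathscr{T}op_*},\partial_*I_{\mathscr{T}op_*})$-bimodule structure on the derivatives of the identity. Applying $\Phi_k$ to the tower, and noting that $X \to P_{p^k}I(X)$ is a $v_k$-equivalence (Theorem~\ref{thm:vk-periodic}), yields a canonical $\partial_*I_{\mathscr{T}op_*}$-algebra structure on $\Phi_k(X)$, giving the required enhancement $\Phi_k^{\mathrm{Lie}}$.

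Next I would build an inverse functor $\Psi$ by a cobar-type reconstruction. Given an algebra $L$ in ${\mathscr{S}p}_{T(k)}$, form the tower whose $n$-th layer is
\[ \Omega^\infty\bigl((\partial_n I_{\mathscr{T}op_*} \wedge L^{\wedge n})_{h\Sigma_n}\bigr), \]
with attaching maps determined by the operad action of $\partial_*I_{\mathscr{T}op_*}$ on $L$; let $\Psi(L)$ be the homotopy limit of this tower, viewed in $\mathscr{M}^f_k$. One then verifies that (i) the composite $\Psi \circ \Phi_k^{\mathrm{Lie}}$ reproduces the $v_k$-periodic Taylor tower of $X$, whose total object recovers $X$ in $\mathscr{M}^f_k$ by Theorem~\ref{thm:vk-periodic} and Corollary~\ref{corollary: nontrivial layers} (finiteness of the tower in $v_k$-periodic homotopy makes the limit manageable); and (ii) the composite $\Phi_k^{\mathrm{Lie}} \circ \Psi$ recovers $L$ as a Lie algebra, which amounts to computing the derivatives of $\Psi$ via the chain rule and appealing to Koszul self-duality of $\partial_*I_{\mathscr{T}op_*}$ (Theorem~\ref{thm:koszul}) to identify the algebra structure on $\Phi_k(\Psi(L))$ with that on $L$.

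The principal obstacle is step (ii). Unlike the forward direction, which rests on explicit convergence ultimately anchored in the case of odd spheres, this half requires that the bar--cobar adjunction between $\partial_*I_{\mathscr{T}op_*}$-algebras in ${\mathscr{S}p}_{T(k)}$ be an equivalence on \emph{all} algebras, not only those of geometric origin. The key input is the $T(k)$-local vanishing of Tate spectra underlying Kuhn's Theorem~\ref{thm:Kuhn}, which collapses the McCarthy-type square of Theorem~\ref{thm:McCarthy} into an equivalence between homotopy orbits and homotopy fixed points. This Tate vanishing is precisely what turns the Koszul-duality statement into an equivalence at the level of algebras; the remaining work is $\infty$-categorical bookkeeping to promote this algebraic equivalence to one of $\infty$-categories, and to check compatibility of relative composition products along the $T(k)$-localization.
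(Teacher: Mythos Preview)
The paper is a survey and does not supply its own proof of Theorem~\ref{thm:heuts}; it is quoted from Heuts~\cite{heuts:2018a}. So there is no in-paper argument to match against, but your proposal can still be assessed on its own terms, and it contains a genuine gap.

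Your verification that $\Psi \circ \Phi_k^{\mathrm{Lie}}(X)$ recovers $X$ in $\mathscr{M}^f_k$ rests on Theorem~\ref{thm:vk-periodic} and Corollary~\ref{corollary: nontrivial layers}. Both of those results are proved \emph{only for odd-dimensional spheres} (with Theorem~\ref{theorem: even sphere} extending them to even spheres); neither the finiteness of the $v_k$-periodic tower nor its convergence is available for a general based space $X$. The paper says this explicitly: ``it seems clear that spheres \dots\ are the only spaces for which the Taylor tower is finite in $v_k$-periodic homotopy,'' and ``It would be interesting to characterize the spaces for which $v_k$-periodic convergence holds.'' Worse, the convergence results you would need for non-spheres are themselves among the consequences of Heuts's theorem (see the discussion after Theorem~\ref{theorem: even sphere}, which cites~\cite{heuts:2018a} for exactly this). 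So your step~(i) is circular: you are invoking a corollary of the theorem to prove the theorem.

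Heuts's actual route is closer to the material in Section~6 than to Section~\ref{sec:id}. Rather than analyzing the Goodwillie tower of the identity object-by-object, he studies the $\infty$-category $\mathscr{M}^f_k$ itself. The existence of the Bousfield--Kuhn functor, giving a retraction $\Phi_k\Omega^\infty \simeq L_{T(k)}$, forces $\mathscr{M}^f_k$ to be $1$-excisive as an $\infty$-category, so that $\mathscr{M}^f_k \simeq \mathscr{P}_n\mathscr{M}^f_k$ for all $n$. The $T(k)$-local Tate vanishing (the same ingredient behind Theorem~\ref{thm:Kuhn}) then collapses the Tate-coalgebra description of $\mathscr{P}_n$ from Theorem~\ref{thm:heuts-class} to ordinary coalgebras over the commutative cooperad, and Koszul duality (Theorem~\ref{thm:koszul}) converts this into algebras over $\partial_*I_{\mathscr{T}op_*}$. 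In short, the argument is categorical rather than objectwise, and it does not presuppose convergence of the tower on any particular space.
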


Theorem~\ref{thm:heuts} should be compared to Kuhn's Theorem~\ref{thm:Kuhn} which also described a simplification to Goodwillie calculus that appears in the presence of chromatic localization. Both results illustrate the general principle that analysis in Goodwillie calculus typically comprises two pieces: (1) an operadic part related to the derivatives of the identity functor, and (2) something related to the Tate spectrum construction, which vanishes chromatically. Notice that in Kuhn's result, the operadic part is absent because the derivatives of the identity on ${\mathscr{S}p}$ form the trivial operad.

\subsection*{Tate data and the classification of Taylor towers}

We have already seen in McCarthy's Theorem \ref{thm:McCarthy} that the extra information for reconstructing the Taylor tower of a functor $F: {\mathscr{S}p} \to {\mathscr{S}p}$ can be described in terms of Tate spectrum constructions for the $\Sigma_n$-action on $\partial_nF$. For functors $F: \mathscr{T}op_* \to \mathscr{T}op_*$, we instead think of the Taylor tower information being given by a combination of such ``Tate data'' with the $\partial_*I$-bimodule structure of Theorem~\ref{thm:operad}.

To understand this perspective, we use the fact the functor $\partial_*$ (when viewed with values in $\partial_*I$-bimodules) admits a right adjoint which we denote $\Psi$, i.e. there is an adjunction
\begin{equation} \label{eq:adj} \partial_*: \operatorname{Exc}^*_n(\mathscr{C},\mathscr{D}) \rightleftarrows \operatorname{Bimod}_{\leq n}(\partial_*I_{\mathscr{D}},\partial_*I_{\mathscr{C}}) : \Psi \end{equation}
between the $\infty$-category of reduced $n$-excisive functors $\mathscr{C} \to \mathscr{D}$, and the $\infty$-category of $n$-truncated bimodules over the derivatives of the identity on $\mathscr{C}$ and $\mathscr{D}$. (The right adjoint was denoted $\Phi$ in~\cite{arone/ching:2015} but we use $\Psi$ here to avoid confusion with the Bousfield-Kuhn functor.) In \cite{arone/ching:2015}, we proved the following:

\begin{theorem} \label{thm:class}
The adjunction $(\partial_*,\Psi)$ of (\ref{eq:adj}) is comonadic. In particular, an $n$-excisive functor $F: \mathscr{C} \to \mathscr{D}$ is classified by the bimodule $\partial_*F$ together with an action of the comonad $\mathbf{C} := \partial_*\Psi$.
\end{theorem}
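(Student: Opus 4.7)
My plan is to deduce the theorem from the $\infty$-categorical Barr--Beck--Lurie monadicity/comonadicity theorem (e.g. \cite[4.7.3.5]{lurie:2017}, dualized). Applied to the adjunction $(\partial_*, \Psi)$ of (\ref{eq:adj}), it suffices to verify two conditions: first, that $\partial_*$ is conservative on the $\infty$-category $\operatorname{Exc}^*_n(\mathscr{C},\mathscr{D})$; and second, that this $\infty$-category admits, and $\partial_*$ preserves, totalizations of $\partial_*$-split cosimplicial objects. Once these two ingredients are in place, $\mathbf{C} = \partial_* \Psi$ becomes the classifying comonad and the comparison functor $\operatorname{Exc}^*_n(\mathscr{C},\mathscr{D}) \to \operatorname{coAlg}_{\mathbf{C}}$ is an equivalence, which is exactly the statement of the theorem.

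For conservativity, I would argue by induction on $n$. Suppose $\phi: F \to G$ is a map of reduced $n$-excisive functors with $\partial_*\phi$ an equivalence of bimodules; I need to show $\phi$ is an equivalence. Since $F$ and $G$ are reduced we have $P_0 F \simeq P_0 G \simeq *$, which is the base case. For the inductive step, Theorem~\ref{thm:hom} gives that $\partial_k\phi$ being an equivalence for each $k \leq n$ implies $D_k\phi: D_k F \to D_k G$ is an equivalence. Combined with the inductive hypothesis $P_{k-1}\phi$ is an equivalence, the fibre sequence $D_k F \to P_k F \to P_{k-1} F$ forces $P_k\phi$ to be an equivalence. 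Taking $k = n$ and using $F \simeq P_n F$, $G \simeq P_n G$ then finishes the argument. The inductive argument also shows that conservativity really uses only the underlying symmetric-sequence structure of $\partial_*F$, with the bimodule structure becoming relevant only for reconstructing $F$ from its derivatives.

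The main obstacle, as I expect, is verifying that $\partial_*$ preserves totalizations of $\partial_*$-split cosimplicial objects; this is the heart of the matter. Given a cosimplicial object $F^\bullet$ in $\operatorname{Exc}^*_n(\mathscr{C},\mathscr{D})$ whose image $\partial_*F^\bullet$ splits in $\operatorname{Bimod}_{\leq n}$, I would first check existence: since $\mathscr{D}$ has the required limits, $\operatorname{Tot}(F^\bullet)$ can be formed pointwise, and pointwise limits of $n$-excisive functors are $n$-excisive because the conditions defining $n$-excision are themselves limit conditions. The preservation statement then reduces to showing that the natural comparison $\partial_*(\operatorname{Tot} F^\bullet) \to \operatorname{Tot}(\partial_*F^\bullet)$ is an equivalence. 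Derivatives are built from cross-effects (finite limits, which commute with totalizations freely) followed by a sequential colimit in the $P_n$-construction, and it is precisely the failure of sequential colimits to commute with totalizations in general that must be circumvented. The key is that for a split cosimplicial object the totalization is computed by the augmentation and is preserved by every functor; one uses this principle in both directions---on the bimodule side to identify $\operatorname{Tot}(\partial_*F^\bullet)$ with the splitting, and on the functor side to show that the sequential colimit implicit in each $\partial_k$ does not obstruct commuting past $\operatorname{Tot}$, because after extracting a fixed cross-effect and evaluating, one is looking at a $\partial_*$-split diagram of spectra whose totalization behaves nicely.

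Once the Barr--Beck hypotheses are verified, the comparison functor $F \mapsto (\partial_*F, \text{action of } \mathbf{C})$ is an equivalence of $\infty$-categories onto $\operatorname{coAlg}_{\mathbf{C}}(\operatorname{Bimod}_{\leq n})$. Conceptually this is reassuring: the bimodule $\partial_*F$ records the layers and their basic composition behaviour, while the comonad action records precisely the Tate-type extension data needed to assemble the Taylor tower, in the spirit of Theorem~\ref{thm:McCarthy} but now uniformly packaged across all $\infty$-categorical contexts.
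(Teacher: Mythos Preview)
The paper does not actually prove this theorem; it simply records the result and cites \cite{arone/ching:2015} for the proof. Your Barr--Beck--Lurie strategy is indeed the one used there, and your conservativity argument is correct and essentially complete.

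The gap is in your treatment of the second Barr--Beck condition. You write that ``after extracting a fixed cross-effect and evaluating, one is looking at a $\partial_*$-split diagram of spectra,'' but this is not what you have: the hypothesis is that $\partial_* F^\bullet$ splits, i.e.\ that the \emph{multilinearized} cross-effects split, not the cross-effects themselves. The cross-effect $\operatorname{cr}_k F^\bullet(X_1,\dots,X_k)$ is in general a non-split cosimplicial object in $\mathscr{D}$, and it is precisely the sequential colimit in the multilinearization step that you need to commute past $\operatorname{Tot}$---but you have not explained why the splitting on the \emph{output} side gives you control over that colimit on the \emph{input} side. The slogan ``split totalizations are preserved by every functor'' only applies to functors \emph{out of} the category where the splitting lives, which here is the bimodule category, not $\operatorname{Exc}^*_n(\mathscr{C},\mathscr{D})$.

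One way to close this gap, and the route taken in \cite{arone/ching:2015}, is to argue by induction on $n$ using the fibre sequences $D_nF \to P_nF \to P_{n-1}F$: reduce the preservation statement to the homogeneous case, where the equivalence between $n$-homogeneous functors and $\Sigma_n$-spectra (Theorem~\ref{thm:hom}) is itself an equivalence of $\infty$-categories and hence preserves all limits. The inductive step then uses that totalizations of fibre sequences are fibre sequences. This avoids ever having to commute the multilinearization colimit past $\operatorname{Tot}$ directly.
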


Understanding the full structure on the derivatives of a functor $F: \mathscr{C} \to \mathscr{D}$ thus involves a calculation of the comonad $\mathbf{C}$ of Theorem~\ref{thm:class}. This is most easily described in the case of functors $F: \mathscr{T}op_* \to {\mathscr{S}p}$ where we obtain the following consequence.

\begin{theorem} \label{thm:spacesspectra}
The Taylor tower of $F: \mathscr{T}op_* \to {\mathscr{S}p}$ is determined by the right $\partial_*I$-module structure on $\partial_*F$ together with (suitably compatible) lifts $\psi_{n_1,\dots,n_k}$ of the form
\[ \begin{diagram}
  \node[2]{\operatorname{Map}(\partial_{n_1}I \wedge \dots \wedge \partial_{n_k}I, \partial_nF)_{h\Sigma_{n_1} \times \dots \times \Sigma_{n_k}}} \arrow{s,r}{N} \\
  \node{\partial_kF} \arrow{ne,t,..}{\psi_{n_1,\dots,n_k}} \arrow{e,b}{\phi_{n_1,\dots,n_k}} \node{\operatorname{Map}(\partial_{n_1}I \wedge \dots \wedge \partial_{n_k}I, \partial_nF)^{h\Sigma_{n_1} \times \dots \times \Sigma_{n_k}}}
\end{diagram} \]
where $N$ is the norm map from homotopy orbits to homotopy fixed points, and $\phi_{n_1,\dots,n_k}$ is the map associated to the right $\partial_*I$-module structure.
\end{theorem}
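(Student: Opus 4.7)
The plan is to apply the comonadic classification of Theorem~\ref{thm:class} and then unpack the comonad $\mathbf{C} = \partial_*\Psi$ explicitly in the case $\mathscr{C} = \mathscr{T}op_*$, $\mathscr{D} = {\mathscr{S}p}$. First observe that since ${\mathscr{S}p}$ is stable, the identity functor on ${\mathscr{S}p}$ is $1$-excisive, so the operad $\partial_*I_{{\mathscr{S}p}}$ is trivial; consequently a $(\partial_*I_{{\mathscr{S}p}}, \partial_*I_{\mathscr{T}op_*})$-bimodule is just a right $\partial_*I$-module, where we abbreviate $\partial_*I := \partial_*I_{\mathscr{T}op_*}$. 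By Theorem~\ref{thm:class}, the Taylor tower of $F$ is therefore determined by the right $\partial_*I$-module structure on $\partial_*F$ together with a $\mathbf{C}$-coalgebra structure on it.

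The next step is to compute $\Psi$, and hence $\mathbf{C}$, on right modules. Given $M$, the functor $\Psi(M)\colon \mathscr{T}op_* \to {\mathscr{S}p}$ is the ``cofree'' reduced $n$-excisive functor whose derivatives map to $M$ as a right module. Its layers are determined by $M$ via the classification of homogeneous functors (Theorem~\ref{thm:hom}), and the tower is glued together using the right $\partial_*I$-action together with homotopy fixed point constructions that absorb the relevant equivariance. Taking derivatives and computing cross-effects, one finds
\[ \mathbf{C}(M)_k \simeq \prod_{n_1, \ldots, n_k \geq 1} \operatorname{Map}\bigl(\partial_{n_1}I \wedge \cdots \wedge \partial_{n_k}I,\, M_{n_1 + \cdots + n_k}\bigr)^{h\Sigma_{n_1} \times \cdots \times \Sigma_{n_k}}, \]
with comultiplication encoded via the operad composition on $\partial_*I$.

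Finally I would translate a $\mathbf{C}$-coalgebra structure on $\partial_*F$ into the lifting data of the theorem. Projecting the structure map $\partial_*F \to \mathbf{C}(\partial_*F)$ onto the factor indexed by a partition $n = n_1 + \cdots + n_k$ yields, at level $k$, a map $\psi_{n_1,\ldots,n_k}$ into the corresponding homotopy fixed point spectrum. Compatibility with the counit of the adjunction $(\partial_*, \Psi)$ forces $N \circ \psi_{n_1,\ldots,n_k} = \phi_{n_1,\ldots,n_k}$, because $\phi$ is the canonical comparison to homotopy fixed points built from the right $\partial_*I$-action. Coassociativity then supplies the claimed compatibilities between the $\psi$'s for different partitions, and conversely any coherent system of such lifts is shown to assemble into a unique $\mathbf{C}$-coalgebra structure.

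The main obstacle will be the explicit identification of $\Psi$ and the resulting formula for $\mathbf{C}$ on right $\partial_*I$-modules. Concretely, one must compute the cross-effects of the cofree functor $\Psi(M)$, verify that they match the product formula above, and then check that the a priori intricate tower of coherences demanded by a comonad coalgebra collapses precisely to the finite datum of the norm lifts $\psi_{n_1,\ldots,n_k}$ --- with no hidden higher coherences beyond those already encoded in the right $\partial_*I$-module structure together with operad composition.
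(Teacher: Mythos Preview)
Your overall strategy---apply Theorem~\ref{thm:class} and then compute the comonad $\mathbf{C}=\partial_*\Psi$ explicitly for right $\partial_*I$-modules---is exactly the route the paper indicates (the theorem is presented as a direct consequence of \ref{thm:class}, with the detailed computation deferred to~\cite{arone/ching:2015}). Your observation that $\partial_*I_{{\mathscr{S}p}}$ is trivial, reducing bimodules to right $\partial_*I$-modules, is also correct and is the reason this case is tractable.

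However, there is a genuine error in your formula for $\mathbf{C}$. You write
\[
\mathbf{C}(M)_k \simeq \prod_{n_1,\dots,n_k\ge 1}\operatorname{Map}\bigl(\partial_{n_1}I\wedge\cdots\wedge\partial_{n_k}I,\,M_n\bigr)^{h\Sigma_{n_1}\times\cdots\times\Sigma_{n_k}},
\]
with homotopy \emph{fixed points}. The correct answer has homotopy \emph{orbits} $(-)_{h\Sigma_{n_1}\times\cdots\times\Sigma_{n_k}}$ in each factor. This is not a cosmetic slip: with fixed points, a $\mathbf{C}$-coalgebra structure on $\partial_*F$ would amount to maps into $\operatorname{Map}(\ldots)^{h\Sigma}$, which is precisely the data $\phi_{n_1,\dots,n_k}$ already encoded in the right $\partial_*I$-module structure---you would recover no new information, and the theorem would be vacuous. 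Moreover, your statement that ``$\psi_{n_1,\dots,n_k}$ lands in the corresponding homotopy fixed point spectrum'' and that ``$N\circ\psi_{n_1,\dots,n_k}=\phi_{n_1,\dots,n_k}$'' does not type-check: $N$ goes from orbits to fixed points, so $\psi$ must land in orbits for this to make sense, as the diagram in the theorem shows.

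The reason orbits appear is that $\partial_k$ is computed via multilinearized cross-effects, which are colimit constructions; when applied to the mapping-space description of $\Psi(M)$ they convert the $\Sigma$-invariants implicit there into coinvariants. Once you have the orbit formula, the coalgebra map $\partial_kF\to\mathbf{C}(\partial_*F)_k$ projects to each factor to give $\psi_{n_1,\dots,n_k}$ landing in $(-)_{h\Sigma}$, and compatibility with the right-module structure on $\partial_*F$ (not merely the counit) is what forces $N\circ\psi=\phi$. The rest of your outline---coassociativity supplying the compatibilities among the $\psi$'s, and the converse direction---then goes through as you describe.
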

Such lifts make $\partial_*F$ into what we call a \emph{divided power right $\partial_*I$-module}.\index{divided power module}

The data of the lifts $\psi_{n_1,\dots,n_k}$ in Theorem~\ref{thm:spacesspectra} can be reframed as choices of nullhomotopies for maps
\[ \partial_kF \to \operatorname{Map}(\partial_{n_1}I \wedge \dots \wedge \partial_{n_k}I,\partial_nF)^{t\Sigma_{n_1} \times \dots \times \Sigma_{n_k}} \]
into the corresponding Tate construction. We think of this as the \emph{Tate data} corresponding to the Taylor tower of the functor $F$.

\begin{problem}
It is still unclear how to describe the structure on the derivatives of a functor $F: \mathscr{T}op_* \to \mathscr{T}op_*$ as explicitly as that in Theorem~\ref{thm:spacesspectra}. The comonad guaranteed by Theorem~\ref{thm:class} is hard to understand in this case. In \cite{arone/ching:2015} we gave a concrete description of this structure for $3$-excisive functors, but a more general picture was too elusive.
\end{problem}

\subsection*{Vanishing Tate data and applications}

Another way to see how the Tate spectrum construction comes up is via the unit map $\eta: P_nF \to \Psi \partial_{\leq n}F$ of the adjunction (\ref{eq:adj}). The right adjoint $\Psi$ can be written in terms of mapping spaces for the $\infty$-category of bimodules. This leads to the existence of the following diagram for, for example, a functor $F: \mathscr{T}op_* \to \mathscr{T}op_*$:
\begin{equation} \label{eq:tate} \begin{diagram}
  \node{\Omega^\infty(\partial_nF \wedge X^{\wedge n})_{h\Sigma_n}} \arrow{e,t}{N} \arrow{s} \node{\Omega^\infty(\partial_nF \wedge X^{\wedge n})^{h\Sigma_n}} \arrow{s} \\
  \node{P_nF(X)} \arrow{e,t}{\eta} \arrow{s} \node{\operatorname{Bimod}_{\partial_*I}(\partial_*(\operatorname{Hom}(X,-)),\partial_{\leq n}F)} \arrow{s} \\
  \node{P_{n-1}F(X)} \arrow{e,t}{\eta} \node{\operatorname{Bimod}_{\partial_*I}(\partial_*(\operatorname{Hom}(X,-)),\partial_{\leq n-1}F)}
\end{diagram} \end{equation}
where the columns are fibration sequences, and $\operatorname{Bimod}_{\partial_*I}(-,-)$ denotes the space of maps of $\partial_*I$-bimodules between the given derivatives. Here $\partial_{\leq n}F$ denotes the \emph{$n$-truncation} of a $\partial_*I$-bimodule, given by setting all terms in degree larger than $n$ to be the trivial spectrum.

We think of (\ref{eq:tate}) as a generalization of McCarthy's Theorem~\ref{thm:McCarthy}, and there is a similar picture for functors from spectra to spectra that reduces to McCarthy's result. For functors $F: \mathscr{T}op_* \to {\mathscr{S}p}$, we can similarly deduce the following version, which is from~\cite[4.17]{arone/ching:2015}.

\begin{theorem}
Let $F: \mathscr{T}op_* \to {\mathscr{S}p}$ be a reduced functor that preserved filtered colimits. Then there is a pullback square of the form
\[ \begin{diagram}
  \node{P_nF(X)} \arrow{e} \arrow{s} \node{(\partial_nF \wedge X^{\wedge n}/\Delta^n X)_{h\Sigma_n}} \arrow{s} \\
  \node{P_{n-1}F(X)} \arrow{e} \node{(\partial_nF \wedge \Sigma\Delta^n X)_{h\Sigma_n}}
\end{diagram} \]
where $\Delta^n X$ is the fat diagonal inside $X^{\wedge n}$.
\end{theorem}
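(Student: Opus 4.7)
The plan is to specialize the general comparison diagram~(\ref{eq:tate}) to the case $F\colon \mathscr{T}op_* \to {\mathscr{S}p}$. Since $F$ is spectrum-valued we can drop the outer $\Omega^\infty$, and because the operad $\partial_*I_{{\mathscr{S}p}}$ is trivial, the $(\partial_*I_{{\mathscr{S}p}}, \partial_*I_{\mathscr{T}op_*})$-bimodules that appear on the right collapse to right $\partial_*I_{\mathscr{T}op_*}$-modules. The units of the adjunction~(\ref{eq:adj}) then produce natural maps $P_k F(X) \to \Psi(\partial_{\leq k}F)(X)$ for $k = n-1, n$, fitting into a vertical ladder with the canonical map $P_n F \to P_{n-1}F$. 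The main task is to identify the right-hand column of this ladder with the spectra appearing in the statement.

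For this identification I would use Arone's formula $\partial_n(\Sigma^\infty \operatorname{Hom}_*(X,-)) \simeq \operatorname{Map}(\Sigma^\infty X^{\wedge n}/\Delta^n X, S^0)$, together with the fact that $F$ preserves filtered colimits (allowing one to reduce to finite CW-complexes $X$), to express $\Psi(\partial_{\leq n} F)(X)$ as an end-like construction over the derivatives of $\Sigma^\infty \operatorname{Hom}_*(X,-)$. Spanier-Whitehead duality then converts the mapping-spectrum form into a smash-product form, and the orbit construction arises naturally from the right-module bookkeeping; the outcome is that the top-right corner is equivalent to $(\partial_n F \wedge X^{\wedge n}/\Delta^n X)_{h\Sigma_n}$ rather than the homotopy fixed points that appeared for space-valued $F$.

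To derive the pullback property I would apply $\Psi$ to the cofibre sequence of $\partial_*I_{\mathscr{T}op_*}$-modules $M[n] \to \partial_{\leq n}F \to \partial_{\leq n-1}F$, where $M[n]$ is concentrated in degree $n$ with value $\partial_n F$. This exhibits the fibre of the right-hand vertical map as $\Psi(M[n])(X) \simeq (\partial_n F \wedge X^{\wedge n})_{h\Sigma_n}$, matching $D_n F(X)$, the fibre of the left-hand vertical. The bottom-right corner $(\partial_n F \wedge \Sigma \Delta^n X)_{h\Sigma_n}$ then arises as the cofibre of the inclusion $(\partial_n F \wedge X^{\wedge n})_{h\Sigma_n} \to (\partial_n F \wedge X^{\wedge n}/\Delta^n X)_{h\Sigma_n}$, coming from smashing the cofibre sequence $\Delta^n X \to X^{\wedge n} \to X^{\wedge n}/\Delta^n X$ with $\partial_n F$ and taking $\Sigma_n$-orbits.

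The main obstacle is the explicit bookkeeping of the module-theoretic mapping space: one must verify that the identification of $\Psi(\partial_{\leq n} F)(X)$ with $(\partial_n F \wedge X^{\wedge n}/\Delta^n X)_{h\Sigma_n}$ is compatible with the truncation maps, and that the induced equivalence of fibres recovers the tautological inclusion $(\partial_n F \wedge X^{\wedge n})_{h\Sigma_n} \hookrightarrow (\partial_n F \wedge X^{\wedge n}/\Delta^n X)_{h\Sigma_n}$ rather than some twisted variant. The reason orbits appear here where fixed points appeared in diagram~(\ref{eq:tate}) is precisely the Spanier-Whitehead duality used above, and tracking this duality carefully through the right-module structure is where the real work lies.
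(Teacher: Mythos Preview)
Your overall strategy---specialize the ladder built from the unit of the adjunction~(\ref{eq:adj}) and identify the right-hand column---is the right instinct, and it is exactly what the paper gestures at when it says the result is deduced ``similarly'' from diagram~(\ref{eq:tate}). But the specific identification you propose cannot be correct, and this is a genuine gap rather than a bookkeeping issue.

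You claim that $\Psi(\partial_{\leq n}F)(X)\simeq(\partial_nF\wedge X^{\wedge n}/\Delta^nX)_{h\Sigma_n}$ and $\Psi(\partial_{\leq n-1}F)(X)\simeq(\partial_nF\wedge\Sigma\Delta^nX)_{h\Sigma_n}$. Neither can hold. The functor $\Psi(\partial_{\leq n}F)$ is built from the entire truncated module $\partial_1F,\dots,\partial_nF$ together with its $\partial_*I$-action, whereas the spectrum on the right depends only on $\partial_nF$. The second identification is even more clearly impossible: $\Psi(\partial_{\leq n-1}F)$ does not see $\partial_nF$ at all, yet $(\partial_nF\wedge\Sigma\Delta^nX)_{h\Sigma_n}$ is built from nothing else. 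So the square in the statement is \emph{not} the $\eta$-square of~(\ref{eq:tate}) with its right column rewritten; it is a different square with the same left column.

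What actually produces the correct right-hand column is the observation that $\Sigma_n$ acts freely (away from the basepoint) on $X^{\wedge n}/\Delta^nX$, so the norm map for $\partial_nF\wedge X^{\wedge n}/\Delta^nX$ is an equivalence and no Tate term appears. One then constructs a natural map from $P_nF(X)$ to $(\partial_nF\wedge X^{\wedge n}/\Delta^nX)_{h\Sigma_n}$ directly---this is where the right $\partial_*I$-module structure on $\partial_*F$ enters, via the structure maps $\partial_kF\to\operatorname{Map}(\partial_{n_1}I\wedge\cdots\wedge\partial_{n_k}I,\partial_nF)$ rather than via the full adjoint $\Psi$. The bottom-right corner then arises exactly as you say, as the cofibre coming from $\Delta^nX\to X^{\wedge n}\to X^{\wedge n}/\Delta^nX$, and the pullback property follows because both vertical fibres are $(\partial_nF\wedge X^{\wedge n})_{h\Sigma_n}$. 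Your third and fourth paragraphs are on the right track; it is the second paragraph's identification of $\Psi$ that needs to be replaced.
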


All the results mentioned here have similar consequences to McCarthy's Theorem in cases where the Tate spectra vanish. In such situations, the Taylor tower of a functor is completely determined by the relevant module or bimodule structure.

\begin{proposition} \label{prop:bimod}
Suppose $F: \mathscr{T}op_* \to \mathscr{T}op_*$ preserves filtered colimits and $X$ has the property that
\[ (\partial_nF \wedge X^{\wedge n})^{t\Sigma_n} \simeq * \]
for $2 \leq n \leq N$. Then
\[ P_NF(X) \simeq \operatorname{Bimod}_{\partial_*I}(\partial_*\operatorname{Hom}(X,-),\partial_{\leq N}F). \]
For $F: \mathscr{T}op_* \to {\mathscr{S}p}$ satisfying the same assumptions
\[ P_NF(X) \simeq \operatorname{RMod}_{\partial_*I}(\partial_*\Sigma^\infty\operatorname{Hom}(X,-),\partial_{\leq N}F) \]
where $\operatorname{RMod}_{\partial_*I}(-,-)$ is the mapping spectrum for the $\infty$-category of right $\partial_*I$-modules.
\end{proposition}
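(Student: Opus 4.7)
The plan is to prove both statements by induction on $N$, using diagram (\ref{eq:tate}) as the main engine. The unit of the adjunction (\ref{eq:adj}) gives a natural transformation of towers
\[ \eta_n(X): P_nF(X) \to \operatorname{Bimod}_{\partial_*I}(\partial_*\operatorname{Hom}(X,-), \partial_{\leq n}F), \]
and (\ref{eq:tate}) arranges the $\eta_n$ into a map of fibration sequences: the left column is the defining sequence $D_nF(X) \to P_nF(X) \to P_{n-1}F(X)$ for the $n$-th Taylor layer, the right column is the corresponding sequence for the bimodule mapping spaces induced by the truncation $\partial_{\leq n}F \to \partial_{\leq n-1}F$, and the top horizontal map identifying the fibers is the norm $N$. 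Since the cofiber of $N$ is exactly $\Omega^\infty(\partial_nF \wedge X^{\wedge n})^{t\Sigma_n}$, the Tate-vanishing hypothesis ensures $N$ is an equivalence for $2 \leq n \leq N$.

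The base case $n = 1$ is immediate because $P_1F$ is entirely determined by $\partial_1F$, which as a right module over the operad unit $\partial_1I \simeq S^0$ carries no further structure; consequently $\eta_1(X)$ recovers the classification of $1$-excisive functors from Example~\ref{ex:1-exc}. For the inductive step, assume $\eta_{n-1}(X)$ is an equivalence. In the map of vertical fibration sequences forming (\ref{eq:tate}), both the fiber map $N$ and the base map $\eta_{n-1}(X)$ are equivalences, so the total map $\eta_n(X)$ is an equivalence as well. Iterating up to $n = N$ yields the first claimed equivalence.

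The spectrum-valued case is formally identical: one uses the analogue of (\ref{eq:adj}) in which the right adjoint lands in right $\partial_*I$-modules (so $\Psi$ is replaced by $\operatorname{RMod}_{\partial_*I}(\partial_*\Sigma^\infty\operatorname{Hom}(X,-),-)$, reflecting that $\partial_*I_{\mathscr{S}p}$ is the trivial operad), and the corresponding Tate square lives in spectra rather than in infinite loop spaces. The hardest part of the argument — already packaged into the existence of diagram (\ref{eq:tate}) — is the identification of the fiber of $\Psi\partial_{\leq n}F(X) \to \Psi\partial_{\leq n-1}F(X)$ with the homotopy fixed points $\Omega^\infty(\partial_nF \wedge X^{\wedge n})^{h\Sigma_n}$, together with the verification that the induced comparison with $D_nF(X) \simeq \Omega^\infty(\partial_nF \wedge X^{\wedge n})_{h\Sigma_n}$ really is the norm. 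This relies on computing the cofree bimodule associated to an $n$-truncation, which is the technical core of~\cite{arone/ching:2015}. Once that identification is in hand, the proposition reduces to the purely formal Tate-vanishing induction described above.
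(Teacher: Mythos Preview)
Your proposal is correct and matches the paper's own (implicit) argument: the paper does not write out a formal proof of this proposition, but states it as an immediate consequence of diagram~(\ref{eq:tate}) in the case where the Tate terms vanish, which is exactly the inductive argument you give. Your identification of the base case, the inductive use of the map of fibration sequences with the norm map on fibres, and the remark that the hard work is already packaged into the construction of~(\ref{eq:tate}) from~\cite{arone/ching:2015}, are all on target.
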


Since the Tate construction vanishes for any rational spectrum, we see that the hypothesis of Proposition~\ref{prop:bimod} holds if either:
\begin{itemize}
  \item $\partial_nF$ is a rational spectrum for $2 \leq n \leq N$;
  \item $X$ is a rational (simply-connected) topological space.
\end{itemize}
In particular, if $F$ is a functor either to or from the category of rational based spaces, then the Taylor tower of $F$ is given by the formula in Proposition~\ref{prop:bimod}.

Proposition~\ref{prop:bimod} allows us to extend Kuhn's Theorem~\ref{thm:Kuhn} to functors from $\mathscr{T}op_*$ to ${\mathscr{S}p}$.

\begin{proposition}
Let $F$ be a functor from $\mathscr{T}op_*$ to $T(n)$-local spectra. Then there is an equivalence
\[ P_NF(X) \simeq  \operatorname{RMod}_{\partial_*I}(\mathbb{D}(X^{\wedge *}/\Delta^*X),\partial_{\leq N}F). \]
Similarly, if $F$ is a functor from $\mathscr{T}op_*$ to $\mathscr{T}op_*$, and $V_k$ is a finite complex of type $k$, then there is an equivalence
\[ P_Nv_k^{-1}F(X)^{V_k} \simeq  \operatorname{RMod}_{\partial_*I}(\mathbb{D}(X^{\wedge *}/\Delta^*X),v_k^{-1}\partial_{\leq N}F^{V_k}). \]
The spectra $\mathbb{D} X^{\wedge *}/\Delta^*X$ form a right $\partial_*I$-module, by identifying them as the derivatives of the functor $\Sigma^\infty \operatorname{Hom}(X,-): \mathscr{T}op_* \to {\mathscr{S}p}$.
\end{proposition}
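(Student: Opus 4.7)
The plan is to reduce both statements to Proposition~\ref{prop:bimod} by checking its Tate-vanishing hypothesis in the two settings, together with the identification of the right $\partial_*I$-module appearing on each right-hand side. That identification, $\partial_*\Sigma^\infty \operatorname{Hom}(X,-) \simeq \mathbb{D}(X^{\wedge *}/\Delta^*X)$, is precisely the first author's computation from \cite{arone:1999} that was recalled earlier in the splitting-results subsection; the $\partial_*I$-action is automatic from Theorem~\ref{thm:operad}.

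For the first statement, since $F$ takes values in $T(n)$-local spectra, the relevant stabilization ${\mathscr{S}p}(\mathscr{D})$ may be taken to be $T(n)$-local spectra, so each derivative $\partial_jF$ is itself $T(n)$-local. The Tate construction vanishes on any $T(n)$-local spectrum --- this is the Kuhn/Greenlees-Sadofsky/Mahowald input that powers Theorem~\ref{thm:Kuhn} --- so $(\partial_jF \wedge X^{\wedge j})^{t\Sigma_j}\simeq *$ for every $X$ and every $2 \leq j \leq N$. The hypothesis of the spectrum version of Proposition~\ref{prop:bimod} is therefore satisfied for all $X$, and the first formula drops out immediately.

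For the second statement, set $G := v_k^{-1}F^{V_k}$. Since $V_k$ is a finite complex, $(-)^{V_k}$ commutes with finite limits and filtered colimits, and so does $v_k^{-1}(-)$; hence the composite is an exact operation that commutes both with formation of $P_N$ and with the Goodwillie derivative, giving $\partial_jG \simeq v_k^{-1}(\partial_jF)^{V_k}$ and $P_NG(X) \simeq P_N v_k^{-1}F(X)^{V_k}$. Moreover $v_k^{-1}(-)^{V_k}$ factors, via the Bousfield-Kuhn functor $\Phi_k$, through $T(k)$-local spectra, so $G$ may be regarded as a functor $\mathscr{T}op_* \to {\mathscr{S}p}_{T(k)}$ whose derivatives are the $T(k)$-local spectra $v_k^{-1}(\partial_jF)^{V_k}$. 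Applying the first statement to this $G$ yields the second formula.

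The main obstacle is the bookkeeping in the second step: one needs to check carefully that the bimodule adjunction (\ref{eq:adj}), and in particular its right adjoint $\Psi$, is compatible with the Bousfield-Kuhn equivalence between $v_k$-periodic infinite-loop-space valued functors and $T(k)$-local-spectrum valued functors. Concretely, one must verify that the mapping object $\operatorname{RMod}_{\partial_*I}(\mathbb{D}(X^{\wedge *}/\Delta^*X), v_k^{-1}\partial_{\leq N}F^{V_k})$ really represents $P_NG(X)$ and not some shadow involving the wrong adjoint. Once this compatibility is established, both formulas follow at once from Proposition~\ref{prop:bimod} together with Arone's identification of the derivatives of $\Sigma^\infty \operatorname{Hom}(X,-)$.
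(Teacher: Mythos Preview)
Your approach is exactly what the paper has in mind: the proposition is stated there without proof, introduced only by the sentence ``Proposition~\ref{prop:bimod} allows us to extend Kuhn's Theorem~\ref{thm:Kuhn} to functors from $\mathscr{T}op_*$ to ${\mathscr{S}p}$.'' So reducing both parts to Proposition~\ref{prop:bimod} via chromatic Tate vanishing, together with the identification $\partial_*\Sigma^\infty\operatorname{Hom}(X,-)\simeq \mathbb{D}(X^{\wedge *}/\Delta^*X)$ from~\cite{arone:1999}, is precisely the intended argument, and your write-up supplies the details the paper omits.

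One small imprecision worth cleaning up: in the second part you say that $v_k^{-1}(-)^{V_k}$ ``factors, via the Bousfield--Kuhn functor $\Phi_k$, through $T(k)$-local spectra.'' Strictly speaking $\Phi_k$ is the homotopy limit over a system of such $V_k$'s, not the construction for a single $V_k$, and the spectrum $v_k^{-1}E^{V_k}\simeq E\wedge \mathbb{D}V_k[\nu_k^{-1}]$ is not obviously $T(k)$-local for arbitrary $E$ (that would amount to a smashing statement). What you actually need is only that the relevant Tate constructions vanish, and for this it is cleaner to argue directly: since $V_k$ is finite, smashing with $\mathbb{D}V_k$ commutes with $(-)^{t\Sigma_j}$, and then inverting $\nu_k$ kills the result because Tate spectra for finite groups are $v_k$-torsion (the input to Kuhn's theorem). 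You already flag this bookkeeping as the main obstacle; once phrased this way it goes through without invoking $\Phi_k$ or $T(k)$-locality per se.
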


It seems less straightforward to obtain an analogous result for functors from spaces to spaces than in the spectrum-valued case, yet Heuts's Theorem~\ref{thm:heuts} simplifies this to the case of functors between spectral Lie algebras.

\subsection*{Functors from spaces to spectra and modules over the little disc operads}

There is a close connection between the spectral Lie operad $\partial_*I$ and the little disc operads. Write $E_n$ for the operad of spectra formed by taking suspension spectra of the terms in the little $n$-discs operad of May~\cite{may:1972}, and write $KE_n$ for the (derived) Koszul dual of $E_n$ in the sense of~\cite{ching:2012}. Then $\partial_*I$ can be expressed as the inverse limit of the sequence of Koszul duals:
\[ \partial_*I \to \dots \to KE_n \to KE_{n-1} \to \dots \to KE_1. \]
It is conjectured that $KE_n$ is equivalent as an operad to a desuspension of $E_n$ itself. This is proved in the context of chain complexes by Fresse~\cite{fresse:2011}.

The connection between $\partial_*I$ and the sequence $KE_n$ has powerful consequences. Any divided power module over $KE_n$ determines, by pulling back along the map $\partial_*I \to KE_n$, a divided power module over $\partial_*I$. Moreover, the terms in the operad $KE_n$ have free symmetric group actions, so the norm map for the $\Sigma_k$-action on $\operatorname{Map}(KE_n(k),X^{\wedge k})$ is an equivalence. It follows that any right $KE_n$-module has a unique divided power structure, and hence determines a divided power $\partial_*I$-module.

This allows us to ask the following question: for a given functor $F: \mathscr{T}op_* \to {\mathscr{S}p}$, is the Taylor tower of $F$ determined by a right $KE_n$-module structure on $\partial_*F$?

The following result of \cite{arone/ching:2017} gives a criterion for this to be the case.

\begin{theorem} \label{thm:framed}
A polynomial functor $F: \mathscr{T}op_* \to {\mathscr{S}p}$ is determined by a $KE_n$-module structure on $\partial_*F$ if and only if $F$ is the left Kan extension of a functor $f\mathscr{M}an_n \to {\mathscr{S}p}$ along the inclusion $f\mathscr{M}an_n \to \mathscr{T}op_*$, where $f\mathscr{M}an_n$ is the subcategory of $\mathscr{T}op_*$ consisting of certain `pointed framed $n$-dimensional manifolds' (that is, one-point compactifications of framed $n$-manifolds) and `pointed framed embeddings'.
\end{theorem}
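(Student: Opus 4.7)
The plan is to classify polynomial functors $\mathscr{T}op_* \to {\mathscr{S}p}$ by their derivatives equipped with divided-power right $\partial_*I$-module structure (using the comonadic classification of Theorem~\ref{thm:class}, specialized as in Theorem~\ref{thm:spacesspectra}), and then identify those module structures that come from $KE_n$-modules with those functors that are left Kan extensions from $f\mathscr{M}an_n$.

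First, I would observe that because the tower $\partial_*I \to \cdots \to KE_n \to \cdots \to KE_1$ provides a map of operads, any right $KE_n$-module restricts to a right $\partial_*I$-module. Furthermore, because $\Sigma_k$ acts freely on $KE_n(k)$ (inherited from the free action on $E_n(k)$), the relevant norm maps are equivalences, so a $KE_n$-module automatically carries a canonical divided-power structure. Hence by the classification, a $KE_n$-module structure on $\partial_*F$ suffices to determine $F$ among polynomial functors.

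For the ``if'' direction, I would compute $\partial_*F$ when $F = \operatorname{Lan}_{f\mathscr{M}an_n \hookrightarrow \mathscr{T}op_*}(f)$ for some $f: f\mathscr{M}an_n \to {\mathscr{S}p}$. The key representable building blocks are $\Sigma^\infty\operatorname{Hom}(M^+,-)$ for pointed framed $n$-manifolds $M^+$; by Arone's formula their derivatives are $\mathbb{D}(M^{\wedge k}/\Delta^k M)$, which is the Spanier--Whitehead dual of a (pointed) configuration space of $M$. Since such configuration spaces for framed $n$-manifolds carry a natural $E_n$-coaction via insertion of little discs, their duals carry a $KE_n$-module structure, and left Kan extension preserves this through the colimit formula for derivatives.

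For the ``only if'' direction, given a polynomial $F$ whose derivatives admit a $KE_n$-module structure, I would form the comparison $\tilde F := \operatorname{Lan}_{f\mathscr{M}an_n \hookrightarrow \mathscr{T}op_*}(F|_{f\mathscr{M}an_n})$ together with its canonical map $\tilde F \to F$. Taking derivatives of both sides, the forward direction supplies the same $KE_n$-module $\partial_*F$ on each, and the uniqueness part of the classification forces the comparison to be an equivalence.

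The hard part will be establishing the naturality statement at the heart of the forward direction: namely, that the $E_n$-operadic action on configuration spaces of framed $n$-manifolds is genuinely functorial in framed embeddings, so that the $KE_n$-module structures on $\partial_*\Sigma^\infty\operatorname{Hom}(M^+,-)$ assemble coherently as $M$ varies in $f\mathscr{M}an_n$. This amounts to reconciling Goodwillie calculus with Weiss's embedding calculus, and at the categorical level realises the theorem as a manifestation of Koszul duality between $E_n$ and $KE_n$; verifying the requisite coherences (rather than a single $KE_n$-module structure at a fixed $M$) is where the technical weight of the argument will lie.
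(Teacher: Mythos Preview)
The paper does not actually prove Theorem~\ref{thm:framed}; it is stated as a result of \cite{arone/ching:2017} and no argument is given here. So there is no proof in the paper to compare your proposal against.

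That said, your outline is broadly consonant with the surrounding discussion in the paper: the reduction to divided-power right $\partial_*I$-modules via Theorems~\ref{thm:class} and~\ref{thm:spacesspectra}, together with the observation that freeness of the $\Sigma_k$-action on $KE_n(k)$ makes the norm maps equivalences, is exactly the mechanism the paper highlights just before stating the theorem. Your identification of the derivatives of $\Sigma^\infty\operatorname{Hom}(M^+,-)$ with duals of configuration spaces, and the source of the $KE_n$-action there, is also the expected ingredient.

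One point in your ``only if'' direction deserves more care. You form $\tilde F = \operatorname{Lan}(F|_{f\mathscr{M}an_n})$ and assert that $\partial_*\tilde F$ and $\partial_*F$ agree as $KE_n$-modules. For the underlying symmetric sequences this is plausible, since $f\mathscr{M}an_n$ contains finite wedges of $S^n$ (one-point compactifications of $\coprod_k \mathbb{R}^n$), and cross-effects on such wedges compute derivatives. But you must also check that the induced $KE_n$-module structure on $\partial_*\tilde F$ coincides with the \emph{given} one on $\partial_*F$, not merely that both sides admit some $KE_n$-structure; otherwise the uniqueness part of the classification does not apply. Equivalently, you need that the functor ``take derivatives with their $KE_n$-module structure'' is fully faithful on the image of left Kan extension from $f\mathscr{M}an_n$, and that the comparison map $\tilde F \to F$ realizes the identity on $KE_n$-modules. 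This is where the genuine content of the Koszul duality statement lives, and your final paragraph correctly flags it as the hard part; just be aware that it is needed in both directions, not only the forward one.
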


In the next section we will see an application of \ref{thm:framed} to the Taylor tower of algebraic $K$-theory of spaces.

\begin{remark}
The category $f\mathscr{M}an_n$ in Theorem~\ref{thm:framed} is related to the ``zero-pointed manifolds'' of Ayala and Francis~\cite{ayala/francis:2014}, and this result suggests deeper connections between Goodwillie calculus and factorization homology that are yet to be explored.
\end{remark}

\section{Applications and Calculations in Algebraic K-Theory}

Much of Goodwillie's initial motivation for developing the calculus of functors came from algebraic $K$-theory, and aside from the identity functor, most of the calculations and applications of calculus have been in this area, largely by Randy McCarthy and coauthors. We review here what is known about the Taylor tower of algebraic $K$-theory both in the context of spaces and ring spectra.

\subsection*{Algebraic K-theory of spaces}

Let $A: \mathscr{T}op \to {\mathscr{S}p}$ denote Waldhausen's functor calculating the algebraic $K$-theory\index{algebraic $K$-theory of spaces} of a topological space $X$, i.e. of the category of spaces over and under $X$; see \cite{waldhausen:1985}. For calculus to be at all relevant to the study of the functor $A$, we first have to know that the Taylor tower converges for some spaces $X$.

\begin{theorem}[Goodwillie \cite{goodwillie:1991}]
The functor $A: \mathscr{T}op \to {\mathscr{S}p}$ is $1$-analytic. Thus the Taylor tower of $A$ converges on simply-connected spaces.
\end{theorem}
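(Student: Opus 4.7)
The plan is to verify Goodwillie's defining connectivity estimates for $\rho$-analyticity with $\rho = 1$: there should exist constants $q$ and $\kappa$ so that for every strongly cocartesian $(n+1)$-cube $\mathscr{X}$ in $\mathscr{T}op$ whose initial edges $\mathscr{X}(\emptyset) \to \mathscr{X}(\{s\})$ are $k_s$-connected with each $k_s \ge \kappa$, the cube $A(\mathscr{X})$ is $\bigl(\sum_s k_s - q\bigr)$-cartesian. Once this is proved, convergence on simply-connected spaces is a direct application of the general convergence theorem for $\rho$-analytic functors recorded earlier in the excerpt, since a simply-connected $Y$ has $Y \to \ast$ of connectivity $\ge 1$.

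The core step is the case $n = 1$: any pushout square of spaces whose horizontal maps are $p$-connected and whose vertical maps are $q$-connected (with $p, q \ge \kappa$) is sent by $A$ to a $(p + q - c)$-cartesian square, for an absolute constant $c$. I would use Waldhausen's description $A(X) = K\bigl(\mathcal{R}_{hf}(X)\bigr)$, where $\mathcal{R}_{hf}(X)$ is the Waldhausen category of homotopy finitely dominated retractive spaces over $X$, together with the generic fibration sequence relating the $K$-theories across a pushout of spaces. This reduces the estimate to understanding how the assignment $X \mapsto \mathcal{R}_{hf}(X)$ fails to carry pushouts of spaces to pushouts of Waldhausen categories, which in turn can be controlled via the classical higher Blakers-Massey theorem---the same ingredient that gives $1$-analyticity of the identity $I \colon \mathscr{T}op \to \mathscr{T}op$, as mentioned in the preceding examples.

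For general $n$ I would proceed inductively by viewing a strongly cocartesian $(n+1)$-cube as a pushout of strongly cocartesian $n$-cubes, iterating the base case along each factor. The total-fibre calculus (the total fibre of an $(n+1)$-cube equals the fibre of the comparison between the total fibres of two constituent $n$-subcubes) makes the connectivity losses accumulate additively, producing linear dependence on $n$ with slope $1$, which is exactly $\rho$-analyticity with $\rho = 1$.

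The main obstacle is the sharpness of the base case. The Waldhausen $S_\bullet$-construction together with the group-completion step implicit in passing to $K$-theory could in principle worsen connectivity beyond what slope $1$ permits. Goodwillie's argument in \cite{goodwillie:1991} handles this by systematic connectivity bookkeeping: one effectively shows that $A$ differs from a stably $1$-excisive functor (the linear part, of the form $X \mapsto \Sigma^\infty \Omega X_+$ for simply-connected $X$) by terms whose failure of excision is itself governed by Blakers-Massey-type estimates of slope $1$. With the base case established, the inductive propagation to higher cubes and the convergence conclusion both follow formally.
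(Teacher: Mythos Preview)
The paper does not prove this theorem; it simply records it with a citation to Goodwillie's \emph{Calculus~II}. So there is no ``paper's own proof'' to compare against beyond the reference. That said, your sketch contains a genuine gap and also diverges substantially from Goodwillie's actual argument.

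The main gap is your inductive step. Stable $1$-excision does \emph{not} formally imply stable $n$-excision for $n>1$, and your proposed induction does not work as written. You want to view a strongly cocartesian $(n+1)$-cube as a map of strongly cocartesian $n$-cubes $\mathscr{X}_0 \to \mathscr{X}_1$ and then ``iterate the base case''. But the base case only controls how $A$ behaves on a \emph{single} pushout square; it says nothing about how cartesian the map $A(\mathscr{X}_0) \to A(\mathscr{X}_1)$ is as a map of $n$-cubes. Knowing each of $A(\mathscr{X}_0)$ and $A(\mathscr{X}_1)$ is highly cartesian gives no bound on the connectivity of the map between their total fibres. The higher Blakers--Massey theorem for the identity is itself a nontrivial result that is not a formal consequence of ordinary Blakers--Massey, and the same is true for $A$: each $E_n(c,\kappa)$ condition requires its own argument.

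Your base case is also too optimistic. There is no ``generic fibration sequence relating the $K$-theories across a pushout of spaces'' of the kind you invoke; Waldhausen's fibration and additivity theorems do not give such a statement for arbitrary pushouts in $\mathscr{T}op$, and the failure of $X \mapsto \mathcal{R}_{hf}(X)$ to preserve pushouts is not controlled by Blakers--Massey in any direct way. Goodwillie's actual proof in \cite{goodwillie:1991} goes through geometric topology rather than through the $S_\bullet$-construction: the analyticity estimates for $A$ are deduced from connectivity results for spaces of concordances (pseudoisotopies), ultimately resting on multiple disjunction lemmas established in Goodwillie's thesis. The link between $A$-theory and concordance theory is Waldhausen's stable parametrized $h$-cobordism theorem, not the categorical description of $A$ via retractive spaces that you propose to use.
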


Goodwillie's initial application of this result, however, was not to the convergence of the Taylor tower, but to the cyclotomic trace map $\tau$ from $A(X)$ to the \emph{topological cyclic homology} $TC(X)$ of B\"{o}kstedt, Hsiang and Madsen \cite{bokstedt/hsiang/madsen:1993}.

\begin{theorem}[B\"{o}ksted, Carlsson, Cohen, Goodwillie, Hsiang, Madsen~\cite{bokstedt/carlsson/cohen/goodwillie/hsiang/madsen:1996}] \label{thm:BCCGHM}
For a simply-connected finite CW-complex $X$, there is a pullback square
\[ \begin{diagram}
  \node{A(X)} \arrow{e,t}{\tau} \arrow{s} \node{TC(X)} \arrow{s} \\
  \node{A(*)} \arrow{e,t}{\tau} \node{TC(*).}
\end{diagram} \]
\end{theorem}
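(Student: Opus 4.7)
The plan is to reformulate the statement using reduced functors and then attack it with Goodwillie calculus. Define
\[ \widetilde{A}(X) := \operatorname{hofib}(A(X) \to A(*)), \qquad \widetilde{TC}(X) := \operatorname{hofib}(TC(X) \to TC(*)), \]
viewed as reduced functors $\mathscr{T}op_* \to {\mathscr{S}p}$. The square being cartesian is equivalent to the cyclotomic trace inducing an equivalence $\widetilde{A}(X) \xrightarrow{\sim} \widetilde{TC}(X)$ for every simply-connected finite CW-complex $X$.

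The first step is to verify that both functors are $1$-analytic, so that their Taylor towers at $*$ converge on simply-connected finite CW-complexes. For $\widetilde{A}$ this is already recorded earlier in the excerpt. For $\widetilde{TC}$, one reduces to analyticity of $THH$ (a cyclic bar construction to which Blakers--Massey style estimates apply) and checks that the restriction and Frobenius maps used in the construction of $TC$ respect uniform connectivity bounds; these then propagate to the homotopy limit defining $TC$.

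Given convergence, it suffices to show that $\tau$ induces an equivalence $P_n\widetilde{A}(X) \to P_n\widetilde{TC}(X)$ for every $n$. By induction along the fibre sequences $D_n F \to P_n F \to P_{n-1} F$ and the classification of homogeneous functors in Theorem~\ref{thm:hom}, this reduces to showing that $\tau$ induces equivalences of $\Sigma_n$-spectra $\partial_n\widetilde{A} \xrightarrow{\sim} \partial_n\widetilde{TC}$. The case $n = 1$ is the classical identification of stable $K$-theory of $X$ with the topological Hochschild homology of the spherical group ring of $\Omega X$; the higher derivatives admit similar descriptions in terms of cyclic bar constructions, and the trace is manifestly natural in this cyclic data.

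The main obstacle is this identification of the higher derivatives and the verification that $\tau$ realises the expected map after the identification. One has to unpack Waldhausen's $S_\bullet$-construction to identify the cross effects of $A$ with the cyclic diagrams computing the derivatives of $THH$, and then transport these through the homotopy limit defining $TC$, checking that the cyclotomic structure is preserved throughout. This matching of cyclic structures is the technical heart of~\cite{bokstedt/carlsson/cohen/goodwillie/hsiang/madsen:1996} and is where the detailed nature of the cyclotomic trace genuinely enters; the Goodwillie calculus formalism merely packages the induction that reduces the global statement to this layer-by-layer comparison.
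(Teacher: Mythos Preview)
Your reduction to the reduced functors $\widetilde{A}$ and $\widetilde{TC}$ is correct, and invoking $1$-analyticity to get convergence on simply-connected $X$ is the right setup. But the inductive scheme you outline is not the one the paper (or the original reference) uses, and it incurs an extra cost.

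The paper does \emph{not} compare all derivatives $\partial_n$ at the single base object $*$. Instead, the main step is to show that $\tau$ induces an equivalence on the \emph{first} derivatives of $A$ and $TC$ at \emph{every} base space $X$. That suffices: if $\operatorname{hofib}(\tau)$ has trivial first derivative at every $X$, then by the general theory of calculus it is ``locally constant'', meaning it sends any suitably connected map of spaces to an equivalence of spectra. For simply-connected $X$ the map $X \to *$ is $1$-connected, so $\operatorname{hofib}(\tau)(X) \simeq \operatorname{hofib}(\tau)(*)$, which is exactly the pullback statement.

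Your route --- matching $\partial_n\widetilde{A} \simeq \partial_n\widetilde{TC}$ for all $n$ at $*$ --- would also work in principle, but it obliges you to compute the higher Goodwillie derivatives of $TC$ at $*$ and to verify that $\tau$ realises a $\Sigma_n$-equivariant equivalence $\Sigma^\infty(\Sigma_n/C_n)_+ \simeq \partial_n\widetilde{TC}$ for every $n$. You gesture at ``cyclic bar constructions'' for this, but extracting higher derivatives from the homotopy limit of fixed-point spectra that defines $TC$ is genuinely delicate, and this is not how the original argument proceeds. The first-derivative-everywhere approach reduces everything to the single classical input (stable $K$-theory $\simeq$ $THH$), avoiding the higher-$n$ bookkeeping entirely. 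So your proposal is a plausible alternative strategy, but the step you flag as ``the technical heart'' is both harder than, and different from, the one actually carried out.
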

The main step in the proof of Theorem~\ref{thm:BCCGHM} is to show that the cyclotomic trace $\tau$ induces equivalences between the first derivatives of $A$ and $TC$ (at an arbitrary space $X$). The general theory of calculus then implies that the fibre of $\tau$ is ``locally constant'', i.e. takes a suitably connected map of spaces to an equivalence of spectra.

The first derivative of $A$ at the space $X$ can be thought of as a parameterized spectrum over the space $X$ with fibre over $x \in X$ given by
\[ \partial_1A(X)_x \simeq \Sigma^\infty(\Omega_xX)_+ \]
In \cite{goodwillie:2003}, Goodwillie generalizes this formula to higher derivatives. The $n$-th derivative is a spectrum parameterized over $X^n$ and for simplicity, we will only describe the case $X = *$, where
\begin{equation} \label{eq:derA} \partial_nA \simeq \Sigma^\infty(\Sigma_n/C_n)_+ \end{equation}
and where $C_n$ is an order $n$ cyclic subgroup of the symmetric group $\Sigma_n$.

What can we now say about the Taylor tower of $A$? Firstly, a choice of basepoint for a space $Y$ determines a splitting
\[ A(Y) \simeq A(*) \times \tilde{A}(Y) \]
where $\tilde{A}: \mathscr{T}op_* \to {\mathscr{S}p}$ is the corresponding reduced functor. Next, Waldhausen's splitting result \cite{waldhausen:1985} implies that the $1$-excisive approximation to $A$ splits off too, so we have
\[ A(Y) \simeq A(*) \times \Sigma^\infty Y \times \tilde{\operatorname{Wh}}(Y) \]
where $\tilde{\operatorname{Wh}}(Y)$ is the ``reduced Whitehead spectrum'' of $Y$, which contains all of the higher degree information from the Taylor tower of $A$.

The formula (\ref{eq:derA}) implies the following simple calculation of the layers of the Taylor tower for $A$: for $n \geq 2$ we have (using the chosen basepoint for $Y$):
\[ D_nA(Y) \simeq (\Sigma^\infty Y)^{\wedge n}_{hC_n}. \]
How are these layers attached to each other to form $\tilde{\operatorname{Wh}}(Y)$?

Recall that Theorem~\ref{thm:framed} gave us conditions for the Taylor tower of a functor $F: \mathscr{T}op_* \to {\mathscr{S}p}$ to be determined by a $KE_n$-module structure on $\partial_*F$. The following result of \cite{arone/ching:2017} applies this to the functor $A$.

\begin{theorem} \label{thm:A}
The divided power module structure on $\partial_*A$, and hence the Taylor tower of $A$, is determined by a certain $KE_3$-module structure on $\partial_*A$.
\end{theorem}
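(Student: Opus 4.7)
The plan is to verify the criterion of Theorem~\ref{thm:framed} for the reduced functor $\tilde{A}: \mathscr{T}op_* \to {\mathscr{S}p}$. That is, I would show that each polynomial approximation $P_n\tilde{A}$ is the left Kan extension along the inclusion $f\mathscr{M}an_3 \hookrightarrow \mathscr{T}op_*$ of its restriction to pointed framed 3-manifolds; Theorem~\ref{thm:framed} then delivers the required $KE_3$-module structure on $\partial_*A = \partial_*\tilde{A}$, and, via the map $\partial_*I \to KE_3$, the divided power $\partial_*I$-module structure that determines the Taylor tower.

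First I would use Goodwillie's formula $\partial_n A \simeq \Sigma^\infty(\Sigma_n/C_n)_+$ to get geometric control of the derivatives. The presence of the cyclic subgroup $C_n$ is the crucial clue: it reflects the fact that Waldhausen's $A$-theory is intimately tied to cyclic bar constructions (this is what makes the cyclotomic trace into $TC$ so effective). Since cyclic rotations sit naturally inside $SO(3)$ as rotations about an axis, this is where the dimension $3$ enters. I would then construct a candidate $KE_3$-module structure on $\partial_*A$ by building action maps
\[ KE_3(k) \wedge \partial_{n_1}A \wedge \dots \wedge \partial_{n_k}A \to \partial_{n_1+\dots+n_k}A \]
from cyclic embeddings $C_{n_1} \times \cdots \times C_{n_k} \hookrightarrow C_{n_1+\dots+n_k}$ together with the operadic structure on $KE_3$. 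Since $KE_3$ has free symmetric group actions, any such module is automatically divided power, and one checks by direct inspection that the resulting $\partial_*I$-module structure (pulled back along $\partial_*I \to KE_3$) agrees with the known one coming from the chain rule.

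Next I would verify the left Kan extension property of Theorem~\ref{thm:framed}. For a framed 3-manifold $M$ with one-point compactification $M_+$, Waldhausen's splitting together with the computation of layers gives
\[ D_n\tilde{A}(M_+) \simeq (\Sigma^\infty M_+)^{\wedge n}_{hC_n}, \]
and I would identify the right-hand side with a configuration-space model built from framed points in $M$. The naturality in pointed framed embeddings assembles these into a functor on $f\mathscr{M}an_3$ whose left Kan extension to $\mathscr{T}op_*$ recovers $P_n\tilde{A}$; the key input is that smooth framed embeddings of a disjoint union of framed discs into $M$ assemble (via a standard density argument) into the full category of pointed spaces over the suitably truncated polynomial approximation.

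The main obstacle is precisely this last step: producing an honest identification of $P_n\tilde{A}$ restricted to $f\mathscr{M}an_3$ with a functor whose left Kan extension computes $P_n\tilde{A}$ on all of $\mathscr{T}op_*$. This requires combining Waldhausen's geometric description of $A$ in terms of parameterized spectra with the manifold/factorization-homology viewpoint hinted at in the remark following Theorem~\ref{thm:framed}. Once the comparison is made on framed 3-manifolds, the polynomial (and hence finitary) nature of $P_n\tilde{A}$ propagates the equivalence to arbitrary pointed spaces, completing the verification.
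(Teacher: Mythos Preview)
Your overall strategy---apply the criterion of Theorem~\ref{thm:framed} with $n=3$---is exactly the route the paper indicates (the result is stated immediately after Theorem~\ref{thm:framed} as an application of it, with proof deferred to~\cite{arone/ching:2017}). So at the level of ``which theorem to invoke'' you are on target.

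However, the middle paragraph of your proposal is confused and contains an actual error. Theorem~\ref{thm:framed} is an if-and-only-if: once you verify that $P_n\tilde{A}$ is left Kan extended from $f\mathscr{M}an_3$, the $KE_3$-module structure on $\partial_*\tilde{A}$ and the identification of the induced divided power $\partial_*I$-module with the one classifying the Taylor tower are \emph{outputs} of the theorem, not something you must construct and check separately. Your attempt to build the module structure ``by hand'' via maps induced from ``cyclic embeddings $C_{n_1}\times\cdots\times C_{n_k}\hookrightarrow C_{n_1+\cdots+n_k}$'' cannot work as written: such embeddings do not exist in general (a product of nontrivial cyclic groups is not cyclic unless the orders are pairwise coprime). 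The heuristic ``$C_n\subset SO(2)\subset SO(3)$'' is suggestive but does not by itself explain why dimension $2$ is insufficient; in the actual argument the extra dimension is needed for a more delicate reason.

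The real content, as you correctly flag, is the Kan-extension verification, and here your sketch is too thin to count as a plan. Saying ``configuration-space model'' and ``standard density argument'' does not identify which functor on $f\mathscr{M}an_3$ you are Kan extending, nor why its extension agrees with $P_n\tilde{A}$ on all pointed spaces. The argument in~\cite{arone/ching:2017} proceeds by producing a specific geometric model for $\tilde{A}$ (or its polynomial approximations) on pointed framed $3$-manifolds and then checking the Kan-extension property; you would need to supply that model and the comparison, not merely assert their existence.
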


Note that the $KE_3$-module structure on $\partial_*A$ pulls back to a $KE_n$ structure for any $n > 3$, but we do not know if in fact it comes from a $KE_2$-, or $KE_1$-module.

\begin{problem}
Extract from the proof of Theorem~\ref{thm:A} an explicit description of the $KE_3$-module structure on $\partial_*A$, and a corresponding construction of the Taylor tower of $A$.
\end{problem}

\subsection*{Algebraic K-theory of rings}

For an $A_\infty$-ring spectrum $R$, the $K$-theory spectrum $K(R)$ is defined as the algebraic $K$-theory\index{algebraic $K$-theory of ring spectra} of the subcategory of compact objects in the $\infty$-category of $R$-modules \cite[VI.3.2]{elmendorf/kriz/mandell/may:1997}. Our goal in this section is to describe what is known about the Taylor towers of the functor $K$ defined in this way.

It is easiest to describe results in the pointed case, i.e. for augmented algebras. Let us fix an $A_\infty$-ring spectrum $R$, and let $\mathscr{A}lg^{\mathrm{aug}}_R$ denote the $\infty$-category of augmented $R$-algebras. We are interested in the Taylor tower (at the terminal object $R$ of $\mathscr{A}lg^{\mathrm{aug}}_R$) of the functor
\[ \tilde{K}_R: \mathscr{A}lg^{\mathrm{aug}}_R \to {\mathscr{S}p} \]
given by $\tilde{K}_R(A) := \operatorname{hofib}(K(A) \to K(R))$.

The first calculation of part of the Taylor tower of $\tilde{K}_R$ was made by Dundas and McCarthy in \cite{dundas/mccarthy:1994}.

\begin{theorem}[Dundas-McCarthy] \label{thm:DM}
Let $R$ be the Eilenberg-MacLane spectrum of a discrete ring. Then
\[ \partial_1\tilde{K}_R \simeq \Sigma \operatorname{THH}(R) \]
the topological Hochschild homology of $R$ of B\"{o}kstedt.
\end{theorem}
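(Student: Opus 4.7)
The plan is to identify $\partial_1 \tilde K_R$ by constructing the $1$-excisive approximation to $\tilde K_R$ on the $\infty$-category $\mathscr{A}lg^{\mathrm{aug}}_R$ and comparing it, via the (topological) Dennis trace, to a shift of topological Hochschild homology. By the classification of homogeneous functors (Theorem~\ref{thm:hom}), $P_1 \tilde K_R$ is determined by an exact functor on the stabilization ${\mathscr{S}p}(\mathscr{A}lg^{\mathrm{aug}}_R)$. The terminal object of $\mathscr{A}lg^{\mathrm{aug}}_R$ is $R$, and a standard identification of its stabilization is with the $\infty$-category of $R$-bimodules, under which the infinite loop space functor becomes the square-zero extension $M \mapsto R \oplus M$. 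The task thus reduces to computing $P_1 \tilde K_R(R \oplus M)$ as a linear functor of the bimodule $M$ and specializing at $M = R$.

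The central computation is carried out via the trace map. I would construct the natural Dennis trace $\tilde K_R(A) \to \Omega \operatorname{THH}(R; \bar A)$ (or a suitably shifted variant), where $\bar A$ denotes the augmentation ideal regarded as an $R$-bimodule, and check that it induces an equivalence after applying $P_1$. On the $\operatorname{THH}$ side the linearization is easy: the cyclic bar construction on $R \oplus M$ splits according to ``$M$-degree'', and the summand that is linear in $M$ computes $\operatorname{THH}(R; M)$, which is then its own $1$-excisive approximation. On the $K$-theory side, Waldhausen's $S_\bullet$-construction presents $\tilde K_R(R \oplus M)$ in terms of perfect $(R \oplus M)$-modules, which infinitesimally are perfect $R$-modules $P$ equipped with a derivation-like datum $P \to P \otimes_R M$; linearizing in $M$ picks out exactly the Hochschild-type complex assembling these data. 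Setting $M = R$ then gives $\partial_1 \tilde K_R \simeq \Sigma \operatorname{THH}(R;R) = \Sigma \operatorname{THH}(R)$.

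The principal obstacle is verifying that the trace is a $P_1$-equivalence, i.e.\ that no higher filtration layer of the $S_\bullet$-construction produces an additional term linear in $M$ beyond what the trace already captures. The key tool here is Waldhausen's additivity theorem: it forces exact sequences of perfect $(R\oplus M)$-modules to split in $K$-theory, so that the contribution of an $n$-stage filtration behaves additively and, combined with the fact that an $(R \oplus M)$-module differs from an $R$-module only by an $M$-linear perturbation, acquires vanishing of order $\geq 2$ in $M$ for $n \geq 2$. These are killed by the functor $P_1$. Finally, the suspension shift $\Sigma$ in the answer is natural in this picture: the square-zero extension $M \mapsto R \oplus M$ serves as the ``loop'' functor $\Omega^\infty_{\mathscr{A}lg^{\mathrm{aug}}_R}$ (so that $\Sigma^\infty_{\mathscr{A}lg^{\mathrm{aug}}_R}$ of an augmented algebra is a shifted topological André–Quillen invariant), and inverting this loop under stabilization introduces exactly the degree shift that matches the Hochschild convention for $\operatorname{THH}$.
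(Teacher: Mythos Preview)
The paper does not actually prove this statement: Theorem~\ref{thm:DM} is stated as a result of Dundas and McCarthy with a citation to \cite{dundas/mccarthy:1994}, and the surrounding text only adds the identification of ${\mathscr{S}p}(\mathscr{A}lg^{\mathrm{aug}}_R)$ with $R$-bimodules (via Basterra--Mandell) in order to write down the formula for $D_1\tilde K_R$. There is therefore no ``paper's own proof'' to compare your proposal against.

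That said, your outline is broadly in the spirit of the original Dundas--McCarthy argument: one identifies the stabilization with bimodules via square-zero extensions, uses the trace to compare with $\operatorname{THH}$ with coefficients, and invokes additivity to control the higher-order terms. Two points deserve care if you want to turn this into an actual proof. First, your placement of the shift is muddled: the trace goes from $K$-theory to $\operatorname{THH}$, not to $\Omega\operatorname{THH}$, and the suspension in $\Sigma\operatorname{THH}(R)$ arises from the stabilization convention (as the paper notes, $D_1\tilde K_R(A) \simeq \Sigma\operatorname{THH}(R;\operatorname{taq}_R(A))$), not from a loop on the target of the trace. Second, the sentence ``additivity forces exact sequences to split, so higher filtration layers vanish to order $\geq 2$ in $M$'' is the genuinely hard step and is doing far more work than your sketch acknowledges; in \cite{dundas/mccarthy:1994} this requires a delicate comparison of models for stable $K$-theory and a nontrivial argument that the trace is an equivalence on the linearization, not merely a connectivity estimate. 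Your proposal names the right ingredients but does not supply that argument.
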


To recover the first layer $D_1\tilde{K}_R$ of the Taylor tower from Theorem~\ref{thm:DM}, we need to know the stabilization of the category of augmented $R$-algebras. By work of Basterra and Mandell~\cite{basterra/mandell:2005}, this is equivalent to the category of $R$-bimodules. The suspension spectrum construction takes an augmented $R$-algebra $A$ to its \emph{topological Andr\'{e}-Quillen homology} $\operatorname{taq}_R(A)$, a derived version of $I/I^2$ where $I$ is the augmentation ideal of $A$. We then have
\[ D_1\tilde{K}_R(A) \simeq \Sigma \operatorname{THH}(R;\operatorname{taq}_R(A)) \]
where $\operatorname{THH}(R;M) := R \wedge_{R \wedge R^{op}} M$ is the topological Hochschild homology with coefficients.

Lindenstrauss and McCarthy~\cite{lindenstrauss/mccarthy:2012} have extended Theorem~\ref{thm:DM} to higher layers in the following way. The generalization to all connective ring spectra is due to Pancia~\cite{pancia:2014}.

\begin{theorem}[Lindenstrauss-McCarthy, Pancia] \label{thm:LM}
Let $R$ be a connective ring spectrum. Then for an $R$-bimodule $A$
\[ D_n\tilde{K}_R(A) \simeq \Sigma U^n(R;\operatorname{taq}_R(A))_{hC_n} \]
where $U^n(R;M)$ is a generalization of $\operatorname{THH}(R;M)$ given by the cyclic tensoring of $n$ copies of a bimodule $M$ over $R$, with action of the cyclic group $C_n$ given by permutation.
\end{theorem}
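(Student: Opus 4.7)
The plan is to combine Theorem~\ref{thm:hom} with the identification of the stabilization of augmented $R$-algebras. By \cite{basterra/mandell:2005}, the stabilization $\mathscr{Sp}(\mathscr{A}lg^{\mathrm{aug}}_R)$ is equivalent to the $\infty$-category $\operatorname{Bimod}_R$ of $R$-bimodules, with the suspension-spectrum functor given by topological André–Quillen homology $\operatorname{taq}_R$. Theorem~\ref{thm:hom} therefore tells us that there exists a symmetric multilinear functor
\[ H_n: \operatorname{Bimod}_R^n \to {\mathscr{S}p} \]
with
\[ D_n\tilde{K}_R(A) \simeq H_n(\operatorname{taq}_R(A),\ldots,\operatorname{taq}_R(A))_{h\Sigma_n}. \]
The bulk of the argument is to identify $H_n$ explicitly.

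To compute $H_n$, I would evaluate the $n$-th cross-effect of $\tilde{K}_R$ on square-zero extensions $R \oplus M_i$ of $R$ by bimodules $M_1,\ldots,M_n$, following the strategy of Dundas–McCarthy \cite{dundas/mccarthy:1994} in the case $n=1$. The functor $\tilde{K}_R(R \oplus M)$ can be modelled by Waldhausen's $S_\bullet$ construction applied to the category of compact $R \oplus M$-modules, and the cross-effect extracts the ``multiplicatively $n$-fold nilpotent'' part of the $K$-theory of iterated square-zero extensions. The key geometric input is that this multilinear piece is detected by the cyclotomic trace; after linearization in each variable one is reduced to studying the relevant $\operatorname{THH}$-like construction, namely the functor $(M_1,\ldots,M_n) \mapsto U^n(R;M_1,\ldots,M_n)$ that tensors the $M_i$ in cyclic order around a circle over copies of $R$. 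A suspension appears because of the standard shift between the derivatives of $\tilde{K}$ and $\operatorname{THH}$-type functors (visible already in the Dundas–McCarthy formula $\partial_1\tilde K_R \simeq \Sigma\operatorname{THH}(R)$).

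The last step is to reconcile the $h\Sigma_n$ of the general classification with the $hC_n$ appearing in the theorem. The construction $U^n(R;M_1,\ldots,M_n)$ carries only a cyclic $C_n$-symmetry (from rotation of the circle), not the full $\Sigma_n$-symmetry required by a symmetric multilinear functor. The natural way to promote a $C_n$-equivariant object to a $\Sigma_n$-equivariant multilinear functor is by induction, so one identifies
\[ H_n(M_1,\ldots,M_n) \simeq \Sigma\,\operatorname{Ind}_{C_n}^{\Sigma_n} U^n(R;M_1,\ldots,M_n), \]
and then the standard adjunction between induction and orbits gives
\[ H_n(M,\ldots,M)_{h\Sigma_n} \simeq \Sigma\, U^n(R;M)_{hC_n}, \]
which yields the claimed formula after substituting $M = \operatorname{taq}_R(A)$. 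For the generalization from Eilenberg–MacLane spectra of discrete rings (Lindenstrauss–McCarthy) to connective ring spectra (Pancia), the same argument applies essentially verbatim once one works in an $\infty$-categorical setting where $\operatorname{taq}_R$ and $U^n$ are available for any connective $R$.

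The main obstacle is the explicit calculation of the cross-effect of $\tilde{K}_R$ in step two: understanding why the iterated square-zero deformation is governed precisely by a cyclic, rather than symmetric, tensor arrangement of the bimodules. This is where the $C_n$-symmetry (as opposed to $\Sigma_n$-symmetry) of the final answer originates, and it reflects the deep fact that $K$-theory is detected by trace methods into cyclic homology-type theories. The rest of the proof — applying Theorem~\ref{thm:hom}, using Basterra–Mandell, and reindexing orbits via induction — is essentially formal once the multilinear functor is correctly identified.
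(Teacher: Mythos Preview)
Your formal framework is sound --- Theorem~\ref{thm:hom} plus Basterra--Mandell plus the induction/orbits adjunction is exactly the right scaffolding, and your explanation of how the $C_n$ (rather than $\Sigma_n$) arises via $H_n \simeq \Sigma\,\operatorname{Ind}_{C_n}^{\Sigma_n} U^n$ is correct. But the route differs from the one the paper indicates, and the step you yourself flag as the ``main obstacle'' is essentially the entire theorem.

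The paper does not give a self-contained proof either, but it records that Lindenstrauss and McCarthy do \emph{not} proceed by computing cross-effects on square-zero extensions. Instead they compute the full Taylor tower of the composite $\tilde{K}_R \circ T_R$, where $T_R(M) = \bigoplus_{n \geq 0} M^{\otimes_R n}$ is the free tensor $R$-algebra on a bimodule, obtaining
\[ P_n(\tilde{K}_R T_R)(M) \simeq \operatorname{holim}_{k \leq n} \Sigma\, U^k(R;M)^{C_k} \]
as a $TR$-style tower assembled from homotopy fixed points along restriction maps $U^k(R;M)^{C_k} \to U^l(R;M)^{C_l}$ for $l \mid k$. The layers of $\tilde{K}_R$ are then extracted from this. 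So the test objects are free tensor algebras, not square-zero extensions, and what is actually computed is the whole tower of the composite, not a single cross-effect. This buys more than the layers: it also gives the attaching data (after precomposition with $T_R$), which is why fixed points and restriction maps appear rather than just orbits.

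Your proposed step --- ``evaluate the $n$-th cross-effect on square-zero extensions and observe that after linearization it is governed by the cyclic tensor construction $U^n$'' --- is not a lemma one can cite; it \emph{is} the Lindenstrauss--McCarthy computation, and its actual proof goes through the tensor-algebra route above rather than a direct analysis of iterated square-zero deformations. So as written, your proposal is a correct reformulation of what must be shown, packaged in the language of Theorem~\ref{thm:hom}, but not an independent proof strategy.
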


The proof of Theorem~\ref{thm:LM} is a consequence of Lindenstrauss and McCarthy's calculation of the complete Taylor tower of the functor
\[ \tilde{K}_R(T_R(-)) : \operatorname{Bimod}_R \to {\mathscr{S}p} \]
where $T_R: \operatorname{Bimod}_R \to \mathscr{A}lg^{\mathrm{aug}}_R$ is the free tensor $R$-algebra functor given by
\[ T_R(M) := \bigoplus_{n \geq 0} M^{\otimes_R}. \]
It is shown in~\cite{lindenstrauss/mccarthy:2012} that
\[ P_n(\tilde{K}_RT_R)(M) \simeq \operatorname{holim}_{k \leq n} \Sigma U^k(R;M)^{C_k} \]
where the homotopy limit is formed over a diagram of restriction maps
\[ U^k(R;M)^{C_k} \to U^l(R;M)^{C_l} \]
for positive integers $l$ dividing $k$. It is reasonable to expect that more information about the Taylor tower of $\tilde{K}_R$ itself, beyond just the layers, can be extracted from this description, but this has not been done.

One way to approach this question is via the general theory of Section~\ref{sec:operads}. According to Theorem~\ref{thm:operad}, one would expect the derivatives of $\tilde{K}_R$ to be a right module over the derivatives of the identity functor on $\mathscr{A}lg^{\mathrm{aug}}_R$. Theorem~\ref{thm:class} then tells us that the Taylor tower of $\tilde{K}_R$ is determined by the action of a certain comonad on that right module.

Interestingly, and unlike the case for topological spaces, the Taylor tower for the identity functor on $\mathscr{A}lg^{\mathrm{aug}}_R$ is rather easy to describe:
\begin{equation} \label{eq:id} P_nI_{\mathscr{A}lg^{\mathrm{aug}}_R}(A) \simeq I/I^{n+1} \end{equation}
where the right-hand side is a derived version of the quotient of the augmentation ideal $I$ by its $(n+1)$-th power, generalizing the construction of $\operatorname{taq}_R(A) \simeq I/I^2$. This formula seems to have been written down first by Kuhn~\cite{kuhn:2006}, and a more formal version is developed by Pereira~\cite{pereira:2013}. With this calculation it should now be possible to make more progress with the calculation of the Taylor tower of $\tilde{K}_R$.

\begin{remark}
One final remark on the connection between algebraic $K$-theory and Goodwillie calculus is worth making here. Barwick~\cite{barwick:2016} identifies the process of forming algebraic $K$-theory itself as the first layer of a Taylor tower. He defines an $\infty$-category $\mathscr{W}ald_\infty$ of \emph{Waldhausen $\infty$-categories} that are $\infty$-category-theoretic analogues of Waldhausen's categories with cofibrations. He then identifies the algebraic $K$-theory functor as
\[ K \simeq P_1(\iota) \]
where $\iota: \mathscr{W}ald_\infty \to \mathscr{T}op$ is the functor that sends a Waldhausen $\infty$-category to its underlying $\infty$-groupoid of objects. The $1$-excisive property for $K$ is a version of Waldhausen's Additivity Theorem, and this result identifies $K$-theory as the universal example of an additive theory on $\mathscr{W}ald_\infty$ together with a natural transformation from $\iota$.
\end{remark}

\section{Taylor Towers of Infinity-Categories}

Recent work of Heuts has taken Goodwillie calculus in a new direction. In \cite{heuts:2018} he constructs, for each pointed compactly-generated $\infty$-category $\mathscr{C}$, a \emph{Taylor tower of $\infty$-categories}\index{Taylor tower!of an $\infty$-category} for $\mathscr{C}$. This is a sequence of adjunctions
\begin{equation} \label{eq:heuts} \mathscr{C} \rightleftarrows \dots \rightleftarrows \mathscr{P}_n\mathscr{C} \rightleftarrows \mathscr{P}_{n-1}\mathscr{C} \rightleftarrows \dots \rightleftarrows \mathscr{P}_1\mathscr{C} \end{equation}
where $\mathscr{P}_{n}\mathscr{C}$ is a universal approximation to $\mathscr{C}$ by an \emph{$n$-excisive $\infty$-category}.

This approximation has the property that the identity functor on $\mathscr{P}_{n}\mathscr{C}$ is $n$-excisive, and both the unit and counit of the adjunction $\mathscr{C} \rightleftarrows \mathscr{P}_n\mathscr{C}$ are $P_n$-equivalences. In the case $n = 1$, we have $\mathscr{P}_1\mathscr{C} \simeq {\mathscr{S}p}(\mathscr{C})$, the stabilization of $\mathscr{C}$. Thus, the sequence above can be viewed as interpolating between $\mathscr{C}$ and its stabilization via $\infty$-categories that better and better approximate the potentially unstable $\infty$-category $\mathscr{C}$.

Heuts's main results concern the classification of $n$-excisive $\infty$-categories such as $\mathscr{P}_n\mathscr{C}$. Just as in the classification of Taylor towers of functors, this process is broken down into two parts: one \emph{operadic} and the other related to the \emph{Tate} construction.

The first part associates to the $\infty$-category $\mathscr{C}$ the \emph{cooperad} constructed by Lurie that represents the derivatives of the functor $\Sigma^\infty_{\mathscr{C}}\Omega^\infty_{\mathscr{C}}$, as described in Remark~\ref{rem:lurie}. (Recall that this cooperad is actually an $\infty$-\emph{operad} in Lurie's terminology.) The notation for this cooperad in \cite{heuts:2018} is ${\mathscr{S}p}(\mathscr{C})^{\otimes}$, but we will denote it by $\partial_*(\Sigma^\infty_{\mathscr{C}}\Omega^\infty_{\mathscr{C}})$.

\begin{proposition} \label{prop:coalg}
Let $\mathscr{C}$ be a pointed compactly-generated $\infty$-category. Then the suspension spectrum construction for $\mathscr{C}$ can be made into a functor
\[ \Sigma^\infty_{\mathscr{C}} : \mathscr{C} \to \operatorname{Coalg}(\partial_*(\Sigma^\infty_{\mathscr{C}}\Omega^\infty_{\mathscr{C}})) \]
where the right-hand side is the $\infty$-category of non-unital coalgebras over the cooperad $\partial_*(\Sigma^\infty_{\mathscr{C}}\Omega^\infty_{\mathscr{C}})$.
\end{proposition}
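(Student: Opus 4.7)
My approach is to refine the standard comonadic factorization of $\Sigma^\infty_{\mathscr{C}}$ to land in coalgebras over the cooperad $\partial_*(\Sigma^\infty_{\mathscr{C}}\Omega^\infty_{\mathscr{C}})$, using the fact that the cooperad structure on these derivatives is induced precisely by the comonad structure on $T := \Sigma^\infty_{\mathscr{C}} \Omega^\infty_{\mathscr{C}}$.

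First, I would recall the canonical comonadic lift. The adjunction $(\Sigma^\infty_{\mathscr{C}}, \Omega^\infty_{\mathscr{C}})$ makes $T$ into a comonad on ${\mathscr{S}p}(\mathscr{C})$ whose comultiplication $T \to T^2$ is induced by the unit of the adjunction. For each $X \in \mathscr{C}$, applying $\Sigma^\infty_{\mathscr{C}}$ to the unit $X \to \Omega^\infty_{\mathscr{C}} \Sigma^\infty_{\mathscr{C}} X$ produces a coaction $\Sigma^\infty_{\mathscr{C}} X \to T \Sigma^\infty_{\mathscr{C}} X$, and formally the left adjoint $\Sigma^\infty_{\mathscr{C}}$ lifts to an $\infty$-functor $\mathscr{C} \to \operatorname{Comod}_T({\mathscr{S}p}(\mathscr{C}))$.

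Next I would transport this comodule structure along the derivative functor. By Lurie's construction in \cite[6.3.0.14]{lurie:2017}, the cooperad $\partial_*(T)$ encodes the Goodwillie derivatives of $T$ together with a cocomposition extracted from $T \to T^2$ via the chain rule identification $\partial_*(T^k) \simeq \partial_*(T)^{\circ k}$ (Theorem~\ref{thm:operad} and its variants for functors between stable $\infty$-categories). Given a $T$-comodule $E$, iterating the coaction produces a compatible family of maps $E \to T^k E$; postcomposing with the projection from $T^k E$ onto the $n$-th layer of its Goodwillie tower and unpacking via the classification of homogeneous functors yields structure maps $E \to (\partial_n T \wedge E^{\wedge n})^{h\Sigma_n}$ that are, by the very construction of the cocomposition from the comonad multiplication, coherent with the cooperad structure of $\partial_* T$.

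The hard part will be upgrading these pointwise constructions into a genuine functor of $\infty$-categories rather than a homotopy-level recipe on objects. For this I would work inside Lurie's framework of stable corepresentable $\infty$-operads, where a coalgebra over $\partial_*(T)$ is encoded as a section of a certain cocartesian fibration that corepresents the symmetric sequence $\partial_* T$. The comonadic lift from the first step, combined with the functoriality of the Goodwillie tower in the comonad, should produce such a section essentially formally, yielding a factorization of $\Sigma^\infty_{\mathscr{C}}: \mathscr{C} \to \operatorname{Comod}_T({\mathscr{S}p}(\mathscr{C}))$ through the forgetful functor $\operatorname{Coalg}(\partial_*(T)) \to \operatorname{Comod}_T({\mathscr{S}p}(\mathscr{C}))$. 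The genuine technical work lies in checking that Lurie's combinatorial construction of the cooperad agrees with the informal derivatives-plus-chain-rule description used above, so that the coaction maps extracted from a $T$-comodule really do assemble into a coalgebra in Lurie's precise sense; this is where the appeal to the chain rule in the stable setting (essentially \eqref{eq:chain}) is essential.
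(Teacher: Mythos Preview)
The paper does not actually prove this proposition. This is a survey article, and Proposition~\ref{prop:coalg} is stated without proof as part of an exposition of Heuts's results from \cite{heuts:2018}; the paper simply records the statement and then illustrates it with the example $\mathscr{C}=\mathscr{T}op_*$. So there is no ``paper's own proof'' to compare your proposal against.

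That said, your outline is a reasonable sketch of how the construction goes in Heuts's work. The basic mechanism you describe --- lift $\Sigma^\infty_{\mathscr{C}}$ to $T$-comodules via the adjunction, then pass from $T$-comodules to $\partial_*T$-coalgebras by projecting the iterated coaction maps $E\to T^kE$ onto homogeneous layers --- is the right idea. You are also correct to flag that the genuine difficulty lies in assembling these structure maps coherently into a functor of $\infty$-categories; this is exactly where Heuts does substantial work, and your proposal is honest in not claiming to have resolved it. One small caution: the direction of the factorization you describe at the end is slightly off. A $\partial_*T$-coalgebra is, roughly, \emph{less} structure than a full $T$-comodule (the cooperad only sees the homogeneous layers of $T$, not the attaching data), so one should expect a functor $\operatorname{Comod}_T\to\operatorname{Coalg}(\partial_*T)$ rather than a forgetful functor the other way. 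Your construction really produces the composite $\mathscr{C}\to\operatorname{Comod}_T\to\operatorname{Coalg}(\partial_*T)$, which is what is wanted.
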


\begin{example}
For $\mathscr{C} = \mathscr{T}op_*$, the cooperad $\partial_*(\Sigma^\infty_{\mathscr{C}}\Omega^\infty_{\mathscr{C}})$ is the commutative cooperad in ${\mathscr{S}p}$. In this case the functor in Proposition~\ref{prop:coalg} associates to a based space $X$ the commutative coalgebra structure on $\Sigma^\infty X$ given by the diagonal on $X$.
\end{example}

The category of coalgebras described in Proposition~\ref{prop:coalg} can be thought of as the best approximation to the $\infty$-category $\mathscr{C}$ based on the information in the cooperad $\partial_*(\Sigma^\infty_{\mathscr{C}}\Omega^\infty_{\mathscr{C}})$. The main focus of \cite{heuts:2018} is an understanding of the additional information needed to recover $\mathscr{C}$ itself (or at least its Taylor tower) from this category of coalgebras.

Heuts identifies $\mathscr{P}_n\mathscr{C}$ with an $\infty$-category of \emph{$n$-truncated Tate coalgebras} over the cooperad $\partial_*(\Sigma^\infty_{\mathscr{C}}\Omega^\infty_{\mathscr{C}})$. We do not have space here to provide a precise definition of Tate coalgebra, but we can give the general idea. Full details are in \cite{heuts:2018}.

An $n$-truncated Tate coalgebra over a cooperad $\mathscr{Q}$ consists first of an ordinary (truncated) $\mathscr{Q}$-coalgebra structure on an object $E$ in $\mathscr{P}_1\mathscr{C} \simeq {\mathscr{S}p}(\mathscr{C})$. That is, for $k \leq n$, we have structure maps of the form
\[ c_k: E \to [\mathscr{Q}(k) \wedge E^{\wedge k}]^{h\Sigma_k}. \]
Each map $c_k$ is then required to be compatible (via the canonical map from fixed points to the Tate construction) with a certain natural transformation
\[ t_k: E \to [\mathscr{Q}(k) \wedge E^{\wedge k}]^{t\Sigma_k}, \]
that Heuts calls the \emph{Tate diagonal} for $\mathscr{C}$.\index{Tate diagonal}

Crucially, the natural transformation $t_k$ is defined only when $E$ is already a $(k-1)$-truncated Tate $\mathscr{Q}$-coalgebra, and $t_k$ must be compatible, via the cooperad structure on $\mathscr{Q}$, with the maps $c_1,\dots,c_{k-1}$. Also note that the Tate diagonal $t_k$ depends on the $\infty$-category $\mathscr{C}$ and not just the cooperad $\mathscr{Q}$. It is the choice of these maps that carries the extra information needed to reconstruct the Taylor tower of $\mathscr{C}$.

We can now state Heuts's main result from \cite{heuts:2018}.

\begin{theorem} \label{thm:heuts-class}
Let $\mathscr{C}$ be a pointed compactly-generated $\infty$-category with $\mathscr{Q} := \partial_*(\Sigma^\infty_{\mathscr{C}}\Omega^\infty_{\mathscr{C}})$, the corresponding cooperad in the stable $\infty$-category ${\mathscr{S}p}(\mathscr{C})$. Then there is a unique sequence of Tate diagonals, i.e. natural transformations
\[ t_k: E \to [\mathscr{Q}(k) \wedge E^{\wedge k}]^{t\Sigma_k} \]
where $t_k$ is defined for $E \in \mathscr{P}_{k-1}\mathscr{C}$, such that $\mathscr{P}_k\mathscr{C}$ is equivalent to the ind-completion of the $\infty$-category of $k$-truncated Tate $\mathscr{Q}$-coalgebras whose underlying object of ${\mathscr{S}p}(\mathscr{C})$ is compact.
\end{theorem}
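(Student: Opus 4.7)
The plan is to proceed by induction on $k$. The base case $k = 1$ is immediate: $\mathscr{P}_1\mathscr{C} \simeq {\mathscr{S}p}(\mathscr{C})$ by definition, a $1$-truncated Tate $\mathscr{Q}$-coalgebra is just an object of ${\mathscr{S}p}(\mathscr{C})$, and no Tate diagonal need be specified; restricting to compact objects and ind-completing gives the claim.

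For the inductive step, assume $t_1,\ldots,t_{k-1}$ have been constructed together with an equivalence $\mathscr{P}_{k-1}\mathscr{C} \simeq \operatorname{Ind}(\operatorname{TCoalg}_{k-1}(\mathscr{Q})^{\mathrm{cpt}})$. I would present both $\mathscr{P}_k\mathscr{C}$ and the target $\operatorname{TCoalg}_k(\mathscr{Q})$ as comonadic over $\mathscr{P}_{k-1}\mathscr{C}$ via forgetful functors, and then match the comonads. Comonadicity of $\mathscr{P}_k\mathscr{C} \to \mathscr{P}_{k-1}\mathscr{C}$ should follow from the $\infty$-categorical Barr--Beck criterion applied to the adjunction~(\ref{eq:heuts}), using that the fibre of this forgetful functor is a homogeneous $k$-excisive $\infty$-category, itself classified by the spectrum $\mathscr{Q}(k) = \partial_k(\Sigma^\infty_{\mathscr{C}}\Omega^\infty_{\mathscr{C}})$ with its $\Sigma_k$-action via the analogue of Theorem~\ref{thm:hom}. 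The cofree coalgebra for the resulting comonad $\mathbf{T}_k$ on $E \in \mathscr{P}_{k-1}\mathscr{C}$ should then be a pullback
\[ \mathbf{T}_k(E) \simeq E \times_{[\mathscr{Q}(k) \wedge E^{\wedge k}]^{t\Sigma_k}} [\mathscr{Q}(k) \wedge E^{\wedge k}]^{h\Sigma_k}, \]
modelled directly on McCarthy's square (Theorem~\ref{thm:McCarthy}), in which the map $E \to [\mathscr{Q}(k) \wedge E^{\wedge k}]^{t\Sigma_k}$ occurring as the left leg is the Tate diagonal $t_k$ we wish to identify.

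For existence of $t_k$ as a natural transformation, I would extract it from the unit map $\Sigma^\infty_{\mathscr{C}}\Omega^\infty_{\mathscr{C}} \to P_k(\Sigma^\infty_{\mathscr{C}}\Omega^\infty_{\mathscr{C}})$ together with the classification of the $k$-th layer, in analogy with Remark~\ref{rem:lurie}: the cobar-style argument sketched there, applied to $\mathscr{C}$ in place of $\mathscr{T}op_*$, simultaneously produces the cooperad structure on $\mathscr{Q}$ and the compatibility of $t_k$ with the lower $t_j$. Uniqueness follows from the norm cofibre sequence $(-)_{h\Sigma_k} \to (-)^{h\Sigma_k} \to (-)^{t\Sigma_k}$: once the $(k-1)$-truncated Tate coalgebra structure and the homotopy-fixed-point map $c_k$ have been fixed, the space of compatible Tate diagonals is contractible, for the obstruction to lifting $c_k$ to a genuine fixed-point datum is by definition classified by the Tate term.

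The main obstacle is matching the two comonads \emph{coherently}, not merely on underlying objects. The pullback formula above identifies $\mathbf{T}_k(E)$ at the level of ${\mathscr{S}p}(\mathscr{C})$, but one must also verify that its comultiplication agrees with the one defined via the cooperadic comultiplication of $\mathscr{Q}$ glued using the lower $t_j$'s. This should reduce to unraveling the fact that, in the bar--cobar framework of Remark~\ref{rem:lurie}, Tate diagonals encode precisely the discrepancy between naive and genuine coalgebras over $\mathscr{Q}$, so the cooperad coassociativity translates directly into coassociativity of $\mathbf{T}_k$. Once comonadicity is matched, the induced equivalence of Eilenberg--Moore categories yields the desired identification on compact objects; since both $\mathscr{P}_k\mathscr{C}$ and $\operatorname{TCoalg}_k(\mathscr{Q})$ are compactly generated, ind-completion closes the induction.
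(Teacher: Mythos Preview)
The paper does not contain a proof of this theorem. It is a survey article, and Theorem~\ref{thm:heuts-class} is stated as ``Heuts's main result from \cite{heuts:2018}'' with the explicit disclaimer that ``Full details are in \cite{heuts:2018}.'' The surrounding text only explains informally what a Tate coalgebra is, works through the example $\mathscr{C}=\mathscr{T}op_*$, and records a remark relating the Tate diagonals to the Taylor tower of $\Sigma^\infty_{\mathscr{C}}\Omega^\infty_{\mathscr{C}}$; none of this constitutes a proof. So there is no ``paper's own proof'' to compare your proposal against.

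That said, your outline is broadly in the right spirit and is consistent with the discussion the paper does give: the inductive structure, the role of a McCarthy-type pullback square, and the connection (via the paper's final Remark in that section) between $t_k$ and the map $P_{k-1}(\Sigma^\infty_{\mathscr{C}}\Omega^\infty_{\mathscr{C}})\to[\mathscr{Q}(k)\wedge E^{\wedge k}]^{t\Sigma_k}$ all match what the paper sketches. Two cautions. First, your uniqueness argument is backwards: you argue that once $c_k$ is fixed the compatible $t_k$ is unique, but the theorem asserts uniqueness of the \emph{Tate diagonals} $t_k$ determined by $\mathscr{C}$ itself, before any coalgebra structure map $c_k$ is chosen; this is a statement about natural transformations out of the identity on $\mathscr{P}_{k-1}\mathscr{C}$, not about lifts of a given $c_k$. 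Second, the coherence issue you flag (matching comonads, not just their underlying functors) is exactly where the real work in \cite{heuts:2018} lies, and your sketch does not actually discharge it; invoking ``the bar--cobar framework of Remark~\ref{rem:lurie}'' is a gesture rather than an argument. If you want to turn this into a proof you will need to consult Heuts's paper directly.
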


Note that this is intrinsically an inductive result: the construction of $\mathscr{P}_{k-1}\mathscr{C}$ needs to be made in order to understand the Tate diagonal $t_k$ that allows for the definition of $\mathscr{P}_k\mathscr{C}$. Here is how this process plays out for based spaces.

\begin{example}
We have $\mathscr{P}_1\mathscr{T}op_* \simeq {\mathscr{S}p}$ and $\partial_*(\Sigma^\infty\Omega^\infty) \simeq \mathscr{C}om$, the commutative cooperad of spectra with $\mathscr{C}om(n) \simeq S^0$ for all $n$. The first Tate diagonal has no compatibility requirements and is simply a natural transformation
\[ t_2: Y \to (Y \wedge Y)^{t\Sigma_2} \]
defined for all $Y \in {\mathscr{S}p}$.

Since both the source and target are $1$-excisive functors, such a natural transformation is determined by its value on the sphere spectrum, where it takes the form of a map
\[ t_2: S^0 \to (S^0)^{t\Sigma_2} \simeq (S^0)^{\wedge}_2 \]
whose target is the $2$-complete sphere by the Segal Conjecture~\cite{carlsson:1984}. The specific Tate diagonal that corresponds to the $2$-excisive $\infty$-category $\mathscr{P}_2\mathscr{T}op_*$ is then the ordinary $2$-completion map $S^0 \to (S^0)^{\wedge}_2$.

According to Theorem~\ref{thm:heuts-class}, a (compact) object of $\mathscr{P}_2\mathscr{T}op_*$ consists of a finite spectrum $Y$ with map
\[ c_2: Y \to (Y \wedge Y)^{h\Sigma_2} \]
and a homotopy between $t_2$ and the composite
\[ Y \to (Y \wedge Y)^{h\Sigma_2} \to (Y \wedge Y)^{t\Sigma_2}. \]

Now consider a Tate diagonal for $\mathscr{P}_2\mathscr{T}op_*$. This should be a natural transformation
\[ t_3: Y \to (Y^{\wedge 3})^{t\Sigma_3} \]
that is compatible with the map
\[ Y \to \left[ \prod_3 Y \wedge (Y \wedge Y) \right]^{t\Sigma_3} \]
that is built from iterating $c_2$, where the three copies of $Y \wedge (Y \wedge Y)$ are indexed by the three binary trees with leaves labelled $\{1,2,3\}$, and composing with the map from fixed points to the Tate spectrum. As in the $n = 2$ case, there is a specific such natural transformation $t_3$ that corresponds to $\mathscr{P}_3\mathscr{T}op_*$ though it doesn't seem to be as easy to describe explicitly for general $Y \in \mathscr{P}_2\mathscr{T}op_*$. (For objects $Y$ that are of the form $\Sigma^\infty X$ for a space $X$, $t_3$ is induced by the diagonal on $X$.)

An object of $\mathscr{P}_3\mathscr{T}op_*$ then consists of an object of $\mathscr{P}_2\mathscr{T}op_*$ together with a map
\[ c_3: Y \to (Y^{\wedge 3})^{h\Sigma_3} \]
that lifts $t_3$ and is compatible with $c_2$.
\end{example}

As in the case of Taylor towers of functors, a substantial simplification occurs when the Tate data vanishes, in which case the Taylor tower of the $\infty$-category $\mathscr{C}$ is completely determined by the cooperad $\partial_*(\Sigma^\infty_{\mathscr{C}}\Omega^\infty_{\mathscr{C}})$.

\begin{corollary}[{\cite[6.14]{heuts:2018}}]
Let $\mathscr{C}$ be an $\infty$-category such that for any object $X \in {\mathscr{S}p}(\mathscr{C})$ with $\Sigma_k$-action, the Tate construction $X^{t\Sigma_k}$ is trivial, for all $k \geq 2$. Then $\mathscr{P}_n\mathscr{C}$ is equivalent to the ind-completion of the category of $n$-truncated $\partial_*(\Sigma^\infty_{\mathscr{C}}\Omega^\infty_{\mathscr{C}})$-coalgebras whose underlying object is compact in ${\mathscr{S}p}(\mathscr{C})$.
\end{corollary}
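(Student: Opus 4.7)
The strategy is to apply Theorem~\ref{thm:heuts-class} directly and show that, under the stated vanishing hypothesis, the $\infty$-category of $n$-truncated Tate $\mathscr{Q}$-coalgebras collapses onto that of ordinary $n$-truncated $\mathscr{Q}$-coalgebras, where $\mathscr{Q} := \partial_*(\Sigma^\infty_{\mathscr{C}}\Omega^\infty_{\mathscr{C}})$.

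First, I would invoke Theorem~\ref{thm:heuts-class} to identify $\mathscr{P}_n\mathscr{C}$ with the ind-completion of the $\infty$-category of $n$-truncated Tate $\mathscr{Q}$-coalgebras whose underlying object of ${\mathscr{S}p}(\mathscr{C})$ is compact, equipped with the unique sequence of Tate diagonals $t_k \colon E \to [\mathscr{Q}(k) \wedge E^{\wedge k}]^{t\Sigma_k}$ for $2 \leq k \leq n$ provided by the theorem. The key observation is that under our hypothesis each target $[\mathscr{Q}(k) \wedge E^{\wedge k}]^{t\Sigma_k}$ is a zero object of ${\mathscr{S}p}(\mathscr{C})$, since $\mathscr{Q}(k) \wedge E^{\wedge k}$ carries a $\Sigma_k$-action with $k \geq 2$. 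In particular every Tate diagonal is canonically null.

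Next, I would argue by induction on $k$ that the forgetful functor from $k$-truncated Tate $\mathscr{Q}$-coalgebras to $k$-truncated ordinary $\mathscr{Q}$-coalgebras is an equivalence. The base case $k = 1$ is automatic since no higher structure is present. For the inductive step, a $k$-truncated Tate refinement of a $(k-1)$-truncated one consists of a structure map $c_k \colon E \to [\mathscr{Q}(k) \wedge E^{\wedge k}]^{h\Sigma_k}$ together with a homotopy witnessing that its postcomposition with the canonical map to $[\mathscr{Q}(k) \wedge E^{\wedge k}]^{t\Sigma_k}$ agrees with the prescribed $t_k$ (itself built from lower-arity data via the cooperad structure on $\mathscr{Q}$). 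Because the target of $t_k$ is contractible, both $t_k$ and the compatibility homotopy live in contractible mapping spaces; thus the space of Tate refinements of a given lower-truncated structure is canonically equivalent to the space of maps $c_k$ alone, and one obtains an equivalence of $\infty$-categories at each truncation level. Passing to compact objects and then to ind-completions yields the stated description of $\mathscr{P}_n\mathscr{C}$.

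The main obstacle, which is of a bookkeeping rather than conceptual nature, lies in assembling the pointwise collapse of Tate data into a functorial equivalence of $\infty$-categories. One must exhibit the forgetful functor as induced by a map of towers of $\infty$-categories and verify that its fibers over each level are contractible spaces of lifts, which requires unpacking the inductive definition of Tate coalgebra and using the uniqueness clause in Theorem~\ref{thm:heuts-class} to identify the Tate diagonals on the nose with the zero map. Once this is in place, compatibility with ind-completion and restriction to compact objects is formal.
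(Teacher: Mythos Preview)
Your proposal is correct and is exactly the argument the paper intends: the corollary is stated without proof, immediately after Theorem~\ref{thm:heuts-class}, with the preceding sentence noting that ``a substantial simplification occurs when the Tate data vanishes, in which case the Taylor tower of the $\infty$-category $\mathscr{C}$ is completely determined by the cooperad $\partial_*(\Sigma^\infty_{\mathscr{C}}\Omega^\infty_{\mathscr{C}})$.'' Your write-up simply makes this explicit by observing that the targets of the Tate diagonals are contractible under the hypothesis, so the Tate compatibility conditions become vacuous and the inductive definition of Tate coalgebra collapses to that of an ordinary coalgebra.
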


\begin{remark}
The Tate diagonals for an $\infty$-category $\mathscr{C}$ bear a close relationship to the Taylor tower of the functor $\Sigma^\infty_{\mathscr{C}}\Omega^\infty_{\mathscr{C}}: {\mathscr{S}p}(\mathscr{C}) \to {\mathscr{S}p}(\mathscr{C})$. In particular $t_n$ can be written as a composite
\[ E \to P_{n-1}(\Sigma^\infty_{\mathscr{C}}\Omega^\infty_{\mathscr{C}})(E) \arrow{e,t}{t'_n} [\partial_n(\Sigma^\infty_{\mathscr{C}}\Omega^\infty_{\mathscr{C}}) \wedge E^{\wedge n}]^{t\Sigma_n} \]
where the second map here is the bottom horizontal map in McCarthy's square (\ref{thm:McCarthy}), and the first map contains the structure of an $(n-1)$-truncated Tate coalgebra on $E$. The map on derivatives induced by $t'_n$ can furthermore be identified with the coalgebra structure from Theorem~\ref{thm:class} that classifies the Taylor tower of the functor $\Sigma^\infty_{\mathscr{C}}\Omega^\infty_{\mathscr{C}}$.
\end{remark}

\section{The Manifold and Orthogonal Calculi}

While this article has been focused on Goodwillie's calculus of homotopy functors, there are two other theories of ``calculus'', developed by Michael Weiss, that are inspired by, and related to, Goodwillie calculus to varying degrees. They are called {\it manifold calculus} (initially known as embedding calculus) and {\it orthogonal calculus}. In this section we give a review of these theories and some applications they have had.

\subsection*{Manifold calculus}\index{manifold calculus}
Manifold calculus was initially developed in~\cite{weiss:1999, goodwillie/weiss:1999}, see also~\cite{boavida/weiss:2013}. It concerns {\it contravariant} functors on manifolds. Of all the brands of calculus, this one is closest to classical sheaf theory, and therefore may look the most familiar.

Suppose $M$ is an $m$-manifold. Let $F$ be a presheaf of spaces on $M$. In other words, $F$ is a contravariant functor $F\colon \mathcal O(M)\to\mathscr{T}op$, where $\mathcal{O}(M)$ is the poset of open subsets of $M$.\footnote{For concreteness we focus on presheaves in $\mathscr{T}op$, but it seems likely that the theory can be extended without difficulty to presheaves in a general $(\infty, 1)$-category.} We assume that $F$ takes isotopy equivalences to homotopy equivalences, and filtered colimits to inverse limits. The motivating example is the presheaf $U\mapsto \operatorname{Emb}(U, N)$, where $N$ is a fixed manifold, and $\operatorname{Emb}(-, -)$ denotes the space of smooth embeddings.

The notion of excisive functor in  manifold calculus is analogous to that in Goodwillie's homotopy calculus. We say that a presheaf is \emph{$n$-excisive}\index{excisive functor} if it takes strongly cocartesian $(n+1)$-cubes in $\mathcal{O}(M)^{op}$ to cartesian cubes in $\mathscr{T}op$. For example, $F$ is 1-excisive if for any open sets $U, V\subset M$, the following diagram is a (homotopy) pull-back square
\[ \begin{diagram}
  \node{F(U\cup V)} \arrow{s} \arrow{e} \node{F(V)} \arrow{s} \\
  \node{F(U)} \arrow{e} \node{F(U\cap V)}
\end{diagram} \]
Thus we see that 1-excisive presheaves are, essentially, sheaves, except that the sheaf condition has to be interpreted in a homotopy invariant way. Sometimes 1-excisive functors are called homotopy sheaves.

Similarly, $n$-excisive functors can be interpreted as homotopy sheaves with respect to a different Grothendieck topology, where one says that $V_1, \ldots, V_k$ cover $U$ if $U^n=V_1^n\cup\cdots\cup V_k^n$.

Just as in Goodwillie calculus, the inclusion of the category of $n$-excisive presheaves into the category of all presheaves has a left adjoint. The adjoint is constructed by a process of sheafification, rather than stabilization. There is another, slightly different procedure for constructing the approximation that is often useful. We will describe it now.

\begin{definition}
For an $m$-manifold $U$, Let $\mathcal O_n(U)\subset \mathcal O(U)$ be the poset of open subsets of $U$ that are diffeomorphic to a disjoint union of at most $n$ copies of $\mathbb R^m$. Given a presheaf $F$ on $M$, define a new presheaf $T_nF$ by the formula
\[
T_nF(U)=\operatorname{holim}_{V\in \mathcal O_n(U)} F(V)
\]
\end{definition}
\begin{theorem}[Weiss~\cite{weiss:1999}]
Assume that $F$ takes isotopy equivalences to homotopy equivalences and filtered colimits to inverse homotopy limits. Then $T_nF$ is $n$-excisive, and the natural transformation $F\to T_nF$ is initial among all natural transformations from $F$ to an $n$-excisive functor.
\end{theorem}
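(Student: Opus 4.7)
The plan is to interpret $T_n$ as (the homotopy version of) the right Kan extension along the full inclusion $j\colon \mathcal O_n(M)\hookrightarrow \mathcal O(M)$, and to derive both assertions from this reflection-type adjunction.

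For the first claim, fix a strongly cocartesian $(n+1)$-cube $\mathcal X\colon \mathcal P(I)\to \mathcal O(M)^{op}$. Such a cube can be described concretely as $\mathcal X(S) = W\cup\bigsqcup_{i\notin S}A_i$ for an open set $W$ and pairwise disjoint open pieces $A_0,\dots,A_n$ (disjoint from $W$). Unfolding the definition of $T_nF$ on each vertex and commuting the two homotopy limits by Fubini, cartesianness of $T_nF\circ\mathcal X$ reduces to cofinality of the projection
\[
\pi\colon \bigl\{(S,V)\,:\,\emptyset\neq S\subseteq I,\ V\in\mathcal O_n(\mathcal X(S))\bigr\} \longrightarrow \mathcal O_n(\mathcal X(\emptyset)).
\]
The combinatorial heart is a pigeonhole: any $V\in\mathcal O_n(\mathcal X(\emptyset))$ has at most $n$ connected components, and each of them lies in exactly one of the $n+2$ disjoint pieces $W,A_0,\dots,A_n$. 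Since there are $n+1$ of the $A_i$, at least one is disjoint from $V$, so $V\subseteq\mathcal X(\{i\})$. The fibre of $\pi$ over $V$ is then the poset of nonempty subsets of the nonempty finite set $\{i\in I : V\subseteq\mathcal X(\{i\})\}$, whose nerve is contractible; this gives cofinality and hence excision.

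For the universal property, suppose $G$ is $n$-excisive and $F\to G$ is any natural transformation. The plan is to reduce to the assertion that every $n$-excisive $G$ is $T_n$-local, i.e.\ $G\to T_nG$ is an equivalence. Granting this, the required factorisation is the composite $F\to T_nF\to T_nG\simeq G$, and its essential uniqueness is forced by the universal property of the homotopy limit defining $T_nF$. The reduction to $G\simeq T_nG$ uses that $T_nG$ is itself $n$-excisive (by the first assertion applied to $G$) and that $G\to T_nG$ is automatically an equivalence on $V\in\mathcal O_n(M)$ because $V$ is terminal in $\mathcal O_n(V)$. It therefore suffices to show that two $n$-excisive functors agreeing on $\mathcal O_n(M)$ agree on every open subset $U\subseteq M$. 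For this one would induct on a handle complexity of $U$: any $U\notin\mathcal O_n(M)$ fits into a strongly cocartesian $(n+1)$-cube obtained by excising $n+1$ disjoint coordinate disks inside $U$, and $n$-excision applied to both functors reduces the comparison to strictly simpler open sets; the hypothesis that $F$ sends filtered colimits to inverse homotopy limits then extends the conclusion from compact exhaustions to arbitrary open sets.

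The main obstacle I expect is precisely this propagation step. Defining a well-founded handle complexity on $\mathcal O(M)$ that strictly decreases under the cubes produced by excising disks, and ensuring that the induction is compatible with the isotopy-invariance and filtered-colimit hypotheses, is the real manifold-theoretic content of the theorem. The pigeonhole argument driving the first assertion is essentially formal; the genuine work lies in reducing an arbitrary open $U\subseteq M$ to pieces in $\mathcal O_n(M)$ via a controlled sequence of strongly cocartesian cubes.
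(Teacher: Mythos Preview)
The paper is a survey and does not itself prove this theorem; it simply records the statement and attributes it to Weiss. There is therefore no proof in the paper to compare against. That said, your outline is essentially Weiss's original argument, and it is basically correct.

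For the first claim, the pigeonhole is exactly the combinatorial heart of Weiss's proof. One caution: a general strongly cocartesian $(n+1)$-cube in $\mathcal O(M)^{op}$ has the form $\mathcal X(S)=\bigcap_{i\in S}U_i$ for arbitrary open $U_i\subseteq U=\mathcal X(\emptyset)$, and need not have the disjoint-pieces description you wrote down; your pigeonhole, as stated, uses that disjointness. Weiss's own definition of ``polynomial of degree $\le n$'' is phrased precisely in terms of cubes obtained by deleting $n+1$ pairwise disjoint closed subsets, so your argument matches his definition, but a sentence is owed explaining why testing on such cubes suffices for the survey's ``strongly cocartesian in $\mathcal O(M)^{op}$'' formulation. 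With that adjustment the cofinality reduction is sound, modulo the usual care in identifying the relevant comma category (not just the strict fibre) as the nonempty-subsets poset.

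For the universal property, you have correctly located the real content. Weiss's proof that an $n$-excisive $G$ satisfies $G\simeq T_nG$ does proceed by handle induction: open sets admitting a finite handle decomposition are handled by inducting on the number of handles, each additional handle producing exactly the kind of cube you describe, and one then passes to arbitrary open sets via the filtered-colimit hypothesis. Your assessment that this propagation step is the genuine manifold-theoretic input, while the rest is formal, is accurate.
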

Thus $T_nF$ is the universal $n$-excisive approximation of $F$. As in homotopy calculus, there are natural transformations $T_nF\to T_{n-1}F$. So the approximations fit into a ``tower'' of functors under $F$
\[ F \to \cdots \to T_nF \to T_{n-1}F \to \cdots \]
sometimes called the \emph{embedding tower}\index{embedding tower} when $F$ is $\operatorname{Emb}(-,N)$ for a manifold $N$.

Continuing the analogy with homotopy calculus, there is a classification theorem for homogeneous functors that can be used to describe the layers in the tower of approximations. Unlike in homotopy calculus, a homogeneous functor is not classified by a spectrum, but by a fibration over a space of configurations of points in $M$. We will give below a description of the homotopy fibre of the map $T_nF\to T_{n-1}F$ (for space-valued $F$).

Let ${M \choose n}$ be the space of unordered $n$-tuples of pairwise distinct points of $M$. Given a point ${\mathbf x}=[x_1, \ldots, x_n]$ of ${M \choose n}$, let $U_{\mathbf x}$ be a tubular neighborhood of $\{x_1, \ldots, x_n\}$ in $M$. In particular, $U_{\mathbf  x}=\coprod_{i=1}^n U_i$ is diffeomorphic to a disjoint union of $n$ copies of $\mathbb R^n$. We have an $n$-dimensional cubical diagram, indexed by subsets of $\{1,\ldots, n\}$ and opposites of inclusions, that sends a subset $S\subset \{1,\ldots, n\}$ to $F(\coprod_{i\in S} U_i)$.
Let $\widehat F(U_{\mathbf  x})$ be the total homotopy fibre of this diagram. One can naturally construct a fibred space over ${M \choose n}$ where the fibre at ${\mathbf x}$ is equivalent to $\widehat F(U_{\mathbf  x})$.
\begin{theorem}[Weiss \cite{weiss:1999}]\label{theorem: layer}
A choice of point in $T_{n-1}F(M)$ gives a germ of a section of this fibration near the fat diagonal. The homotopy fibre of $T_nF(M)\to T_{n-1}F(M)$ is equivalent to the space of sections of the fibration, that agree near the fat diagonal with the local section prescribed by the choice of point in $T_{n-1}F(M)$.
\end{theorem}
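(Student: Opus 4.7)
The plan is to identify the layer $\operatorname{hofib}(T_nF(M)\to T_{n-1}F(M))$ by a direct analysis of the restriction map of homotopy limits $\operatorname{holim}_{\mathcal O_n(M)} F \to \operatorname{holim}_{\mathcal O_{n-1}(M)} F$. The objects of $\mathcal O_n(M)\setminus \mathcal O_{n-1}(M)$ are precisely the disjoint unions $V=U_1\sqcup\cdots\sqcup U_n$ of exactly $n$ open discs, and each such $V$ determines an unordered configuration of $n$ centres in $M$, so one expects the fibre to be controlled by data parametrised by ${M\choose n}$.

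First I would use a homotopy Kan extension / cofinality argument to identify the homotopy fibre of this restriction with a homotopy limit taken over only the ``new'' objects $V\in\mathcal O_n^{=n}(M)$, of the total homotopy fibre of the $n$-cube $S\mapsto F(\coprod_{i\in S}U_i)$. This total fibre is exactly $\widehat F(V)$ in the notation of the statement, and the point is that every proper face $S\subsetneq\{1,\dots,n\}$ of the cube already lies in $\mathcal O_{n-1}(V)$, so restricting from $\mathcal O_n$ to $\mathcal O_{n-1}$ removes precisely the contribution of the non-terminal vertices of each cube.

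Next I would package the resulting homotopy limit as a section space. The forgetful map $\mathcal O_n^{=n}(M)\to {M\choose n}$ sending $V$ to its configuration of centres becomes, after passing to germs of tubular neighbourhoods, a homotopy model for ${M\choose n}$; the hypothesis that $F$ preserves isotopy equivalences and turns filtered colimits into inverse limits is used to check that $\widehat F(V)$ is homotopy-invariant enough to descend. This produces a fibration $p\colon \widehat F_M\to{M\choose n}$ together with an identification
\[
\operatorname{holim}_{V\in\mathcal O_n^{=n}(M)} \widehat F(V)\;\simeq\;\Gamma(p).
\]
To incorporate the boundary condition, I would observe that a point $s\in T_{n-1}F(M)$ assigns compatible values to every open set with at most $n-1$ components, so by allowing points of a configuration to collide (so that the tubular neighbourhoods merge into fewer components) it determines a coherent germ of a section of $p$ in a neighbourhood of the fat diagonal in ${M\choose n}$. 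The homotopy fibre of $T_nF(M)\to T_{n-1}F(M)$ over $s$ is then the space of sections of $p$ extending this germ, which is the claim.

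The main obstacle is the homotopy-cofinality step converting a homotopy limit indexed by the discrete poset $\mathcal O_n^{=n}(M)$ into the space of sections of a genuine fibration over the topological configuration space ${M\choose n}$. This requires replacing $\mathcal O_n^{=n}(M)$ by a topologically enriched version whose classifying space models ${M\choose n}$, and verifying that $\widehat F$ is isotopy-invariant and germ-local in a strong enough sense for the descent along $\mathcal O_n^{=n}(M)\to{M\choose n}$ to yield a well-defined fibration; this is the technical heart of Weiss's paper~\cite{weiss:1999}. Once this is in place, the identification of the layer with the space of sections matching the prescribed germ near the fat diagonal is formal.
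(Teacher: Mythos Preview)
The paper does not actually contain a proof of this theorem; it is a survey article that simply states the result and attributes it to Weiss~\cite{weiss:1999}. So there is no ``paper's own proof'' to compare against.

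That said, your outline is essentially the strategy Weiss uses in the original source: one analyses the restriction $\operatorname{holim}_{\mathcal O_n(M)}F\to\operatorname{holim}_{\mathcal O_{n-1}(M)}F$, observes that the new objects are disjoint unions of exactly $n$ balls, identifies the local contribution at each such $V$ with the total homotopy fibre $\widehat F(V)$ of the evident $n$-cube, and then reinterprets the resulting homotopy limit as a space of sections over ${M\choose n}$ with a prescribed germ near the fat diagonal coming from the chosen point in $T_{n-1}F(M)$. You have also correctly located the genuine technical difficulty: passing from the discrete poset $\mathcal O_n^{=n}(M)$ to the topological space ${M\choose n}$ and showing that $\widehat F$ descends to a fibration there. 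Weiss handles this not by a cofinality argument per se but by first proving a classification of homogeneous functors of degree $n$ in terms of fibrations over ${M\choose n}$, and then identifying the layer $T_nF/T_{n-1}F$ as such a homogeneous functor; the two routes ultimately amount to the same thing.

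One small caution: your phrase ``homotopy Kan extension / cofinality argument'' in the first step is doing a lot of work. The homotopy fibre of a restriction of homotopy limits along an inclusion of indexing categories is not in general just a homotopy limit over the complement; you need the specific structure here (that every proper subunion of an object in $\mathcal O_n^{=n}$ lies in $\mathcal O_{n-1}$, and that $F$ is already determined on $\mathcal O_{n-1}$ by the basepoint) to make this identification go through. You flag this implicitly, but it is worth being explicit that this is a nontrivial step, not a formal one.
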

\begin{remark}
Suppose $f\colon \mathbb R \to \mathbb R$ is a function. For each $n\ge 0$, let $t_nf$ be the unique polynomial of degree $n$ for which $f(i)=t_nf(i)$ for $i=0, 1, \ldots, n$. There is a well known formula for $t_nf$. Namely $t_nf(m)=\sum_{i=0}^n \hat f(i)\cdot {m \choose i}$. Here $\hat f(i)=\sum_{j=0}^i(-1)^{j}f(i-j)$ is the $i$-th cross-effect of $f$. In particular, the $n$-th term of $t_nf(m)$ is $\hat f(n)\cdot {m \choose n}$. Note the formal similarity of this formula with the formula for the $n$-th layer in the manifold calculus tower in Theorem~\ref{theorem: layer}. This suggests that the construction $T_nF$ of manifold calculus is analogous to the interpolating polynomial $t_nf$ rather than to the Taylor polynomial of $F$. In fact, it is true that the map $F\to T_nF$ is characterized by  the property that it is an equivalence on objects of $\mathcal O_n(M)$. This confirms the intuition that the approximations in manifold calculus are analogous to interpolation polynomials.
\end{remark}

Just as in Goodwillie calculus, the question of convergence is important, and there is a general theory of analytic functors, for which the tower of approximations converges strongly. The main result on the subject is the following deep theorem of Goodwillie and Klein. Let $\operatorname{Emb}$ denote the space of smooth embeddings.
\begin{theorem}[\cite{goodwillie/klein:2015}]
Let $M^m$, $N^d$ be smooth manifolds. Consider the presheaf on $M$ given by the formula $U\mapsto \operatorname{Emb}(U, N)$. The map $\operatorname{Emb}(M, N)\to T_n\operatorname{Emb}(M, N)$ is $(d-m-2)n-m+1$-connected.
\end{theorem}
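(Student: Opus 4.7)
The plan is to view the map $\operatorname{Emb}(M,N) \to T_n\operatorname{Emb}(M,N)$ as measuring the failure of the presheaf $\operatorname{Emb}(-,N)$ to be $n$-excisive, and to bound that failure by a \emph{multiple disjunction} estimate with connectivity growing linearly in $n$ with slope $d-m-2$. This is the manifold-calculus analogue of Goodwillie's notion of $\rho$-analyticity in the homotopy calculus.

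First I would perform a global-to-local reduction. Build $M$ by attaching handles of index $\leq m$ starting from elements of $\mathcal{O}_n(M)$, where the map $\operatorname{Emb}(-,N) \to T_n\operatorname{Emb}(-,N)$ is an equivalence by construction. Each handle attachment, together with subsequent disjoint handles of the same index, presents the open set being built as the initial vertex of a strongly cocartesian $(n+1)$-cube in $\mathcal{O}(-)^{op}$ whose other vertices lie in open sets for which the statement is already known inductively. A standard cube-of-cubes argument then reduces the global connectivity bound to the following local claim: for any smooth manifold $V$ of dimension $m$ and pairwise disjoint compact framed handles $A_0,\dots,A_n \subset V$, the $(n+1)$-cube $S \mapsto \operatorname{Emb}(V\setminus\bigcup_{i\in S} A_i, N)$ is at least $\bigl((d-m-2)(n+1)+c_m\bigr)$-cartesian, where $c_m$ depends only on $m$.

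The heart of the proof is this multiple disjunction theorem, which I would establish by induction on $n$. The base $n=0$ is vacuous, and the inductive step proceeds by fixing $A_0$ and applying the inductive hypothesis on the $n$-cube indexed by subsets of $\{1,\dots,n\}$ both to embeddings of $V$ and to embeddings of $V\setminus A_0$; the resulting $(n+1)$-cube decomposes as a map between two $n$-cubes, and comparing their connectivities reduces the problem to a single disjunction estimate. That estimate is the classical input: the restriction map
\[ \operatorname{Emb}(P\sqcup Q, N) \longrightarrow \operatorname{Emb}(P,N) \times \operatorname{Emb}(Q,N) \]
is $(d-\dim P-\dim Q-1)$-connected for $P,Q$ of handle dimension $\leq m$, a theorem of Haefliger, Hudson, and Dax proved by surgery-theoretic deformation of intersections. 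Iterating the $n$-fold disjunction along a handle filtration of each $A_i$ converts this single estimate into the desired linear-slope multiple disjunction bound.

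The main obstacle will be bookkeeping: one must track connectivities through a double induction (on $n$ and on the handle dimension) and verify that the slope $d-m-2$ is preserved at every stage -- each elementary disjunction contributes codimension $d$ minus two handle dimensions, and it is a nontrivial combinatorial check that these contributions sum correctly across an $(n+1)$-cube to give $(d-m-2)(n+1)$ up to a constant. A subsidiary but genuine difficulty is the low-codimension regime near $d-m=2$, where the estimate degenerates and the induction must be set up so that the inductive hypothesis can be invoked even when the connectivity bound is negative. Once these bounds are in hand, the stated connectivity $(d-m-2)n-m+1$ emerges after converting the cartesian-cube estimate into a connectivity estimate for the map into the homotopy limit defining $T_n\operatorname{Emb}(M,N)$ and accounting for the handle filtration of $M$.
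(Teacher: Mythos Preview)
The paper does not prove this theorem; it quotes it from \cite{goodwillie/klein:2015} and explicitly calls it a ``deep theorem of Goodwillie and Klein'', with no argument given. So there is no proof in the paper to compare against.

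That said, your outline correctly identifies the architecture of the actual Goodwillie--Klein proof: one reduces convergence of the embedding tower to an analyticity statement for the presheaf $\operatorname{Emb}(-,N)$, and that analyticity statement is precisely a multiple disjunction estimate for cubes of embedding spaces formed by deleting pairwise disjoint handles. The handle-induction reduction from the global statement to the local cube estimate is also standard and correct (this is the Goodwillie--Weiss part of the story).

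The genuine gap is in your proposed proof of the multiple disjunction estimate itself. You claim the inductive step ``reduces the problem to a single disjunction estimate'' due to Haefliger, Hudson, and Dax. This step does not go through. When you split the $(n+1)$-cube as a map of $n$-cubes and pass to fibres in the $A_0$-direction, you obtain an $n$-cube of embedding spaces of $A_0$ into \emph{varying} complements in $N$, and bounding the cartesianness of that cube is again a multiple disjunction problem of the same order, not a single one. Unlike higher Blakers--Massey for spaces (which does bootstrap from the square case), the multiple disjunction theorem for smooth embeddings is not known to follow from the classical single-disjunction results by any such direct induction. This is exactly why the result is described as deep and why its proof appeared long after its announcement: the Goodwillie--Klein argument passes through a comparison with Poincar\'e embeddings (and a prior multiple disjunction theorem for those, proved by homotopy-theoretic analysis of complements), together with concordance-theoretic input relating smooth, block, and Poincar\'e embeddings. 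Your sketch does not account for this machinery, and the ``bookkeeping'' you anticipate as the main obstacle is not where the difficulty lies---the missing idea is the detour through Poincar\'e embedding theory.
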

The theorem says in particular that if $d-m\ge 3$, then the connectivity of the map goes to infinity linearly in $n$, and the tower converges strongly. Even in situations where there is no strong convergence, the tower may be useful for constructing invariants of embeddings and obstructions to existence of embeddings. For example, in the case of $m=1$, $d=3$, there is a close connection between the manifold tower and finite type knot invariants~\cite{volic:2006}.

It is often possible to express the tower $T_kF$ in terms of mapping spaces between modules over the little discs operad. This idea probably first appeared in~\cite{sinha:2006} in the context of spaces of knots, and then was developed in~\cite{arone/lambrechts/volic:2007, arone/turchin:2014, turchin:2013}, and finally and definitively in~\cite{boavida/weiss:2013}. Thanks to this connection, facts about the homotopy of the little disc operads (most notably Kontsevich's formality theorem) have consequences regarding the homotopy type of spaces of embeddings~\cite{arone/lambrechts/volic:2007, arone/lambrechts/turchin/volic:2008, arone/turchin:2015}.

Recently manifold calculus was used by Michael Weiss in the course of proving some striking results about Pontryagin classes of topological bundles~\cite{weiss:2015}.

\subsection*{Orthogonal calculus}\index{orthogonal calculus} Let $\mathcal{J}$ be the category of finite-dimensional Euclidean spaces and linear isometric inclusions. Orthogonal calculus~\cite{weiss:1995} concerns continuous functors from $\mathcal{J}$ to the category of topological spaces. It seems likely that topological spaces can be replaced in a routine way with a more general topologically enriched category. The paper~\cite{barnes/oman:2013} develops orthogonal calculus using the perspective of Quillen model categories. Formally, orthogonal calculus is more similar to homotopy calculus than manifold calculus is, and the paper~\cite{barnes/eldred:2016} explores this similarity in some depth. Orthogonal calculus has probably received the least attention of the three main brands, and we believe it is ripe for exploration.

Let $V$ denote a generic Euclidean space. Here are some examples of functors to which one might profitably apply orthogonal calculus: $V\mapsto BO(V)$, $V\mapsto G(V)$ (the space of homotopy self equivalences of the unit sphere of $V$), $V\mapsto BTop(V)$, $V\mapsto \operatorname{Emb}(M, N\times V)$, etc.

Such functors tend to be easier to understand when evaluated at high dimensional vector spaces. The rough idea of orthogonal calculus is to analyze the behavior of a functor on high-dimensional vector spaces and then to extrapolate to low-dimensions. As a result one obtains a kind of Taylor expansion at infinity. Thus to a functor $F\colon \mathcal{J} \to \mathscr{T}op$ one associates a tower of functors under $F$
\[
F\to \cdots  \to P_nF \to P_{n-1}F \to \cdots \to P_0F
\]
where $P_nF$ plays the role of the Taylor tower of $F$ at infinity. In particular, the constant term $P_0F$ is equivalent to $\operatorname{colim}_{n\to \infty} F(\mathbb R^n)$. The higher Taylor approximations are usually difficult to describe explicitly, 
but the layers in the tower can be described in terms of certain spectra that play the role of derivatives. This is similar to homotopy calculus, but unlike in homotopy calculus, the sequence of derivatives of a functor in orthogonal calculus is an {\it orthogonal} sequence of spectra, rather than a symmetric one. This means that the $n$-th derivative is a spectrum with an action of $O(n)$. The following theorem summarizes some results from~\cite{weiss:1995} about homogeneous functors and layers in orthogonal calculus. Compare with Theorem~\ref{thm:hom} and Example~\ref{example:hom}
\begin{theorem}
To a continuous functor $F\colon \mathcal{J} \to \mathscr{T}op$ one can associate an orthogonal sequence of spectra, $\partial_1 F, \ldots, \partial_n F, \ldots$. The homotopy fibre of the map $P_nF(V)\to P_{n-1}F(V)$ is naturally equivalent to the functor that sends $V$ to $\Omega^\infty\left( \partial_n F \wedge S^{nV}\right)_{hO(n)}$. Here $S^{nV}$ is the one-point compactification of the vector space $nV$.
\end{theorem}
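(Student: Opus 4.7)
The plan is to adapt to the orthogonal setting the strategy used to prove the analogous classification of homogeneous functors in homotopy calculus (Theorem~\ref{thm:hom}). The first step is to construct the derivative spectra $\partial_n F$. Given $V\in\mathcal{J}$, consider the $n$-cube $S \mapsto F(V\oplus \mathbb{R}^S)$ indexed by subsets $S\subseteq\{1,\dots,n\}$, with structural maps induced by the inclusions of Euclidean spaces. Define $F^{(n)}(V)$ to be the total homotopy fibre of this cube; because the $n$ summands of $\mathbb{R}^n$ are permuted by $\Sigma_n$ and one can extend this to an action of $O(n)$ via the continuity of $F$ on the morphism spaces of $\mathcal{J}$, the space $F^{(n)}(V)$ carries a natural $O(n)$-action. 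The assignment $V\mapsto F^{(n)}(V)$ assembles into an $O(n)$-spectrum: the structure maps $S^U\wedge F^{(n)}(V)\to F^{(n)}(V\oplus U)$ come from the functoriality of $F$ along inclusions $V\hookrightarrow V\oplus U$ together with the fact that the total homotopy fibre construction raises connectivity. Call the resulting $O(n)$-spectrum $\partial_n F$.

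Next I would verify that the layer $D_nF := \operatorname{hofib}(P_nF \to P_{n-1}F)$ is itself $n$-homogeneous in the sense appropriate to orthogonal calculus: it is $n$-polynomial and $P_{n-1}D_nF \simeq *$. Both assertions are formal consequences of the defining fibre sequence and the universal property of $P_n$, so no new work is needed here. This reduces the theorem to a classification result: any $n$-homogeneous continuous functor $E : \mathcal{J}\to \mathscr{T}op$ is naturally equivalent to $V \mapsto \Omega^\infty(\partial_n E \wedge S^{nV})_{hO(n)}$.

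To prove this classification, I would construct a natural transformation
\[ \Omega^\infty(\partial_n E \wedge S^{nV})_{hO(n)} \longrightarrow E(V) \]
and then show it is an equivalence. A convenient way to produce this map is to observe that the functor $V \mapsto S^{nV}$ is itself $n$-homogeneous with derivative equivalent to $S^0$ carrying the standard $O(n)$-action, so it plays the role of the ``universal'' $n$-homogeneous functor; smashing with $\partial_n E$, taking homotopy $O(n)$-orbits, and applying $\Omega^\infty$ produces an $n$-homogeneous functor whose derivative is $\partial_n E$ by direct computation. One then either constructs the comparison map by an adjunction between $n$-homogeneous functors and $O(n)$-spectra, or inductively by comparing the two sides after applying $F^{(n)}$ and invoking the fact that an $n$-homogeneous functor is determined by its $n$-th derivative.

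The main obstacle is this last classification step, and within it the justification of the factor $S^{nV}$. Unlike in homotopy calculus, where the universal $n$-homogeneous functor is $X\mapsto (X^{\wedge n})_{h\Sigma_n}$, here the role is played by $V\mapsto S^{nV}$, and the appearance of the one-point compactification of $V^{\oplus n}$ must be traced back to the fact that the natural ``suspension'' in orthogonal calculus adjoins a copy of $\mathbb{R}$ to $V$ and that $n$-homogeneity forces $n$ such suspensions to be ``inverted''. Carrying this out rigorously, as Weiss does in \cite{weiss:1995}, requires a careful analysis of the category of continuous functors, of the Quillen-type adjunction between $n$-homogeneous functors and $O(n)$-spectra, and of the behaviour of $F^{(n)}$ on the universal example; this is where the bulk of the technical work resides.
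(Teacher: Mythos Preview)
The paper does not actually prove this theorem; it is stated there only as a summary of results from Weiss~\cite{weiss:1995}, with no argument supplied. So there is no ``paper's own proof'' to compare against, and your proposal should be read as an attempt to sketch Weiss's original argument.

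As such a sketch, your outline is broadly on the right track, but there is a genuine error in the construction of the spectrum structure on $F^{(n)}$. You write the structure maps as $S^{U}\wedge F^{(n)}(V)\to F^{(n)}(V\oplus U)$; the correct form is
\[
S^{nU}\wedge F^{(n)}(V)\longrightarrow F^{(n)}(V\oplus U),
\]
i.e.\ adjoining $U$ to the argument deloops $F^{(n)}$ by $n\cdot\dim U$, not by $\dim U$. This $n$-fold speed is precisely what produces the factor $S^{nV}$ in the classification of $n$-homogeneous functors, so getting it wrong undermines the very point you flag as the ``main obstacle''. Moreover, these structure maps do not arise merely from ``functoriality of $F$ along inclusions'' and a connectivity-raising property of total fibres; in Weiss's treatment they come from an explicit cofibre sequence relating the morphism spaces of the enriched categories $\mathcal{J}_{n-1}$ and $\mathcal{J}_n$ (equivalently, from a filtration of $\mathrm{mor}_{\mathcal{J}}(V,V\oplus\mathbb{R})$), and the $O(n)$-action is built in through that enriched structure rather than by simply extending a $\Sigma_n$-permutation action. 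The rest of your plan---reducing to a classification of $n$-homogeneous functors and establishing an equivalence with $O(n)$-spectra---matches Weiss's strategy, but the adjunction you invoke at the end is exactly the substantial content of~\cite{weiss:1995}, so the sketch as written defers essentially all of the real work to that reference.
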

\begin{examples}
Recall that $\mathcal J$ is the category of Euclidean spaces and linear isometric inclusions. Suppose $V_0, V$ are Euclidean spaces. Let $\operatorname{Hom}_{\mathcal J}(V_0, V)$ denote the space of linear isometric inclusions from $V_0$ to $V$. Now fix $V_0$ and consider the functor $\Omega^\infty \Sigma^\infty \operatorname{Hom}_{\mathcal J} (V_0, -)_+$. The Taylor tower of this functor was described in~\cite{arone:2002}. This is a rare example of a functor whose Taylor tower can be described rather explicitly. When $\dim(V_0)\le \dim(V)$, the Taylor tower splits as a product of its layers, and in fact this splitting is equivalent to a classical stable splitting of Stiefel manifolds discovered by Haynes Miller~\cite{miller:1985, arone:2001}.

For another example, consider the functor $V\mapsto BO(V)$. The $n$-th derivative of this functor can be described in terms of the complex of direct-sum decompositions of $\mathbb R^n$. A (proper) direct-sum decomposition is a collection (at least two) of pairwise-orthogonal non-zero subspaces of $\mathbb R^n$ whose direct sum of $\mathbb R^n$. We can partially order such decompositions by refinement and obtain a topological poset whose realization we denote by $L_n$. Let $L_n^\diamond$ be the unreduced suspension of that realization. The space $L_n^\diamond$ is analogous to the complex of partitions $T_n$ from definition~\ref{def:tn}. The $n$-th derivative of the functor $BO(V)$ is equivalent to $\mathbb{D}(L_n^\diamond)\wedge S^{ad_n}$, where $S^{ad_n}$ is the one-point compactification of the adjoint representation of $O(n)$. Compare with Theorem~\ref{theorem: brenda} about the Goodwillie derivatives of the identity functor.  The space $L_n^\diamond$, and even more so its complex analogue, has many striking properties. It appeared in several recent works in an interesting way. For example, it was used in the study of the stable rank filtration of complex $K$-theory~\cite{arone/lesh:2007,arone/lesh:2010}, and in the study of the Balmer spectrum of the equivariant stable homotopy category~\cite{barthel/hausmann/naumann/nikolaus/noel/stapleton:2017}.
\end{examples}

It would be especially interesting to understand the derivatives of the functor $V\mapsto \operatorname{Emb}(M, N\times V)$. We speculate that the $n$-th derivative of this functor can be expressed in terms of moduli space of connected graphs with points marked in $M$, for which the quotient space of the graph by the subset of marked points is homotopy equivalent to a wedge of $n$ circles.

\section{Further Directions}

The basic concepts of Goodwillie calculus are very general and can be applied to a wide variety of homotopy-theoretic settings. Nonetheless, not many calculations have been done outside of representable functors, the identity functor and algebraic $K$-theory. We have seen that the Taylor tower of the identity functor plays a key role in the theory, so a calculation of that Taylor tower, or at least its layers, would be valuable in other contexts. Obvious candidates for further exploration include:
\begin{itemize}
  \item unpointed and parameterized homotopy theory: that is, a more detailed understanding of Taylor towers at base objects other than the one-point space;
  \item unstable equivariant homotopy theory: presumably the derivatives of the identity here are some equivariant version of the spectral Lie operad;
  \item unstable motivic homotopy theory: as far as we know, no work has been done in this direction despite the wealth of structure the Taylor tower of the identity should possess in this case.
\end{itemize}
In each of these cases calculations of the derivatives of the identity, their operad structure and Heuts's Tate diagonals, could reveal something deep about how unstable information is built from stable.

The equivariant setting is potentially very interesting. Dotto~\cite{dotto:2017} has generalized Goodwillie calculus to a $G$-equivariant context, for a finite group $G$, in which the Taylor tower of a functor is replaced by a diagram of approximations indexed by the finite $G$-sets. This is based on an idea of Blumberg~\cite{blumberg:2006} for a $G$-equivariant version of excision.

Dotto's approach seems to be better able to make use of the power of modern equivariant stable homotopy theory, something rather neglected by Goodwillie's original version. In particular, the derivatives of a functor in this setting are genuine $G$-spectra. Dotto calculates the derivatives of the identity functor on pointed $G$-spaces as equivariant Spanier-Whitehead duals of the partition poset complexes of \cite{arone/mahowald:1999}. It seems reasonable to expect that much of the theory described in this article could be extended the equivariant setting, but little has been done yet.

Manifold and orthogonal calculus have been applied to problems in geometric topology, but there surely is much that remains to be done. In particular orthogonal calculus should have a lot of potential that has barely began to be tapped. We mentioned above that it would be interesting to apply orthogonal calculus to the study of embedding spaces by considering the functor $V\mapsto \operatorname{Emb}(M, N\times V)$. An even more interesting and challenging example is given by the functor $V\mapsto \mbox{BTop}(V)$ and the closely related functor  $V\mapsto B^{V+1} \mbox{Diff}_c(V)$. Here $B^{V+1}$ stands for $V+1$-fold delooping, and Diff$_c(V)$ stands for the space of diffeomorphisms of $V$ that equal the identity outside a compact set. We speculate that the $n$-th derivative of these functors is related to the moduli space of graphs homotopy equivalent to a wedge of $n$ circles (without marked points).

The close analogies that connect Goodwillie calculus with the manifold and orthogonal versions suggests the existence of a more encompassing framework that could also provide new instances of ``calculus'' for other kinds of functors. 
One such framework is the theory of tangent categories of Rosick\'{y}~\cite{rosicky:1984} and Cockett and Cruttwell~\cite{cockett/cruttwell:2014}, developed to axiomatize the structure of the category of manifolds and smooth maps. Lurie's notion of tangent bundle for an $\infty$-category~\cite[7.3.1]{lurie:2017} fits Goodwillie calculus into this story, and opens up a way to make precise ideas of Goodwillie on applying differential-geometric ideas such as connections and curvature directly to homotopy theory. There is a helpful intuition that stable $\infty$-categories such as spectra are ``flat'', whereas unstable worlds are ``curved''. However, manifold and orthogonal calculus do not appear to fit into this same picture, so perhaps an even more general theory is still waiting to be discovered.


\bibliographystyle{amsplain}
\bibliography{mcching}

\end{document}